\newcolumntype{C}[1]{>{\centering\arraybackslash$}p{#1}<{$}}
\newtheorem{theorem}{Theorem}[section]
\newtheorem{lemma}[theorem]{Lemma}
\newtheorem{prop}[theorem]{Proposition}
\newtheorem{definition}{Definition}
\newtheorem{example}{Example}
\newtheorem{remark}{Remark}
\setlist[enumerate]{leftmargin=.5in}
\setlist[itemize]{leftmargin=.5in}
\newcommand{\FF}{\mathbb{F}}
\newcommand{\RR}{\mathbb{R}}
\newcommand{\CC}{\mathbb{C}}
\newcommand{\NN}{\mathbb{N}}
\newcommand{\NNc}{\mathcal{N}}
\newcommand{\MM}{\mathbb{M}}
\newcommand{\Om}{\Omega}
\newcommand{\ds}{\displaystyle}
\newcommand{\p}{\partial}
\newcommand{\pd}[2]{\frac {\p #1}{\p #2}}
\newcommand{\eqnref}[1]{(\ref {#1})}
\renewcommand{\qed}{\hfill $\Box$ \medskip}
\newcommand{\beq}{\begin{equation}}
\newcommand{\eeq}{\end{equation}}
\newcommand{\be}{\begin{equation*}}
\newcommand{\ee}{\end{equation*}}
\newcommand{\ba}{\begin{align*}}
\newcommand{\ea}{\end{align*}}
\newcommand{\bal}{\begin{align}}
\newcommand{\eal}{\end{align}}
\def\ep{\varepsilon}
\def\ep{\varepsilon}
\newcommand{\Kcal}{{K}}
\newcommand{\la}{\langle}
\newcommand{\ra}{\rangle}
\newcommand{\arcsinh}{\operatorname{arcsinh}}
\title{Inverse problem for a planar conductivity inclusion\thanks{This study was supported by National Research Foundation of Korea (NRF) grant funded by the Korean government (MSIT) (NRF-2021R1A2C1011804) and by the Swedish Research Council under contract 2021-03720.}}
\author{Doosung Choi
\thanks{Department of Mathematics, Louisiana State University, Baton Rouge, LA 70808, USA.}$\,\,$\footnotemark[4]
\and
Johan Helsing\thanks{Centre for Mathematical Sciences, Lund University, Box 118, 221 00 Lund, Sweden ({johan.helsing@math.lth.se}).}
\and Sangwoo Kang\thanks{Department of Mathematical Sciences, Korea Advanced Institute of Science and Technology, Daejeon 34141, Republic of Korea ({7john@kaist.ac.kr}, {sangwoo.kang@kaist.ac.kr},  {mklim@kaist.ac.kr}).}
\and Mikyoung Lim\footnotemark[4]}
\date{\today}
\begin{document}

\maketitle

\begin{abstract}
This paper concerns the inverse problem of determining a planar conductivity inclusion. Our aim is to analytically recover from the generalized polarization tensors (GPTs), which can be obtained from exterior measurements, a homogeneous inclusion with arbitrary constant conductivity. The primary outcome of recovering a homogeneous inclusion is an inversion formula in terms of the GPTs for conformal mapping coefficients associated with the inclusion. To prove the formula, we establish matrix factorizations for the GPTs.
\end{abstract}

\noindent {\footnotesize {\bf AMS subject classifications.} {30C35,35J05,45P05} }

\noindent {\footnotesize {\bf Keywords.} 
{Inverse conductivity problem; Lipschitz domain; Conformal mapping; Generalized polarization tensor}
}

\section{Introduction}

The problem of determining electrical conductivity throughout a domain from boundary field measurements is of great interest which goes back many years \cite{Calderon:1980:IBV,Cannon:1967:DUC,Tikhonov:1949:USP,Tikhonov:1950:DEC}.
It has been extensively studied given its importance in real-life applications such as medical imaging and nondestructive testing. 
For instance, we refer to \cite{Brown:1997:UIC,Caro:2016:GUC,Druskin:1982:USI,Druskin:1998:UIP,Nachman:1996:GUT,Sylvester:1987:GUT} for the uniqueness results and  to \cite{Alessandrini:1990:SSE,Greenleaf:2003:CPC,Isaacson:2004:RCP,Knudsen:2004:RLR, Kohn:1984:DCB,Kohn:1985:DCB,Novikov:1988:MIS,Novikov:2009:EGR,Paivarinta:2003:CGO} for reconstruction methods. We also refer to \cite{Adler:2021:EIT,Borcea:2002:EIT,Uhlmann:2009:EIT} and the references therein for more results.
Despite the theoretical and numerical results achieved, developing analytic inversion formulas is challenging because of the consequential nature of the nonlinearity and complexity of the inverse problem.

As building blocks for the detection problem of a conductivity inclusion, one can use the so-called generalized polarization tensors (GPTs), which are complex-valued matrices that generalize the polarization tensor (PT) \cite{Ammari:2013:MSM, Polya:1951:IIM}. 
More precisely,  the GPTs are the coefficients of the multipole expansion for the potential function that is perturbed due to the existence of the inclusion. They can be obtained from multistatic measurements \cite{Ammari:2007:PMT}, where a high signal-to-noise ratio is required for high-order terms \cite{Ammari:2014:TIU}. 
Efficient algorithms have been developed to determine the location and shape of inhomogeneities from the GPTs \cite{Ammari:2013:TMT,Ammari:2018:MNF,Ammari:2013:MSM,Ammari:2014:GPT,Ammari:2019:RDA,Baldassari:2021:MCE,Choi:2018:CEP,Choi:2021:ASR} (see also \cite{Ammari:2003:PGP} for the uniqueness result and \cite{Ammari:2018:MNF,Choi:2018:GME,Feng:2018:CGV} for other applications).

For the case of a planar simply connected inclusion, which is the focus of this paper, analytic shape recovery algorithms have been developed based on the complex analytical formulation for the conductivity transmission problem. The conformal mapping coefficients were explicitly expressed by the GPTs under the assumption that the inclusion is perfectly conducting or insulating \cite{Choi:2018:CEP,Choi:2021:ASR,Kang:2015:CCM}.  However, only optimization approaches have been developed for an inclusion with general conductivity \cite{Ammari:2012:GPT, Choi:2021:ASR}. The instances with arbitrary finite conductivity pose a specific complication because, unlike the inclusion with extreme conductivity, neither the Dirichlet nor the Neumann boundary condition is explicitly given in advance. It remains an interesting and open problem to generalize the inversion formula for the conformal mapping coefficients of an inclusion with extreme conductivity to the case of the inclusion with arbitrary finite conductivity. The objective of this paper is to provide a solution to this problem.

As the main tool, we use the concept of the Faber polynomial polarization tensors (FPTs), that is, the linear combinations of the GPTs with expansion coefficients defined by the Faber polynomials \cite{Choi:2018:GME}. 
Indeed, for any simply connected domain in the complex plane, the Riemann mapping theorem assures the existence and uniqueness of the conformal mapping that transforms the exterior of a disk to the exterior of the domain.
Then, this exterior conformal mapping generates the Faber polynomials, which form a basis for complex analytic functions in the domain. Recently, the FPTs were successfully applied to the asymptotic shape recovery of a conductivity inclusion \cite{Choi:2021:ASR} and the properties of the PT \cite{Cherkaev:2021:GSE}. It is worth remarking that the layer potential operators associated with the domain admit matrix expressions with entries given by the Grunsky coefficients, which are expansion coefficients of the composition of the Faber polynomials and the exterior conformal mapping \cite{Jung:2019:DEE,Jung:2021:SEL}.

As our main contribution, we propose a new factorization method for recovering a planar conductivity inclusion with arbitrary constant conductivity  from the GPTs. For two semi-infinite matrices whose entries are scalar-valued complex contracted GPTs, we derive matrix factorization formulas in terms of the material parameter and conformal mapping coefficients associated with the inclusion. We rigorously prove that the formulas hold for either a smooth domain or a star-shaped domain with a Lipschitz boundary.
Then, through the cancellation of the common factors in the two matrix factorizations, we derive an explicit inversion formula for the conformal mapping coefficients in terms of the GPTs and the conductivity value of the inclusion. We also obtain a fixed-point equation from which one can numerically compute the conductivity value. In conclusion, one can analytically recover the shape of an inclusion that possibly has a Lipschitz boundary after determining the conductivity value by a fixed-point computation. 
Our approach generalizes the shape recovery formulas for an inclusion with extreme conductivity obtained in \cite{Choi:2018:CEP, Choi:2021:ASR, Kang:2015:CCM} to an inclusion with arbitrary finite conductivity. Also, it significantly improves the asymptotic results of \cite{Ammari:2014:GPT, Ammari:2012:GPT, Ammari:2018:IAD,Choi:2021:ASR} in that the resulting inversion formula of the shape recovery holds exactly rather than approximately.

We validate the proposed reconstruction approach with numerical experiments for inclusions of various shapes. To compute the GPTs, we solve a boundary integral equation involving the Neumann--Poincar\'{e} operator by using the Nystr\"{o}m discretization. For domains with corners in the numerical examples, we employ recursively compressed inverse preconditioning (RCIP) to compute the GPTs to a high degree of precision \cite{Helsing:2008:CSE}.

This paper is organized as follows. Section 2 describes the inverse problem of reconstructing a conductivity inclusion from exterior measurements and the concepts of the GPTs and FPTs. Section 3 is devoted to reviewing shape recovery methods in the previous literature. In Section 4, we establish matrix factorizations for the GPTs and derive an inversion formula. In Section 5, we extend the proposed approach to a Lipschitz domain. We then validate our method with numerical examples in Section 6. 
We conclude with Section 7.


\section{Preliminary}
\subsection{Problem formulation}
Let $\Om$ be a simply connected, bounded domain with a Lipschitz boundary in $\RR^2$. We assume that $\p \Om$ is a Jordan curve. We further assume that $\RR^2\setminus\overline{\Om}$ and $\Om$ have constant isotropic conductivities, respectively denoted by $\sigma_m$ and $\sigma_c$, satisfying $0<\sigma_c \neq \sigma_m < \infty$.  Set $\lambda = \frac{\sigma_c + \sigma_m}{2(\sigma_c - \sigma_m)}$ if not specified otherwise. Note that that $|\lambda|>\frac{1}{2}$. 
Consider the conductivity transmission problem:
\beq\label{transmission}
\begin{cases}
\ds \Delta u=0\qquad&\mbox{in } \RR^2 \setminus \p \Om, \\
\ds u\big|^+ = u\big|^- \qquad&\mbox{on }\p \Om,\\
\ds \sigma_m \pd{u}{\nu}\Big|^+ = \sigma_c  \pd{u}{\nu}\Big|^- \qquad&\mbox{on }\p \Om,\\
\ds (u-H)(x)  =O({|x|^{-1}})\qquad&\mbox{as } |x| \to \infty,
\end{cases}
\eeq
where $H$ is a given background potential that is entire harmonic. If there were no inclusion, the solution $u$ would be $H$.    The perturbation $u-H$ due to the inclusion depends on the geometry and material property of the inclusion and can be expressed in terms of layer potentials.

The Neumann--Poincar\'{e} (NP) operator for $\varphi\in L^2(\p \Om)$ is defined as
\begin{align*}
\ds &\Kcal_{\p\Om}^*[\varphi](x)=p.v.\,\frac{1}{2\pi}\int_{\partial \Om}\frac{\left\la x-y,\nu_x\right\ra}{|x-y|^2}\varphi(y)\, d\sigma(y),\quad x\in\p \Om,
\end{align*}
where $p.v.$ stands for the Cauchy principal value and $\nu_x$ is the outward unit normal vector to $\p \Om$ at $x$. The operator $\lambda I-\Kcal_{\p \Om}^*$ is invertible on $L^2_0(\p \Om)$ (or $H_0^{-1/2}(\p \Om)$) for $|\lambda|\geq 1/2$ (see \cite{Escauriaza:1992:RTW, Kellogg:1929:FPT, Verchota:1984:LPR}). 

The solution $u$ admits the multipole expansion \cite{Ammari:2007:PMT}: for $|x|>\sup\left\{|y|:y \in \Omega \right\}$, 
\beq\label{scattered2}
(u - H)(x) =  \sum_{|\alpha|, |\beta| \geq 1} \frac{(-1)^{|\beta|}}{\alpha! \beta!} \partial^\alpha H(0) M_{\alpha \beta}(\Omega, \lambda) \partial^\beta \Gamma(x)
\eeq
with two-dimensional multi-indices $\alpha,\beta$ and the so-called generalized polarization tensors (GPTs)
$$M_{\alpha\beta }(\Omega, \lambda)  =  \int_{\partial \Omega} y^\beta\,(\lambda I - \Kcal_\Omega^*)^{-1} \left[\nu\cdot\nabla x^\alpha\right](y)\, d\sigma(y).$$
Here, $\Gamma(x)$ is the fundamental solution to the Laplacian, i.e., 
$\Gamma(x)=\frac{1}{2\pi}\ln|x|$.
We refer the reader to  \cite{Ammari:2003:PGP} for the uniqueness of the inverse problem of determining the shape and conductivity value of an inclusion from the GPTs.

We identify $x=(x_1,x_2) \in \RR^2$ with $z=x_1+ix_2 \in \CC$. We denote by $\Re\{\cdot\}$ and $\Im\{\cdot\}$ the real and imaginary parts of a complex number, respectively.
\begin{definition}[\cite{Ammari:2013:MSM}]
Set $P_k(z)=z^k$ for each natural number $k$.
For each $m,n=1,2,\dots$, we define the complex contracted generalized polarization tensors, which we also call the GPTs, as
\beq\label{def:NN}
\begin{aligned}
\NN_{mn}^{(1)}(\Omega, \lambda)&=\int_{\p \Omega} P_n(z)\left(\lambda I-\Kcal^*_{\p \Omega}\right)^{-1}\left[\pd{P_m }{\nu} \right](z) \,d\sigma(z),\\
\NN_{mn}^{(2)}(\Omega, \lambda)&=\int_{\p \Omega} P_n(z) \left(\lambda I-\Kcal^*_{\p \Omega}\right)^{-1}\left[\pd{\overline{P_m}}{\nu}\right](z) \,d\sigma(z).
\end{aligned}
\eeq
We denote the semi-infinite matrices $\NN^{(1)}=\big(\NN_{mn}^{(1)}\big)_{m,n=1}^\infty$ and $\NN^{(2)}=\big(\NN_{mn}^{(2)}\big)_{m,n=1}^\infty$.
\end{definition}

The GPTs, $\NN_{mn}^{(1)}$ and $\NN_{mn}^{(2)}$, are linear combinations of $M_{\alpha\beta}$, whose expansion coefficients are from the expansion of the complex polynomials into real polynomials. The values of the GPTs can be obtained from multistatic measurements \cite{Ammari:2007:PMT}. 

In this paper, we consider the inverse problem of recovering the inclusion $\Om$ and its conductivity $\sigma_c$ (equivalently, $\lambda = \frac{\sigma_c + \sigma_m}{2(\sigma_c - \sigma_m)}$) from $\NN_{mn}^{(1)}$ and $\NN_{mn}^{(2)}$.

\subsection{Faber polynomial polarization tensors (FPTs)}\label{FG}

We remind the reader that $\Om$ is a planar simply connected bounded domain. We now consider $\Om$ as a domain in the complex plane $\CC$. From the Riemann mapping theorem, there uniquely exist $\gamma > 0$ and a conformal mapping $\Psi$ from $\{w\in\CC:|w|>\gamma\}$ onto $\CC\setminus \overline{\Omega}$ such that
\beq\label{conformal:Psi}
\Psi(w)=w+a_0+\frac{a_1}{w}+\frac{a_2}{w^2}+\cdots.
\eeq 
One can numerically compute $\gamma$ and $a_n$ for a given domain $\Om$ by solving a boundary integral equation \cite{Jung:2021:SEL,Wala:2018:CMD}.

As a univalent function, $\Psi$ defines the so-called Faber polynomials $\{F_m\}_{m=1}^\infty$ \cite{Faber:1903:UPE}, which form a basis for complex analytic functions in $\Omega$, by the relation 
\beq\label{eqn:Fabergenerating}
\frac{w\Psi'(w)}{\Psi(w)-z}=\sum_{m=0}^\infty \frac{F_m(z)}{w^{m}},\quad z\in{\overline{\Omega}},\ |w|>\gamma.
\eeq
The Faber polynomials $F_m$ are monic polynomials of degree $m$ that are uniquely determined by the conformal mapping coefficients $\{ a_n \}_{0 \le n \le m-1}$ via the recursive relation (see, for instance, \cite{Duren:1983:UF})
\beq\label{Faberrecursion}
F_{m+1} (z) = z F_m (z) - m a_m - \sum_{n=0} ^{m} a_n F_{m-n} (z), \quad m\ge 0.
\eeq 
In particular, we have
\be
\begin{aligned}
F_0(z)&=1,\quad
F_1(z)=z-a_0,\quad
F_2(z)=z^2-2a_0 z+a_0^2-2a_1,\\
F_3(z)& = z^3 - 3a_0z^2 + 3(a_0^2-a_1)z - a_0^3+3a_0a_1-3a_2.
\end{aligned}
\ee

\begin{definition}[\cite{Choi:2018:GME}]\label{FPT}
For each $m,n=1,2,\dots$, we define the Faber polynomial polarization tensors (FPTs) as 
\begin{align*}
\FF_{mn}^{(1)}(\Om, \lambda)&=\int_{\p\Om}F_n(z)\left(\lambda I-\Kcal^*_{\p\Om}\right)^{-1}\left[\pd{F_m}{\nu}\right](z)\,d\sigma(z),\\
\FF_{mn}^{(2)}(\Om, \lambda)&=\int_{\p\Om}F_n(z)\left(\lambda I-\Kcal^*_{\p\Om}\right)^{-1}\left[\pd{\overline{F_m}}{\nu}\right](z)\,d\sigma(z).
\end{align*}
We denote the semi-infinite matrices $\FF^{(1)}=\big(\FF_{mn}^{(1)}\big)_{m,n=1}^\infty$ and $\FF^{(2)}=\big(\FF_{mn}^{(2)}\big)_{m,n=1}^\infty$.
\end{definition}

We can express the Faber polynomial $F_m(z)$ as
\beq\label{Fm:pmn}
F_m(z) = \sum_{n=0}^{m} p_{mn} z^n,
\eeq
where for a fixed $m$, the coefficient $p_{mn}$ depends only on $\{ a_k \}_{0 \le k \le m-1}$. One can easily obtain recursive formulas for $p_{mn}$ from \eqnref{Faberrecursion}. 
From \eqnref{Fm:pmn} and the definition of the FPTs, it holds for each $m,n$ that 
\beq\label{FPN}
\begin{aligned}
\FF_{mn}^{(1)} &= \sum_{k=1} ^{m} \sum_{l=1} ^{n} p_{mk} \, p_{nl}\, \mathbb{N}_{kl}^{(1)}, \\
\FF_{mn}^{(2)} &= \sum_{k=1} ^{m} \sum_{l=1} ^{n} \overline{p_{mk}}\,  p_{nl}\, \mathbb{N}_{kl}^{(2)}.
\end{aligned}
\eeq

\subsection{Grunsky coefficients}

An essential property of $F_m(z)$ is that $F_m(\Psi(w))$ has only one positive order term $w^m$. In other words,
\begin{equation*}
F_m(\Psi(w)) = w^m+\sum_{n=1}^{\infty}c_{mn}{w^{-n}},\quad |w|>\gamma,
\end{equation*}
where $c_{mn}$ are the so-called Grunsky coefficients. It holds that (see \cite{Duren:1983:UF})
\begin{equation}\label{eqn:Grunsky:sym1}
nc_{mn} =mc_{nm} \quad \mbox{for all } m,n\in\NN
\end{equation}
and
\begin{equation}\label{GC:recur}
\begin{aligned}
&c_{1m} = a_m,\quad c_{m1} = ma_m,\\
&c_{m(n+1)} = c_{(m+1)n} - a_{m+n} + \sum_{s=1}^{m-1} a_{m-s}c_{sn} - \sum_{s=1}^{n-1} a_{n-s}c_{ms},\quad m,n\geq1.
\end{aligned}
\end{equation}
We can symmetrize the Grunsky coefficients as 
\beq\label{g}
g_{mn} = \sqrt{\frac{n}{m}} \frac{c_{mn}}{\gamma^{m+n}}.
\eeq
From \eqnref{eqn:Grunsky:sym1}, it holds that
\beq\label{eqn:g:sym}
g_{mn}=g_{nm}\quad\mbox{for all } m,n \in\NN.
\eeq
We refer the reader to \cite{Duren:1983:UF} for more details on the Faber polynomials  and to \cite{Chui:1992:FSA, Curtiss:1964:HIF,Curtiss:1966:SDP, Ellacott:1983:CFS,Gao:2004:FSM, Luo:2009:FSM} for their applications in diverse areas.

We denote by $C$ (resp., $G$) the semi-infinite matrix given by the Grunsky coefficients (resp., the symmetrized  Grunsky coefficients), that is,
\begin{equation}\label{mat:G:ep}
C
=\left(
\begin{matrix}
\ c_{11} & c_{12} & c_{13}& \cdots\\[2mm]
\ c_{21} & c_{22} & c_{23} & \cdots\\[2mm]
\ c_{31} & c_{32} & c_{33} & \cdots\\
\ \vdots & \vdots & \vdots & \ddots
\end{matrix}\right),
\qquad
G
=\left(
\begin{matrix}
\ g_{11} & g_{12} & g_{13}& \cdots\\[2mm]
\ g_{21} & g_{22} & g_{23} & \cdots\\[2mm]
\ g_{31} & g_{32} & g_{33} & \cdots\\
\ \vdots & \vdots & \vdots & \ddots
\end{matrix}\right).
\end{equation}
From \eqnref{g}, it holds that
\beq\label{GGG}
G= \NNc^{-\frac{1}{2}}\gamma^{-\NNc} C\gamma^{-\NNc}\NNc^{\frac{1}{2}},
\eeq
where we set
\begin{equation}\label{eqn:matrix}
\gamma^{\pm\NNc}=
\left(
\begin{matrix}
\  \gamma^{\pm1} & 0 & 0 &\cdots  \\[2mm]
\ 0 & \gamma^{\pm2} & 0 & \cdots\\[2mm]
\ 0 & 0 & \gamma^{\pm3} & \cdots \\
\ \vdots & \vdots &\vdots  & \ddots 
\end{matrix}
\right),
\qquad
\NNc^{\pm\frac{1}{2}}=
\left(
\begin{matrix}
\ 1  & 0 & 0 &\cdots \\[2mm]
\ 0 & {\sqrt{2}}^{\, \pm 1}  & 0 & \cdots\\[2mm]
\ 0 & 0 & {\sqrt{3}}^{\, \pm 1} & \cdots\\
\ \vdots & \vdots &\vdots  & \ddots 
\end{matrix}
\right).
\end{equation}
Similarly to equation \eqnref{eqn:matrix}, the matrix $\gamma^{\pm 2\NNc}$ (resp., $\NNc$ and $\NNc^{-1}$) denotes the diagonal matrix whose $(n,n)$-entries are $\gamma^{\pm 2n}$ (resp., $n$ and $n^{-1}$).

\section{Previous studies}

We review previous studies on the shape recovery of a planar conductivity inclusion by using the concept of the GPTs. The first direction is to derive explicit expressions for the conformal mapping coefficients of the inclusion in terms of the GPTs, assuming that the inclusion has extreme conductivity, that is, the inclusion is either insulating or perfectly conducting (see Subsection \ref{subsec:recover:extreme}). The second direction is to adopt an optimization approach for an inclusion with arbitrary constant conductivity (see Subsection \ref{optim}).

\subsection{Conformal mapping recovery for the extreme conductivity case}\label{subsec:recover:extreme}

The exterior conformal mapping $\Psi$ associated with $\Om$ extends to the boundary of $\Om$ as a homeomorphism by the Caratheodory extension theorem \cite{Caratheodory:1913:GBR}.
In particular, $\Psi$ gives a natural parameterization for $\partial \Omega$ and, in particular, determines the shape of $\Om$. 

For an inclusion with extreme conductivity, the multipole expansion of $u$ admits an extension up to $\p\Om$ on which the Dirichlet or Neumann boundary condition is prescribed. For the case $\sigma_c=\infty$, $u$ determines a holomorphic function $U(z)$ satisfying $u(x)=\Re\{U(z)\}$ and, thus,
\beq\notag
\Re\{U(z)\}=\mbox{constant on }\p\Om.
\eeq
For the case $\sigma_c=0$, it holds that
\beq\label{BC:zero}
\Im\{U(z)\}=\mbox{constant on }\p\Om.
\eeq
Using this relation, the coefficients of $\Psi$ were explicitly expressed by the GPTs for an inclusion with $\sigma_c=0$ \cite{Choi:2018:CEP,Kang:2015:CCM}.
Similar results could be derived for the perfectly conducting case by considering a harmonic conjugate of $u$.

The layer potential operators associated with $\Om$ admit infinite series expansions with respect to basis functions defined with $\Psi$ \cite{Jung:2021:SEL}. For an inclusion with a $C^{1,\alpha}$ boundary, one can then solve the conductivity inclusion problem by using these series expansions. As an application, we can express the FPTs with the Grunsky coefficients as follows.
\begin{lemma}[\cite{Choi:2018:GME}]\label{thm:FPT}
Let $\Om$ have a $C^{1,\alpha}$ boundary.
For each $m,n$, it holds that
\begin{align*}
\FF_{mn}^{(1)} (\Om, \lambda)
&=  4\pi n c_{mn} + 4\pi n \left( 1 - 4\lambda^2 \right) \left(C\left( 4\lambda^2 I - \gamma^{-2\NNc}\, \overline{C}\gamma^{-2\NNc} C \right)^{-1} \right)_{mn},\\
\FF_{mn}^{(2)}(\Om, \lambda)
&= 8\pi n \lambda \gamma^{2m}\delta_{mn} + 8 \pi n \lambda \gamma^{2m} \left( 1 - 4\lambda^2 \right) \left( \left( 4\lambda^2 I - \gamma^{-2\NNc}\, \overline{C}\gamma^{-2\NNc} C \right)^{-1} \right)_{mn}.
\end{align*}
Here, $\delta_{mn}$ is the Kronecker delta function.
\end{lemma}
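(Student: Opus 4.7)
The plan is to solve the integral equation $(\lambda I - \mathcal{K}^*_{\partial\Omega})[\phi] = \partial F_m/\partial\nu$ on $\partial\Omega$ (and its analogue with $\overline{F_m}$) by using a matrix representation of the Neumann--Poincar\'{e} operator in a basis induced by the exterior conformal mapping $\Psi$, and then to integrate the resulting density against $F_n$ to read off $\FF^{(1)}_{mn}$ and $\FF^{(2)}_{mn}$.

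First, I would invoke the series representation of $\mathcal{K}^*_{\partial\Omega}$ developed in \cite{Jung:2019:DEE,Jung:2021:SEL}: on a $C^{1,\alpha}$ boundary, pulling back the Fourier modes $\{e^{\pm in\theta}\}_{n\ge 1}$ from $\{|w|=\gamma\}$ through $\theta\mapsto\Psi(\gamma e^{i\theta})$ produces two natural bases (a ``holomorphic-side'' and an ``anti-holomorphic-side'' one) in which $\mathcal{K}^*_{\partial\Omega}$ has an explicit matrix form whose entries are built from the Grunsky coefficients $c_{mn}$ scaled by $\gamma^{-n}$. The defining identity $F_m(\Psi(w)) = w^m + \sum_{n\ge 1} c_{mn} w^{-n}$ on $|w|=\gamma$, together with its complex conjugate, lets me expand the Neumann data $\partial F_m/\partial\nu$ and $\partial\overline{F_m}/\partial\nu$ in these bases.

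Next, I would convert the resolvent equation into a matrix equation on sequence spaces. Because $\mathcal{K}^*_{\partial\Omega}$ in these bases acts by swapping the two sides via multiplication by $\gamma^{-\NNc}C$ in one direction and $\gamma^{-\NNc}\overline{C}$ in the other, eliminating one block through a Schur complement produces the resolvent factor $\bigl(4\lambda^2 I - \gamma^{-2\NNc}\overline{C}\gamma^{-2\NNc}C\bigr)^{-1}$. The $4\lambda^2 = (2\lambda)^2$ records the two successive applications of $\lambda I$ during the elimination, while invertibility for every $|\lambda|>1/2$ follows from the contractivity $\|G\|\le 1$ of the symmetrized Grunsky operator $G$.

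Finally, expanding $F_n$ through the same series and integrating against $\phi$ on $\partial\Omega$ reduces to inner products of Fourier modes on $|w|=\gamma$, hence to entries of matrix products. The ``free'' part of the geometric Neumann series collapses to $4\pi n c_{mn}$ after using the symmetry $nc_{mn}=mc_{nm}$ and the factor $n$ coming from differentiating $F_n$, while the remaining resolvent part assembles exactly into $4\pi n(1-4\lambda^2)\bigl(C(4\lambda^2 I - \gamma^{-2\NNc}\overline{C}\gamma^{-2\NNc}C)^{-1}\bigr)_{mn}$, yielding the stated formula for $\FF^{(1)}_{mn}$. The $\FF^{(2)}_{mn}$ formula is obtained identically; the diagonal Kronecker term $8\pi n\lambda\gamma^{2m}\delta_{mn}$ appears because $\overline{F_m}(\Psi(w))$ has leading mode $\overline{w}^m = \gamma^{2m}w^{-m}$ on $|w|=\gamma$, which pairs only with the $w^n$-mode of $F_n(\Psi(w))$. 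The main obstacle is the careful bookkeeping of conjugations and of the $\sqrt{n/m}$ normalizations implicit in the symmetrized Grunsky matrix $G$, so that the final resolvent factor emerges in the clean symmetric form $4\lambda^2 I - \gamma^{-2\NNc}\overline{C}\gamma^{-2\NNc}C$ rather than a scaled or transposed variant.
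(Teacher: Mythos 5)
The paper does not prove this lemma itself---it imports it from \cite{Choi:2018:GME}---and the route it points to is exactly the one you describe: the Faber-basis/Grunsky-coefficient matrix representation of the Neumann--Poincar\'e operator from \cite{Jung:2021:SEL}, expansion of $\partial F_m/\partial\nu$ and $\partial\overline{F_m}/\partial\nu$ via $F_m(\Psi(w))=w^m+\sum_n c_{mn}w^{-n}$, block (Schur-complement) inversion of $\lambda I-\mathcal{K}^*_{\partial\Omega}$, and pairing with $F_n$ as Fourier modes on $|w|=\gamma$. Your outline is sound and matches that method; the only slip is cosmetic: the $4$ in $4\lambda^2$ comes from the factors $\tfrac12$ in the off-diagonal (conjugating) blocks of the NP operator entering the Schur complement, not from ``two successive applications of $\lambda I$.''
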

For the case $\lambda=\pm\frac{1}{2}$, Lemma \ref{thm:FPT} implies that
\begin{align}
&\FF_{m1}^{(1)}\left(\Om, \lambda \right) = 4\pi c_{m1} \quad \mbox{for }m\geq 1,\label{FPT:direct1}\\
&\FF_{11}^{(2)}\left(\Om,\lambda \right) = 8\pi \lambda \gamma^2, \quad \mathbb{F}_{21}^{(2)} \left(\Om, \lambda \right) = 0.\label{FPT:direct2}
\end{align}
Using these relations and \eqnref{GC:recur} for $n=1$, one can completely recover the conformal mapping coefficients, where the expression formulas are much simpler than those derived in \cite{Choi:2018:CEP,Kang:2015:CCM} as follows.
\begin{theorem}[\cite{Choi:2021:ASR}]\label{thm:exact0}
Let $\Om$ be a simply connected, bounded $C^{1,\alpha}$ domain with $\sigma_c = 0$ or $\infty$ (equivalently, $\lambda = -\frac{1}{2}$ or $\frac{1}{2}$).
The exterior conformal mapping $\Psi$ associated with $\Om$ (see \eqnref{conformal:Psi}) satisfies
\begin{align*}
\gamma^2 &= \frac{\lambda}{2\pi}\, \mathbb{N}_{11}^{(2)}(\Om,\lambda), \quad a_0 = \frac{\mathbb{N}_{12}^{(2)} (\Om,\lambda ) }{2\,\mathbb{N}_{11} ^{(2)} (\Om,\lambda) }, \\
 a_m &=\frac{\lambda^2}{\pi m} \sum_{n=1} ^{m} p_{mn}\, {\mathbb{N}_{n1} ^{(1)} (\Om,\lambda)},\quad m\ge 1,
\end{align*}
where $p_{m1}, p_{m2}, \cdots, p_{mm}$ denote the coefficients of $F_m(z)$ of $\Om$ as defined in \eqnref{Fm:pmn}. 
In particular, each $a_m$ is uniquely determined by $\mathbb{N}_{12}^{(2)}$ and $\{\mathbb{N}_{n1}^{(1)}\}_{1\le n \le m}$. 
\end{theorem}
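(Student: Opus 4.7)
The plan is to evaluate selected entries of $\FF^{(1)}$ and $\FF^{(2)}$ in two different ways and then equate them. On one hand, the identities \eqnref{FPT:direct1}--\eqnref{FPT:direct2} give very explicit values at $\lambda=\pm\tfrac{1}{2}$, because the factor $(1-4\lambda^2)$ appearing in Lemma \ref{thm:FPT} vanishes there; on the other hand, the change-of-basis formula \eqnref{FPN} expresses each $\FF_{mn}^{(i)}$ as a triangular linear combination of the $\NN_{kl}^{(i)}$ with coefficients built from $\{p_{mk}\}$. The only Faber coefficients I will need explicitly are $p_{11}=1$, $p_{21}=-2a_0$, $p_{22}=1$, which are immediate from the expressions for $F_1, F_2$ displayed after \eqnref{Faberrecursion}, together with the Grunsky identity $c_{m1}=ma_m$ from \eqnref{GC:recur}.

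First I would compute $\gamma^2$ and $a_0$. Setting $m=n=1$ in \eqnref{FPN} and using $p_{11}=1$, everything collapses to $\FF_{11}^{(2)}=\NN_{11}^{(2)}$, which, combined with $\FF_{11}^{(2)}=8\pi\lambda\gamma^2$ from \eqnref{FPT:direct2} and the substitution $4\lambda^2=1$, gives $\gamma^2=\tfrac{\lambda}{2\pi}\NN_{11}^{(2)}$. Next, setting $m=1$, $n=2$ in \eqnref{FPN} yields
\[
\FF_{12}^{(2)} \;=\; \overline{p_{11}}\,p_{21}\,\NN_{11}^{(2)} + \overline{p_{11}}\,p_{22}\,\NN_{12}^{(2)} \;=\; -2a_0\NN_{11}^{(2)}+\NN_{12}^{(2)},
\]
while Lemma \ref{thm:FPT} with $4\lambda^2=1$ reduces to $\FF_{mn}^{(2)}=8\pi n\lambda\gamma^{2m}\delta_{mn}$, which kills all off-diagonal entries; hence $\FF_{12}^{(2)}=0$ and $a_0=\NN_{12}^{(2)}/(2\NN_{11}^{(2)})$.

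Finally, for $a_m$ with $m\ge 1$, I would specialize \eqnref{FPN} to $n=1$; using $p_{11}=1$ collapses the inner sum to a single term, so $\FF_{m1}^{(1)}=\sum_{k=1}^{m}p_{mk}\NN_{k1}^{(1)}$. Equating this with $\FF_{m1}^{(1)}=4\pi c_{m1}=4\pi m a_m$ from \eqnref{FPT:direct1} and \eqnref{GC:recur}, and using $\tfrac{1}{4}=\lambda^2$, produces the stated formula. The uniqueness claim then follows by a straightforward induction on $m$: since the Faber recursion \eqnref{Faberrecursion} shows that $p_{mk}$ is a polynomial in $\{a_j\}_{j\le m-1}$, knowledge of $\NN_{12}^{(2)}$ and $\NN_{n1}^{(1)}$ for $n\le m$ determines $a_0,\ldots,a_m$ successively. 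The argument is essentially mechanical; the only place where anything non-obvious occurs is the extreme-conductivity simplification of Lemma \ref{thm:FPT}, already encoded in \eqnref{FPT:direct1}--\eqnref{FPT:direct2}, so there is no substantial obstacle to carrying it out.
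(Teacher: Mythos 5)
Your argument is correct and is essentially the paper's own route: the paper derives this theorem from Lemma \ref{thm:FPT} specialized to $4\lambda^2=1$ (i.e., \eqnref{FPT:direct1}--\eqnref{FPT:direct2}), the change of basis \eqnref{FPN} between FPTs and GPTs, and the relation $c_{m1}=ma_m$ from \eqnref{GC:recur}, exactly as you do. The only cosmetic difference is that you exploit the vanishing of $\FF_{12}^{(2)}$ rather than the paper's displayed $\FF_{21}^{(2)}=0$; both follow from the same off-diagonal vanishing in Lemma \ref{thm:FPT}, and your choice yields the stated formula for $a_0$ directly.
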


\subsection{Optimization approach for the arbitrary conductivity case}\label{optim}

Let $D$ be an inclusion having the conductivity $\sigma_c$ with a $C^2$ boundary given by a small perturbation of $D_0$, that is,
\beq\label{Om:deform}
\p D = \left\{ x + \ep f(x) \nu_0(x) : \, x\in \p D_0 \right\}
\eeq
with a real-valued function $f\in C^1(\p D_0)$ and a small parameter $\ep>0$, 
where $\nu_0$ is the outward unit normal vector to $\p D_0$. 
It then holds that (see \cite{Ammari:2012:GPT}) 
\beq\label{eqn:GPT:asymp}
\begin{aligned}
	&\sum_{\alpha, \beta} a_\alpha b_\beta M_{\alpha \beta} (D,\lambda) - \sum_{\alpha, \beta} a_\alpha b_\beta M_{\alpha \beta} (D_0,\lambda) 	\\
=&\, \ep\, \Big(\frac{\sigma_c}{\sigma_m}-1\Big) \int_{\p \Omega} f(x) \bigg[ \frac{\p v}{\p \nu} \Big|^- \frac{\p u}{\p \nu} \Big|^- + \frac{\sigma_m}{\sigma_c} \frac{\p v}{\p T} \Big|^- \frac{\p u}{\p T} \Big|^- \bigg] (x)\, d \sigma(x) + O(\ep^2),
	\end{aligned}
	\eeq
	where $u$ and $v$ are the solutions to 
	\beq\label{cond_eqn1}
	\begin{cases}
		\ds\Delta u=0\quad&\mbox{in } D_0\cup (\RR^2\setminus\overline{D_0}), \\
		\ds u\big|^+=u\big|^-\quad&\mbox{on }\p D_0,\\[1mm]
		\ds \sigma_m\pd{u}{\nu}\Big|^+=\sigma_c\pd{u}{\nu}\Big|^-\quad&\mbox{on }\p D_0,\\[1mm]
		\ds u(x) - H_1(x)=O({|x|^{-1}})\quad&\mbox{as } |x| \to \infty
	\end{cases}
\end{equation}
and
\beq\label{cond_eqn2}
\begin{cases}
	\ds\Delta v=0\quad&\mbox{in } D_0\cup (\RR^2\setminus\overline{D_0}), \\
	\ds \sigma_c v\big|^+=\sigma_m v\big|^-\quad&\mbox{on }\p D_0,\\[1mm]
	\ds \pd{v}{\nu}\Big|^+=\pd{v}{\nu}\Big|^-\quad&\mbox{on }\p D_0,\\[1mm]
	\ds v(x) - H_2(x)  =O({|x|^{-1}})\quad&\mbox{as } |x| \to \infty
\end{cases}
\end{equation}
with entire harmonic functions  $H_1(x)=\sum_\alpha a_\alpha x^\alpha$ and $H_2(x)=\sum_\beta b_\beta x^\beta$.

Iterative methods have been developed for approximating the shape of an inclusion $\Om$ by adopting an optimization approach, where the cost function for a test domain $D$ has the form (with a fixed positive integer $K$)  \cite{Ammari:2014:GPT,Ammari:2012:GPT}
$$
J[D] = \frac{1}{2} \sum_{|\alpha|+|\beta| \le K} \Bigg|\sum_{\alpha, \beta} a_{\alpha} b_{\beta} M_{\alpha \beta}  (D,\lambda) - \sum_{\alpha, \beta} a_{\alpha} b_{\beta} M_{\alpha \beta} (\Omega,\lambda) \Bigg|^2.
$$
Equation \eqnref{eqn:GPT:asymp} provides the shape derivative for the cost function.

If $D_0$ is a disk, one can simply solve \eqnref{cond_eqn1} and \eqnref{cond_eqn2} and, by rewriting \eqnref{eqn:GPT:asymp}, derive asymptotic formulas for the Fourier coefficients of the shape perturbation function $f$ as elementary functions of the GPTs (see \cite{Ammari:2010:CIP}). By using the asymptotic formulas for $f$, one can non-iteratively approximate an inclusion $\Om$ by considering it as a small perturbation of an equivalent disk, where we set $D_0$ as the equivalent ellipse and $f$ as the perturbation from $\p D_0$ to $\p\Om$. 

If $D_0$ is an ellipse, the integral formula in \eqnref{eqn:GPT:asymp} is too complicated in Cartesian coordinates to find an explicit analytic form. In \cite{Choi:2021:ASR}, the curvilinear orthogonal coordinates and the Faber polynomials associated with the ellipse were successfully employed to derive explicit asymptotic formulas for the integral in \eqnref{eqn:GPT:asymp}. These asymptotic formulas (by taking $D_0$ as an equivalent ellipse) allow us to non-iteratively approximate an inclusion with arbitrary conductivity of general shape, including a straight or asymmetric shape (see \cite{Choi:2021:ASR} for the details).

\section{Reconstruction of a smooth inclusion}

For an inclusion with arbitrary constant conductivity, the boundary value of $u$ in \eqnref{transmission} is no longer explicit. Thus, it is a challenge to generalize Theorem \ref{thm:exact0} to the arbitrary constant conductivity case. In this section, we derive factorization formulas for two semi-infinite matrices whose entries are scalar-valued complex contracted GPTs and use the formulas to provide an answer to this problem as the primary conclusion of this paper. 

\subsection{Matrix factorizations for the GPTs}

We denote by $I$ the semi-infinite identity matrix. From \eqnref{GGG}, it holds that
$$ 4\lambda^2 I - \gamma^{-2\NNc}\, \overline{C}\gamma^{-2\NNc} C=\gamma^{-\NNc}\NNc^{\frac{1}{2}} \left( 4\lambda^2 I - \overline{G} G \right)\gamma^\NNc \NNc^{-\frac{1}{2}}.$$
Lemma \ref{thm:FPT} and \eqnref{GGG} lead to matrix factorizations for the FPTs.
\begin{lemma}\label{lemma:FPT:factor}
Let $\Om$ have a $C^{1,\alpha}$ boundary. 
The FPTs of $\Om$ admit the matrix factorizations
\beq\label{FPT:factor}
\begin{aligned}
\FF^{(1)}& = 4\pi \,\gamma^\NNc \NNc^{\frac{1}{2}}\, G \left[ I+ (1-4\lambda^2) \left( 4\lambda^2 I - \overline{G} G \right)^{-1} \right] \NNc^{\frac{1}{2}}\, \gamma^\NNc,  \\
\FF^{(2)}&= 8\pi \lambda\, \gamma^{\NNc} \NNc^{\frac{1}{2}} \left[ I + (1-4\lambda^2) \left( 4\lambda^2 I - \overline{G} G \right)^{-1}  \right] \NNc^{\frac{1}{2}}\, \gamma^{\NNc}.
\end{aligned}
\eeq
\end{lemma}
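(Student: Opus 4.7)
The plan is to derive both factorizations directly from the entry-wise formulas in Lemma~\ref{thm:FPT} by rewriting them as identities between semi-infinite matrices and then substituting the change of variables \eqnref{GGG} that passes from the Grunsky matrix $C$ to its symmetrization $G$. Because the auxiliary matrices $\gamma^{\pm\NNc}$ and $\NNc^{\pm1/2}$ are diagonal and commute with each other, the entire argument amounts to algebraic bookkeeping on top of the already-established Lemma~\ref{thm:FPT}.

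First I would convert Lemma~\ref{thm:FPT} into matrix form. The factor $n$ attached to the column index becomes right-multiplication by $\NNc$, and the factor $\gamma^{2m}$ attached to the row index of $\FF^{(2)}_{mn}$ becomes left-multiplication by $\gamma^{2\NNc}$. This yields
\begin{equation*}
\FF^{(1)} = 4\pi\, C\NNc + 4\pi(1-4\lambda^2)\, C\bigl(4\lambda^2 I - \gamma^{-2\NNc}\overline{C}\gamma^{-2\NNc}C\bigr)^{-1}\NNc,
\end{equation*}
together with an analogous expression for $\FF^{(2)}$ in which the prefactor $C$ is replaced by $2\lambda\gamma^{2\NNc}$ and the $C$ in the second term is correspondingly absent.

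Next I would invoke the similarity identity noted just before the lemma,
\begin{equation*}
4\lambda^2 I - \gamma^{-2\NNc}\overline{C}\gamma^{-2\NNc}C = \gamma^{-\NNc}\NNc^{1/2}\bigl(4\lambda^2 I - \overline{G}G\bigr)\gamma^{\NNc}\NNc^{-1/2},
\end{equation*}
whose verification is immediate from \eqnref{GGG} (noting that $\gamma$ is real so that $\overline{G} = \NNc^{-1/2}\gamma^{-\NNc}\overline{C}\gamma^{-\NNc}\NNc^{1/2}$) together with commutativity of the diagonal factors. Inverting swaps the conjugating factors on either side. Substituting both this inverted identity and $C = \gamma^{\NNc}\NNc^{1/2} G \NNc^{-1/2}\gamma^{\NNc}$ into the matrix expressions obtained above, the intermediate strings $\NNc^{\pm1/2}\gamma^{\pm\NNc}$ telescope; what survives on the outside is the symmetric sandwich $\gamma^{\NNc}\NNc^{1/2}(\,\cdot\,)\NNc^{1/2}\gamma^{\NNc}$, while the bracket collapses into $G\bigl[I+(1-4\lambda^2)(4\lambda^2 I - \overline{G}G)^{-1}\bigr]$ for $\FF^{(1)}$ and into $2\lambda\bigl[I+(1-4\lambda^2)(4\lambda^2 I - \overline{G}G)^{-1}\bigr]$ for $\FF^{(2)}$, matching exactly \eqnref{FPT:factor}.

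There is no substantive analytic obstacle in this lemma; the only point that requires care is the placement of the non-commuting factors $G$ and $\overline{G}$ relative to the diagonal ones, since a misplaced $\NNc^{\pm1/2}$ or $\gamma^{\pm\NNc}$ would spoil the symmetric outer factorization. Convergence of the semi-infinite products and invertibility of $4\lambda^2 I - \overline{G}G$ for $|\lambda|>1/2$ are inherited from Lemma~\ref{thm:FPT} under the $C^{1,\alpha}$ assumption, so no genuinely new analytic input is needed. The payoff of this purely formal rewriting, which I would emphasize in the remarks following the proof, is that the common bracket factor $I+(1-4\lambda^2)(4\lambda^2 I-\overline{G}G)^{-1}$ now appears in both $\FF^{(1)}$ and $\FF^{(2)}$ in identical position; this is precisely the structure that later allows one to cancel it and isolate $G$ in the inversion formula.
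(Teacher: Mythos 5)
Your proposal is correct and follows exactly the route the paper intends: it rewrites the entrywise formulas of Lemma~\ref{thm:FPT} in matrix form and substitutes the similarity relation obtained from \eqnref{GGG} (stated just before the lemma), which is all the paper itself invokes for this result. The matrix bookkeeping you carry out (placement of $\NNc^{\pm 1/2}$, $\gamma^{\pm\NNc}$ and the inversion of the conjugated factor) is accurate, so nothing is missing.
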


From the relations of the GPTs and FPTs, we can then obtain matrix factorizations for the GPTs.
Set $P=(p_{mn})_{m,n=1}^\infty$ with $p_{mn}$ given by \eqnref{Fm:pmn}. 
We can rewrite relations \eqnref{FPN} in matrix form as 
\beq\label{FPNP}
\begin{aligned}
\FF^{(1)} &= P\, \mathbb{N}^{(1)} P^T,\\
\FF^{(2)} &= \overline{P}\, \mathbb{N}^{(2)} P^T,
\end{aligned}
\eeq
where $\overline{P}$ and $P^T$ denote the conjugate and transpose matrices of $P$, respectively.
From \eqnref{Faberrecursion}, one can easily find that, for each $m\geq1$,
\begin{align}\label{inititalP}
&p_{mm} = 1,\quad p_{(m+1)m} = - (m+1) a_0, \quad
p_{mn}=0\quad\mbox{for all }n\geq m+1.
\end{align}
Indeed,
\beq\label{def:P}
P=
\left(
\begin{matrix}
1 & 0 & 0   \quad  \cdots\\[1.5mm]
-2a_0 & 1 & 0   \quad \cdots\\[1.5mm]
3a_0^2-3a_1 & -3a_ 0 &  1  \quad \cdots\\[1.5mm]
\vdots & \vdots &  \vdots \quad \ddots 
\end{matrix}
\right).
\eeq
Hence, $P$ is lower triangular and invertible. Similarly, $\overline{P}$ and $P^T$ are invertible. From Lemma \ref{lemma:FPT:factor} and \eqnref{FPNP}, we have the following theorem.
\begin{theorem}\label{Factorization:smooth}
Let $\Om$ have a $C^{1,\alpha}$ boundary. 
The GPTs of $\Om$ admit the matrix factorizations
\begin{align}
 \mathbb{N}^{(1)} & = 4\pi \,P^{-1} \gamma^\NNc \NNc^{\frac{1}{2}}\, G \left[ I+ (1-4\lambda^2) \left( 4\lambda^2 I - \overline{G} G \right)^{-1} \right] \NNc^{\frac{1}{2}}\, \gamma^\NNc \,(P^T)^{-1}, \label{pnonep}\\
 \mathbb{N}^{(2)} &= 8\pi \lambda\, \overline{P}^{-1} \gamma^{\NNc} \NNc^{\frac{1}{2}}\, \left[ I + (1-4\lambda^2) \left( 4\lambda^2 I - \overline{G} G \right)^{-1}  \right] \NNc^{\frac{1}{2}}\, \gamma^{\NNc} \,(P^T)^{-1}. \label{pntwop}
\end{align}
\end{theorem}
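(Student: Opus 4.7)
The plan is to read Theorem 4.1 as a direct algebraic consequence of Lemma 3.3 combined with the change-of-basis identity \eqnref{FPNP}, once the invertibility of $P$ (and hence of $\overline{P}$ and $P^T$) is established.

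First I would argue that $P$ is invertible as a semi-infinite matrix. By \eqnref{inititalP} and \eqnref{def:P}, $P$ is lower triangular with $p_{mm}=1$ on the diagonal. Every finite leading principal block is therefore a unit lower triangular matrix, hence invertible, and the entries of $P^{-1}$ can be defined row by row by the usual triangular back-substitution; $P^{-1}$ is itself lower triangular with unit diagonal and finitely many nonzero entries in each row. Conjugating and transposing, $\overline{P}^{-1}$ is lower triangular and $(P^T)^{-1}=(P^{-1})^T$ is upper triangular, each again with unit diagonal and finitely many nonzero entries per row (respectively per column).

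Second, I would invert \eqnref{FPNP}. Writing
\[
\mathbb{N}^{(1)} = P^{-1}\, \mathbb{F}^{(1)}\, (P^T)^{-1}, \qquad \mathbb{N}^{(2)} = \overline{P}^{-1}\, \mathbb{F}^{(2)}\, (P^T)^{-1},
\]
each entry on the right-hand side is a finite sum: the triangular structure of $P^{-1}$ (resp.\ $(P^T)^{-1}$) ensures that $\bigl[P^{-1}\mathbb{F}^{(1)}\bigr]_{ij}=\sum_{k\le i}[P^{-1}]_{ik}\mathbb{F}^{(1)}_{kj}$ and $\bigl[\mathbb{F}^{(1)}(P^T)^{-1}\bigr]_{ij}=\sum_{k\le j}\mathbb{F}^{(1)}_{ik}[(P^T)^{-1}]_{kj}$, so no convergence issue arises. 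Substituting the factorizations for $\mathbb{F}^{(1)}$ and $\mathbb{F}^{(2)}$ given by Lemma \ref{lemma:FPT:factor} then yields \eqnref{pnonep} and \eqnref{pntwop} immediately.

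The main difficulty is really bookkeeping rather than analysis: one must be careful that the infinite products $P^{-1}\gamma^{\NNc}\NNc^{1/2}G[\cdots]\NNc^{1/2}\gamma^{\NNc}(P^T)^{-1}$ are interpreted entrywise so that the associativity used to combine the two substitutions is legitimate. This is justified by the triangular sparsity of the outer factors, which makes every entry of every intermediate product a finite sum and commutes freely with the matrix inverse $\bigl(4\lambda^2 I-\overline{G}G\bigr)^{-1}$ sitting in the middle (whose existence was already used in Lemma \ref{thm:FPT} via $|\lambda|>\tfrac12$). Once this point is noted, the proof is a one-line substitution.
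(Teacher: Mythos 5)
Your proposal is correct and follows essentially the same route as the paper: the authors also note from \eqnref{inititalP} and \eqnref{def:P} that $P$ is lower triangular and invertible (hence so are $\overline{P}$ and $P^T$), and then obtain \eqnref{pnonep}--\eqnref{pntwop} by combining the FPT factorizations of Lemma \ref{lemma:FPT:factor} with the change-of-basis relations \eqnref{FPNP}. Your additional remarks on the entrywise finiteness of the products coming from the triangular sparsity of $P^{-1}$ and $(P^T)^{-1}$ are just a more explicit account of the bookkeeping the paper leaves implicit.
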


We note that the GPTs are expressed in terms of the exterior conformal mapping and the conductivity value of the inclusion. The matrix factorizations \eqnref{pnonep} and \eqnref{pntwop} have one common factor depending on $\lambda$, which satisfies
\beq\label{commonfactor}
I+ (1-4\lambda^2) \left( 4\lambda^2 I -  \overline{G} G \right)^{-1}= \left( I - \overline{G} G \right) \left( 4\lambda^2 I - \overline{G} G \right)^{-1}.
\eeq

In the instance that $\lambda=\pm\frac{1}{2}$ (that is, the insulating or perfectly conducting case), the common factor \eqnref{commonfactor} is the identity matrix. It then follows the explicit expressions of the conformal mapping coefficients of $\Om$ in Theorem \ref{thm:exact0}. If $\lambda\neq\pm\frac{1}{2}$, it becomes more complicated to derive explicit formulas for the shape of the inclusion from \eqnref{pnonep} and \eqnref{pntwop}. 

\subsection{Inversion formula}
Our main idea to eliminate the common factor \eqnref{commonfactor} between $\mathbb{N}^{(1)}$ and $\mathbb{N}^{(2)}$ is to consider 
$$\mathbb{N}^{(1/2)}:= \mathbb{N}^{(1)} \big(\mathbb{N}^{(2)} \big)^{-1}.$$
We modify $\NN^{(1)}$ and $\NN^{(2)}$ as 
\beq\label{def:tildeNN}
\widetilde{\NN}^{(1)}= \mathbb{N}^{(1/2)} \,\MM\, \NN^{(2)},\quad
 \widetilde{\NN}^{(2)}:=\MM\, \NN^{(2)}
\eeq
with
\beq\notag
\MM=\left(I-\overline{\mathbb{N}^{(1/2)} }\,\mathbb{N}^{(1/2)}\right)\left(I-4\lambda^2\,\overline{\mathbb{N}^{(1/2)} }\,\mathbb{N}^{(1/2)}\right)^{-1}.
\eeq
Let $\widetilde{\NN}^{(1)}_{mn}$ and $\widetilde{\NN}^{(2)}_{mn}$ denote the $(m,n)$-component of $\widetilde{\NN}^{(1)}$ and $\widetilde{\NN}^{(2)}$, respectively. 
If $\lambda=\pm\frac{1}{2}$, then $\MM=I$ and $\widetilde{\NN}^{(j)}=\NN^{(j)}$, $j=1,2$. 
For $\widetilde{\NN}^{(2)}$, the same expression of ${\NN}^{(2)}$ in \eqnref{pntwop} with extreme conductivity holds except the constant multiplication as follows. 
\begin{lemma}\label{lemma:tildeN2}
For arbitrary constant $\sigma_c$ satisfying $0<\sigma_c\neq\sigma_m<\infty$, it holds that
$$\widetilde{\NN}^{(2)}
=\frac{2\pi}{\lambda}\,\overline{P}^{-1}\,\gamma^{2\NNc} \NNc \,(P^T)^{-1}.$$
\end{lemma}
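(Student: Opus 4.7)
The plan is to compute $\NN^{(1/2)}:=\NN^{(1)}(\NN^{(2)})^{-1}$ explicitly from the factorizations in Theorem \ref{Factorization:smooth} and observe a dramatic cancellation, after which $\MM$ takes a transparent form and multiplication by $\NN^{(2)}$ leaves exactly the claimed expression. To organize the bookkeeping, I would write $A:=\gamma^\NNc\NNc^{1/2}$ (a real diagonal matrix, so $A=\NNc^{1/2}\gamma^\NNc=\overline{A}$) and $Q:=I+(1-4\lambda^2)(4\lambda^2 I-\overline{G}G)^{-1}$, so that Theorem \ref{Factorization:smooth} reads
\begin{equation*}
\NN^{(1)}=4\pi\,P^{-1}AGQA\,(P^T)^{-1},\qquad \NN^{(2)}=8\pi\lambda\,\overline{P}^{-1}AQA\,(P^T)^{-1}.
\end{equation*}

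First I would invert $\NN^{(2)}$ and multiply to get
\begin{equation*}
\NN^{(1/2)}=\frac{1}{2\lambda}\,P^{-1}AGQA(P^T)^{-1}\cdot P^TA^{-1}Q^{-1}A^{-1}\overline{P}=\frac{1}{2\lambda}\,P^{-1}AGA^{-1}\overline{P},
\end{equation*}
where the two inner pairs $(P^T)^{-1}P^T$ and $QA\cdot A^{-1}Q^{-1}$ collapse to the identity (the diagonal factors $\gamma^\NNc$ and $\NNc^{1/2}$ commute, so $AQA\cdot A^{-1}Q^{-1}A^{-1}=AGA^{-1}$ holds once $G$ is kept in place). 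Since $\lambda\in\RR$ and $A$ is real, this also gives $\overline{\NN^{(1/2)}}=\frac{1}{2\lambda}\,\overline{P}^{-1}A\,\overline{G}\,A^{-1}P$, and therefore
\begin{equation*}
\overline{\NN^{(1/2)}}\,\NN^{(1/2)}=\frac{1}{4\lambda^2}\,\overline{P}^{-1}A\,\overline{G}G\,A^{-1}\overline{P}.
\end{equation*}

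Next I would use the conjugation $\overline{P}^{-1}A\cdot X\cdot A^{-1}\overline{P}$ is a homomorphism in $X$, so that
\begin{equation*}
I-\overline{\NN^{(1/2)}}\NN^{(1/2)}=\tfrac{1}{4\lambda^2}\overline{P}^{-1}A(4\lambda^2 I-\overline{G}G)A^{-1}\overline{P},
\end{equation*}
\begin{equation*}
I-4\lambda^2\,\overline{\NN^{(1/2)}}\NN^{(1/2)}=\overline{P}^{-1}A(I-\overline{G}G)A^{-1}\overline{P}.
\end{equation*}
Forming $\MM$ and using $Q=(I-\overline{G}G)(4\lambda^2 I-\overline{G}G)^{-1}$ (i.e.\ \eqnref{commonfactor}) gives
\begin{equation*}
\MM=\tfrac{1}{4\lambda^2}\overline{P}^{-1}A(4\lambda^2 I-\overline{G}G)(I-\overline{G}G)^{-1}A^{-1}\overline{P}=\tfrac{1}{4\lambda^2}\overline{P}^{-1}AQ^{-1}A^{-1}\overline{P}.
\end{equation*}
Finally I would multiply by $\NN^{(2)}=8\pi\lambda\,\overline{P}^{-1}AQA(P^T)^{-1}$; the inner $A^{-1}\overline{P}\cdot\overline{P}^{-1}A$ and $Q^{-1}Q$ both collapse, leaving
\begin{equation*}
\widetilde{\NN}^{(2)}=\MM\NN^{(2)}=\tfrac{2\pi}{\lambda}\,\overline{P}^{-1}A^2(P^T)^{-1}=\tfrac{2\pi}{\lambda}\,\overline{P}^{-1}\gamma^{2\NNc}\NNc\,(P^T)^{-1},
\end{equation*}
which is the claimed identity.

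The computation is essentially routine once one accepts Theorem \ref{Factorization:smooth}; the only real obstacle is non-commutativity, so care is needed to verify that every cancellation is done on the correct side (in particular, that $G$ is left unconjugated while $Q^{\pm 1}$ and the scalars $\gamma$, $\sqrt{n}$ can be shuffled freely through $A$). I would also note explicitly that $\lambda\in\RR$ (from $0<\sigma_c\neq\sigma_m<\infty$) to justify $\overline{\lambda}=\lambda$ when conjugating $\NN^{(1/2)}$, and that invertibility of $4\lambda^2 I-\overline{G}G$ and of $I-\overline{G}G$ on the relevant space is inherited from the factorizations already used to write $\NN^{(1)}$ and $\NN^{(2)}$, so forming $(\NN^{(2)})^{-1}$ and $\MM$ is legitimate.
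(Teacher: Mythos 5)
Your proposal is correct and follows essentially the same route as the paper: both compute $\NN^{(1/2)}=(2\lambda)^{-1}P^{-1}\gamma^{\NNc}\NNc^{1/2}\,G\,\NNc^{-1/2}\gamma^{-\NNc}\,\overline{P}$ from the factorizations of Theorem \ref{Factorization:smooth}, express $\overline{G}G$ (equivalently $\overline{\NN^{(1/2)}}\NN^{(1/2)}$) by conjugation, and cancel the common factor \eqnref{commonfactor} when forming $\MM\,\NN^{(2)}$. The only difference is organizational (you simplify $\MM$ first, the paper substitutes into $\NN^{(2)}$ first), and aside from a garbled parenthetical in the cancellation step the computation matches the paper's.
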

\begin{proof}

By combining \eqnref{pnonep} and \eqnref{pntwop}, we obtain
\beq\label{Nhalf}
\mathbb{N}^{(1/2)} = (2\lambda)^{-1} P^{-1} \gamma^\NNc \NNc^{\frac{1}{2}}\, G \,\NNc^{-\frac{1}{2}} \gamma^{-\NNc}\, \overline{P} = (2\lambda)^{-1} P^{-1} C\, \gamma^{-2\NNc} \, \overline{P}
\eeq
and, thus, 
\beq\label{Cformula}
G=2\lambda \,\NNc^{-\frac{1}{2}} \gamma^{-\NNc}\,P\,\NN^{(1/2)}\,\overline{P}^{-1}\,\gamma^{\NNc} \NNc^{\frac{1}{2}}.
\eeq
It then follows that
\beq\label{Cformula2}
\overline{G} G = 4\lambda^2\, \NNc^{-\frac{1}{2}} \gamma^{-\NNc} \,\overline{P}\, \overline{\NN^{(1/2)}}\,\NN^{(1/2)}\,\overline{P}^{-1}\gamma^{\NNc} \NNc^{\frac{1}{2}}.
\eeq
Substituting \eqnref{Cformula2} into \eqnref{pntwop} (also using \eqnref{commonfactor}), we derive
\begin{align}\notag
\mathbb{N}^{(2)} 
= &8\pi \lambda \,\overline{P}^{-1} \gamma^{\NNc} \NNc^{\frac{1}{2}} \left( I - \overline{G} G \right) \left( 4\lambda^2 I - \overline{G} G \right)^{-1}\NNc^{\frac{1}{2}}\, \gamma^{\NNc} \,(P^T)^{-1}\\ \notag
= &\frac{2\pi}{\lambda} \left( I - 4\lambda^2\, \overline{\NN^{(1/2)}}\,\NN^{(1/2)} \right) \left(  I - \overline{\NN^{(1/2)}}\,\NN^{(1/2)} \right)^{-1}\,\overline{P}^{-1}\gamma^{2\NNc} (\NNc^{\frac{1}{2}} )^2 \,(P^T)^{-1}.
\end{align}
This completes the proof.
\end{proof}

We now generalize the formula for the extreme conductivity case in Theorem \ref{thm:exact0} to the arbitrary conductivity case in terms of $\widetilde{\NN}^{(2)}$. If the GPTs of the inclusion are fully given, we can recover the exterior conformal mapping of the inclusion.

\begin{theorem}\label{conformalrecovery}
Let $\Om$ be a simply connected, planar $C^{1,\alpha}$ domain with arbitrary constant conductivity $\sigma_c$ satisfying $0<\sigma_c\neq\sigma_m<\infty$  (that is, $\lambda = \frac{\sigma_c + \sigma_m}{2(\sigma_c - \sigma_m)}$ is an arbitrary real number satisfying $|\lambda|>\frac{1}{2}$). 
Let $\widetilde{\NN}^{(j)}=\widetilde{\NN}^{(j)}(\Om,\lambda)$, $j=1,2$, be given by \eqnref{def:tildeNN}. Then,
\begin{itemize}
\item[\rm(a)]

$\lambda$ satisfies the implicit equation
\beq\label{lambda:GPTs}
\lambda  = \pi\,\frac{ \widetilde{\NN}^{(2)}_{11} \, \widetilde{\NN}^{(2)}_{22} -  \widetilde{\NN}^{(2)}_{12} \, \widetilde{\NN}^{(2)}_{21} }{\left( \widetilde{\NN}^{(2)}_{11}\right)^3}, \mbox{ and}
\eeq

\item[\rm(b)]
the exterior conformal mapping coefficients associated with $\Om$ satisfy the explicit formulas
\beq\label{conformal:GPTs}
\begin{aligned}
\gamma^2 &= \frac{\lambda}{2\pi} \widetilde{\NN}^{(2)}_{11}(\Om,\lambda), \quad a_0 = \frac{\widetilde{\NN}^{(2)}_{12}(\Om,\lambda)}{2 \widetilde{\NN}^{(2)}_{11}(\Om,\lambda)}, \\\quad a_m &= \frac{\lambda^2}{\pi m} \sum_{n=1}^m p_{mn} \widetilde{\NN}^{(1)}_{n1}(\Om,\lambda), \quad m\ge 1,
\end{aligned}
\eeq
where $p_{m1}, p_{m2}, \cdots, p_{mm}$ denote the coefficients of $F_m(z)$ of $\Om$ as defined in \eqnref{Fm:pmn}. 
In particular, each $a_m$ is uniquely determined by $\lambda$, $\widetilde{\NN}_{12}^{(2)}$ and $\{\widetilde{\NN}_{n1}^{(1)}\}_{1\le n\le m}$.
\end{itemize} 
\end{theorem}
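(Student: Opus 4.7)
The plan is to evaluate the entries of $\widetilde{\NN}^{(1)}$ and $\widetilde{\NN}^{(2)}$ that appear on the right-hand sides of \eqnref{lambda:GPTs} and \eqnref{conformal:GPTs} by direct bookkeeping, using the factorization of $\widetilde{\NN}^{(2)}$ given in Lemma \ref{lemma:tildeN2} together with the analogous factorization of $\widetilde{\NN}^{(1)}$ obtained by combining \eqnref{Nhalf} and Lemma \ref{lemma:tildeN2}. The whole argument reduces to exploiting the triangular structure of $P$.

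First I would record the structure of $P$ coming from \eqnref{inititalP}: $P$ is lower triangular with unit diagonal and $p_{(m+1)m}=-(m+1)a_0$, so $P^{-1}$ is lower triangular with unit diagonal and $(\overline{P}^{-1})_{(m+1)m}=(m+1)\overline{a_0}$, while $(P^T)^{-1}$ is upper triangular with unit diagonal and $((P^T)^{-1})_{m(m+1)}=(m+1)a_0$. The two key consequences are that the first row of $\overline{P}^{-1}$ is $(1,0,0,\dots)$ and the first column of $(P^T)^{-1}$ is $(1,0,0,\dots)^T$. Applied to Lemma \ref{lemma:tildeN2}, which gives $\widetilde{\NN}^{(2)}=\tfrac{2\pi}{\lambda}\overline{P}^{-1}\gamma^{2\NNc}\NNc(P^T)^{-1}$, the principal $2\times 2$ block collapses to
\[
\widetilde{\NN}^{(2)}_{11}=\tfrac{2\pi\gamma^2}{\lambda},\qquad
\widetilde{\NN}^{(2)}_{12}=\tfrac{4\pi a_0\gamma^2}{\lambda},\qquad
\widetilde{\NN}^{(2)}_{21}=\tfrac{4\pi\overline{a_0}\gamma^2}{\lambda},\qquad
\widetilde{\NN}^{(2)}_{22}=\tfrac{4\pi\gamma^4}{\lambda}+\tfrac{8\pi|a_0|^2\gamma^2}{\lambda}.
\]
The first two entries immediately yield the formulas for $\gamma^2$ and $a_0$ in \eqnref{conformal:GPTs}. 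For part (a), the $|a_0|^2$ contributions cancel in the numerator $\widetilde{\NN}^{(2)}_{11}\widetilde{\NN}^{(2)}_{22}-\widetilde{\NN}^{(2)}_{12}\widetilde{\NN}^{(2)}_{21}=8\pi^2\gamma^6/\lambda^2$, and dividing by $(\widetilde{\NN}^{(2)}_{11})^3=8\pi^3\gamma^6/\lambda^3$ gives $\lambda/\pi$, which is exactly \eqnref{lambda:GPTs}.

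The remaining formula for $a_m$ follows once I combine \eqnref{Nhalf} with Lemma \ref{lemma:tildeN2} to compute
\[
\widetilde{\NN}^{(1)}=\NN^{(1/2)}\widetilde{\NN}^{(2)}=\frac{\pi}{\lambda^2}\,P^{-1}C\NNc\,(P^T)^{-1},
\]
where the factors $\gamma^{-2\NNc}$ and $\gamma^{2\NNc}$ cancel and the common Neumann--Poincar\'e factor $[I+(1-4\lambda^2)(4\lambda^2I-\overline{G}G)^{-1}]$ is absent by the very construction of $\MM$. Using $((P^T)^{-1})_{k,1}=\delta_{k1}$ and the identity $c_{j,1}=ja_j$ from \eqnref{GC:recur}, the first column simplifies to
\[
\widetilde{\NN}^{(1)}_{n,1}=\frac{\pi}{\lambda^2}\sum_{j=1}^{n}(P^{-1})_{n,j}\,j\,a_j.
\]
Multiplying by $p_{mn}$, summing over $n\le m$, and invoking $PP^{-1}=I$ collapses the double sum to $\tfrac{\pi}{\lambda^2}\,m\,a_m$, which is exactly the inversion formula for $a_m$ in \eqnref{conformal:GPTs}.

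The main conceptual point---and what makes the argument work at all---is the cancellation of the $\lambda$-dependent common factor $(I-\overline{G}G)(4\lambda^2I-\overline{G}G)^{-1}$ appearing in both $\NN^{(1)}$ and $\NN^{(2)}$, which is engineered by the modification \eqnref{def:tildeNN} and already absorbed in Lemma \ref{lemma:tildeN2}. Beyond that, there is no real obstacle: once that cancellation is in place, every step is a routine verification about triangular matrices, and the telescoping in the $a_m$ step is essentially the inversion identity for Faber polynomials $z^m=\sum_n(P^{-1})_{mn}F_n(z)$ written in matrix form. The only care required is in checking that the truncated sum $\sum_{n=1}^m$ in \eqnref{conformal:GPTs} really equals the full sum $\sum_n$, which is automatic because $p_{mn}=0$ for $n>m$.
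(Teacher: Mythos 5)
Your proposal is correct and takes essentially the same approach as the paper: both arguments rest on Lemma \ref{lemma:tildeN2}, on \eqnref{Nhalf} (so that $\widetilde{\NN}^{(1)}=\NN^{(1/2)}\,\widetilde{\NN}^{(2)}$), on the unit lower-triangular structure of $P$ with $p_{21}=-2a_0$, and on the identity $c_{m1}=ma_m$ from \eqnref{GC:recur}. The only cosmetic difference is the direction of the bookkeeping: the paper inverts Lemma \ref{lemma:tildeN2} to write $\gamma^{2\NNc}=\frac{\lambda}{2\pi}\overline{P}\,\widetilde{\NN}^{(2)}P^T\NNc^{-1}$ (and similarly isolates $C$) and reads off the $(1,1)$, $(1,2)$, $(2,2)$ entries, whereas you compute the entries of $\widetilde{\NN}^{(1)}$ and $\widetilde{\NN}^{(2)}$ in terms of $\gamma$, $a_0$, $C$ and verify the same formulas.
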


\begin{proof}
From Lemma \ref{lemma:tildeN2}, we have
\beq\label{gammatwoNN}
\gamma^{2\NNc}
= \frac{\lambda}{2\pi} \overline{P}\, \widetilde{\NN}^{(2)} P^T \NNc^{-1}.
\eeq
From \eqnref{inititalP} and \eqnref{gammatwoNN}, we compute the $(1,1)$- and $(1,2)$-elements of $\gamma^{2\NNc}$:
\begin{align}
\gamma^2 &= \big[ \gamma^{2\NNc} \big]_{11} = \frac{\lambda}{2\pi}\, \widetilde{\NN}^{(2)}_{11},\label{gammasquare}\\[2mm]
0 &= \big[ \gamma^{2\NNc} \big]_{12}= \frac{\lambda}{4\pi}\big[\, \overline{P} \,\widetilde{\NN}^{(2)} P^T\, \big]_{12}= \frac{\lambda}{4\pi} \,\widetilde{\NN}^{(2)}_{12} -\frac{\lambda }{2\pi}a_0\, \widetilde{\NN}^{(2)}_{11},\notag
\end{align}
and this implies that
\beq\label{center}
a_0 = \frac{ \widetilde{\NN}^{(2)}_{12}}{2 \widetilde{\NN}^{(2)}_{11}}.
\eeq
Let us continue to find $a_m$. We now substitute \eqnref{gammatwoNN} into \eqnref{Nhalf} and get
\beq\notag
C = \frac{\lambda^2}{\pi} P\, \NN^{(1/2)} \, \widetilde{\NN}^{(2)}\, P^T \NNc^{-1}.
\eeq
From \eqnref{GC:recur}, we obtain
\be 
a_m =  \frac{c_{m1}}{m} = \frac{\lambda^2}{\pi m} \sum_{n=1}^m p_{mn} \big[ \NN^{(1/2)} \widetilde{\NN}^{(2)} \big]_{n1}\quad\mbox{for each }m\geq1.
\ee

For the constraint equation of $\lambda$, we compute the $(2,2)$-element of \eqnref{gammatwoNN} by using \eqnref{inititalP}, \eqnref{gammasquare}, and \eqnref{center}:
\begin{align*}
\frac{\lambda^2}{4\pi^2} \big( \widetilde{\NN}^{(2)}_{11}\big)^2 
&= \big[ \gamma^{2\NNc} \big]_{22}\\
&= \frac{\lambda}{4\pi} \big[\overline{P}\,  \widetilde{\NN}^{(2)} P^T \big]_{22}\\
&= \frac{\lambda}{4\pi} \left( 4a_0\overline{a_0}\,  \widetilde{\NN}^{(2)}_{11} -2a_0 \widetilde{\NN}^{(2)}_{21} -2\overline{a_0}\, \widetilde{\NN}^{(2)}_{12} +  \widetilde{\NN}^{(2)}_{22} \right)\\
&= \frac{\lambda}{4\pi} \left(  \widetilde{\NN}^{(2)}_{22} - \frac{ \widetilde{\NN}^{(2)}_{12}}{ \widetilde{\NN}^{(2)}_{11}}\,  \widetilde{\NN}^{(2)}_{21} \right).
\end{align*}
Since $\lambda$ is nonzero, we finally get \eqnref{lambda:GPTs}. 
\end{proof}
We note that the modified GPTs are defined by using $\lambda$ as well as the GPTs. 
The right-hand side of \eqnref{lambda:GPTs} also depends on $\lambda$. We can numerically find the value of $\lambda$ by an iterative algorithm as will be shown in Section \ref{sec:numerical}.  
Then, the conformal radius $\gamma$ and the coefficients $a_m$ follow from $\widetilde{\NN}^{(1)}$, $\widetilde{\NN}^{(2)}$ and the computed value of $\lambda$.

\begin{remark}
According to Theorem \ref{conformalrecovery}, when the inclusion has finite conductivity, we need to know all the values of the GPTs to find $a_m$ for a fixed $m$. However, if the conductivity is extreme, i.e., $\sigma_c=0$ or $\infty$, we only need the GPTs of finitely many indices as in Theorem \ref{thm:exact0}. 
\end{remark}

\section{Extension to a Lipschitz inclusion}

We now generalize Theorems \ref{Factorization:smooth} and \ref{conformalrecovery} to Lipschitz domains that satisfy the following shrinkable property with $s=a_0$. 
\begin{definition}[Star-shaped domain]
A set $D\subset\RR^2$ is called a star-shaped domain with respect to a point $s_0\in D$ if $\mu(D -s_0) + s_0 \Subset D$ for all $\mu \in [0,1)$.
\end{definition}

For a closed Jordan curve $\Gamma$ in $\CC$, we say that $\Gamma $ is smooth if it admits a parameterization $z(t):[0,2\pi)\rightarrow\CC$ such that $z'(t)$ is continuous and $\neq 0$, following the definition in \cite[Chapter 3.2]{Pommerenke:1992:BBC}. 
A piecewise smooth Jordan curve without cusps is quasiconformal (refer to \cite{Ahlfors:1963:QR} and \cite[Chapter 5.4] {Pommerenke:1992:BBC} for the characterization of a quasiconformal curve). 
According to \cite[Theorem 9.14]{Pommerenke:1975:UF}, 
it holds that $\left\lVert G \right\rVert_{l^2\rightarrow l^2} \le \kappa$ for some $\kappa\in[0,1)$ if and only if $\p \Om$ is quasiconformal.
In particular, the matrix $4\lambda^2 I - \overline{G} G$ is invertible for all $|\lambda|\geq \frac{1}{2}$.

The following theorems are the main results in this section. We provide the proof of Theorem \ref{conformal2GPT} at the end of Subsection \ref{proof:thm:F1F2Lipschitz}. 
\begin{theorem}[Factorizations of the GPTs for a Lipschitz inclusion] \label{conformal2GPT} 
Let $\Om\subset\RR^2$ be a simply connected, bounded, and Lipschitz domain with arbitrary constant conductivity $\sigma_c$ satisfying $0<\sigma_c\neq\sigma_m<\infty$,
where $\p\Om$ is a piecewise smooth Jordan curve without cusps.
Assume that $\Om$ is a star-shaped domain with respect to $a_0$,  where $a_0$ is the constant coefficient of the conformal mapping $\Psi$ corresponding to $\Om$.
Then the GPTs of $\Om$ admit the matrix factorizations \eqnref{pnonep} and \eqnref{pntwop}. 
\end{theorem}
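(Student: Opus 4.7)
The plan is to deduce the factorizations for a Lipschitz $\Omega$ from the smooth case of Theorem \ref{Factorization:smooth} by an outer conformal-map approximation, and then to pass to the limit. The star-shaped hypothesis will enter only at the level of domain-perturbation analysis of the Neumann--Poincar\'e operator.

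For each $\rho>\gamma$ define $\Omega^{(\rho)}$ to be the bounded component of $\CC\setminus\Psi(\{|w|=\rho\})$. Because $\Psi$ is analytic on $\{|w|>\gamma\}$, the boundary $\p\Omega^{(\rho)}=\Psi(\{|w|=\rho\})$ is an analytic Jordan curve, so $\Omega^{(\rho)}$ is a $C^{1,\alpha}$ (in fact $C^\infty$) domain with $\Omega\subset\Omega^{(\rho)}$ and $\Omega^{(\rho)}\searrow\Omega$ as $\rho\to\gamma^+$. Since the restriction of $\Psi$ to $\{|w|>\rho\}$ is the normalized exterior conformal map of $\Omega^{(\rho)}$, the approximant shares with $\Omega$ the coefficients $a_0,a_1,\ldots$, the Faber polynomials $F_m$, the lower-triangular matrix $P$, and the Grunsky coefficients $c_{mn}$; only the conformal radius differs, being $\rho$ for $\Omega^{(\rho)}$. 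Consequently the symmetrized Grunsky matrix of $\Omega^{(\rho)}$ is $G^{(\rho)}=(\gamma/\rho)^{\NNc}\,G\,(\gamma/\rho)^{\NNc}$, which converges strongly on $\ell^2$ to $G$ as $\rho\to\gamma^+$. Applying Theorem \ref{Factorization:smooth} to $\Omega^{(\rho)}$ then yields the identities \eqnref{pnonep}--\eqnref{pntwop} with $(\gamma,G)$ replaced by $(\rho,G^{(\rho)})$.

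The proof thus reduces to taking $\rho\to\gamma^+$ entrywise on both sides. For the right-hand side, the quasiconformality of $\p\Omega$ (guaranteed by its being a piecewise smooth Jordan curve without cusps) yields $\|G\|_{\ell^2\to\ell^2}\le\kappa<1$ via \cite[Theorem~9.14]{Pommerenke:1975:UF}, and the same bound holds uniformly in $\rho$ for $G^{(\rho)}$. Combined with $|\lambda|>\tfrac{1}{2}$, and hence $4\lambda^2>1>\kappa^2$, this makes $4\lambda^2 I-\overline{G^{(\rho)}}G^{(\rho)}$ uniformly invertible with $(4\lambda^2 I-\overline{G^{(\rho)}}G^{(\rho)})^{-1}\to(4\lambda^2 I-\overline{G}G)^{-1}$ strongly on $\ell^2$ (by a Neumann-series argument). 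Together with the fixed triangular matrix $P$ and the continuity of the scalar factors $\rho^{\NNc}$ and $\NNc^{1/2}$, which enter only through finite sums because $P^{-1}$ and $(P^T)^{-1}$ are triangular, this gives entrywise convergence of the right-hand sides to the corresponding expressions for $\Omega$.

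For the left-hand side one must establish $\mathbb{N}^{(j)}_{mn}(\Omega^{(\rho)},\lambda)\to\mathbb{N}^{(j)}_{mn}(\Omega,\lambda)$ for each $j=1,2$ and each fixed $m,n$. I would parametrize $\p\Omega^{(\rho)}$ and $\p\Omega$ by $\theta\mapsto\Psi(\rho e^{i\theta})$ and $\theta\mapsto\Psi(\gamma e^{i\theta})$ respectively (the latter via Carath\'eodory's extension), pull the defining integrals \eqnref{def:NN} back to the common circle $[0,2\pi)$, and show that the pulled-back kernels and source terms converge in $L^2([0,2\pi))$. The star-shaped assumption with respect to $a_0$ supplies a uniform cone condition at $\p\Omega$ and hence Lipschitz-uniform convergence of the pulled-back normals and tangents; combined with the $L^2$-invertibility of $\lambda I-\KstarD$ on $L^2_0$-type spaces for Lipschitz boundaries \cite{Escauriaza:1992:RTW,Verchota:1984:LPR} applied uniformly along $\{\Omega^{(\rho)}\}$, this yields strong convergence $(\lambda I-\KstarD)^{-1}[\p P_m/\p\nu]\to(\lambda I-\KstarOmega)^{-1}[\p P_m/\p\nu]$ and hence of each scalar $\mathbb{N}^{(j)}_{mn}$. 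I expect this last step to be the main obstacle: the essential spectrum of $\KstarOmega$ on a general Lipschitz domain is nontrivial, so norm resolvent convergence of the family $\{\KstarD\}_{D=\Omega^{(\rho)}}$ to $\KstarOmega$ fails without further geometric input; the star-shaped hypothesis is precisely the ingredient that replaces norm convergence by quantitative strong convergence of the pulled-back operators, which is all that is needed to pass to the limit entrywise. Once both sides of the factorizations for $\Omega^{(\rho)}$ have been shown to converge, the limit $\rho\to\gamma^+$ yields \eqnref{pnonep}--\eqnref{pntwop} for $\Omega$.
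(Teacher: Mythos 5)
Your treatment of the right-hand sides is essentially the paper's: your $\Omega^{(\rho)}$ are exactly the outer analytic approximants $\Omega_\epsilon$ used there, the uniform Grunsky bound $\|G\|\le\kappa<1$ from quasiconformality and the uniform invertibility of $4\lambda^2I-\overline{G_\epsilon}G_\epsilon$ are the content of Lemma \ref{GGbound}, and your entrywise passage to the limit corresponds to Lemma \ref{GepsilonG_bound} and Proposition \ref{lim:F:ep}. The genuine gap is in the left-hand sides. You reduce everything to proving $\NN^{(j)}_{mn}(\Omega^{(\rho)},\lambda)\to\NN^{(j)}_{mn}(\Om,\lambda)$ via strong convergence of the pulled-back resolvents $(\lambda I-\Kcal^*_{\p\Omega^{(\rho)}})^{-1}$ to $(\lambda I-\KstarOmega)^{-1}$, and you yourself identify this as the main obstacle without supplying an argument. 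For a Lipschitz limit domain this is precisely the hard analytic point (the NP operator has nontrivial essential spectrum, the approximating boundaries and the limit boundary live on different curves, and one needs convergence in a sense strong enough to integrate $P_n$ against the transformed densities on varying boundaries); asserting that the star-shaped hypothesis supplies a ``uniform cone condition'' that yields quantitative strong convergence is a hope, not a proof, and no such perturbation result is invoked or established. As written, the proposal therefore does not close.

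The paper avoids this operator-convergence problem entirely, and it uses the star-shaped hypothesis in a different way than you guessed. Star-shapedness with respect to $a_0$ guarantees that the rescaled domain $\Om^\delta=(1-\delta)(\Om-a_0)+a_0$ is compactly contained in $\Om$, so that its analytic outer approximants $\Om^\delta_\epsilon$ give \emph{inner} analytic approximants of $\Om$, yielding the sandwich $\Omega^\delta_\epsilon\subsetneq\Omega\subsetneq\Omega_\epsilon$ in \eqnref{Omega:inclu}. Since rescaling does not change $G$ (equation \eqnref{G_scaling_invariant}), the smooth-case factorizations for $\Om_\epsilon$ and $\Om^\delta_\epsilon$ have the same limit as $\epsilon,\delta\to0^+$ (equation \eqnref{limit_order}). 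The limit is then forced on $\Om$ itself not by resolvent convergence but by shape monotonicity: harmonic combinations of GPTs are monotone in the domain (Lemma \ref{monotonicity}), hence so are the real combinations $\mathbb{A}^{(\pm j)}_{mn}$ of FPTs in Lemma \ref{diagonalmonotone}, and squeezing these between the inner and outer approximants pins down $\FF^{(j)}_{mn}(\Om,\lambda)$, after which the GPT factorizations follow from \eqnref{FPNP} and the invertibility of $P$. If you want to salvage your route, you would either have to prove the resolvent/GPT continuity statement you postulate (a substantial task not available off the shelf for Lipschitz boundaries), or replace that step by this monotonicity-and-squeeze mechanism, which is where the star-shaped assumption actually does its work.
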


In view of the derivation of \eqnref{lambda:GPTs} and \eqnref{conformal:GPTs}, Theorem \ref{conformal2GPT} directly leads to the following result for a Lipschitz inclusion. 
\begin{theorem}\label{thm:recovery:Lip}
Under the same assumptions for $\Om$ as in Theorem \ref{conformal2GPT}, the shape recovery formulas \eqnref{lambda:GPTs} and \eqnref{conformal:GPTs} in Theorem \ref{conformalrecovery} hold. 
\end{theorem}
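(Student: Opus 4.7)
The plan is to observe that Theorem \ref{thm:recovery:Lip} follows by a direct reduction to Theorem \ref{conformal2GPT}, since the derivation of the inversion formulas \eqnref{lambda:GPTs} and \eqnref{conformal:GPTs} from the matrix factorizations \eqnref{pnonep} and \eqnref{pntwop} is purely algebraic and never invokes the boundary regularity of $\partial\Omega$ beyond what guarantees that the relevant matrices are well-defined and invertible. In the smooth case (Theorem \ref{conformalrecovery}), the factorizations came from Theorem \ref{Factorization:smooth}; under the Lipschitz hypotheses here, Theorem \ref{conformal2GPT} supplies exactly the same factorizations, so every algebraic step below can be reproduced verbatim.

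Concretely, I would first verify that Lemma \ref{lemma:tildeN2} continues to hold under the Lipschitz assumption. Its proof only combines the two factorizations \eqnref{pnonep} and \eqnref{pntwop} to derive $\NN^{(1/2)} = (2\lambda)^{-1} P^{-1} C\gamma^{-2\NNc}\overline{P}$ as in \eqnref{Nhalf}, then uses the algebraic identity \eqnref{commonfactor} together with \eqnref{Cformula2} to rewrite $\NN^{(2)}$ in terms of $\overline{\NN^{(1/2)}}\NN^{(1/2)}$. All of this is matrix algebra. The only non-algebraic ingredients needed are that $\overline{P}$, $P^T$, $\NN^{(2)}$, and $4\lambda^2 I - \overline{G}G$ are invertible; the first two follow from \eqnref{def:P}, the last is guaranteed by the quasiconformality of $\partial\Omega$ together with Pommerenke's theorem (yielding $\|G\|_{l^2\to l^2}<1$), and the invertibility of $\NN^{(2)}$ follows from the factorization \eqnref{pntwop} once the other three invertibilities are established.

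With Lemma \ref{lemma:tildeN2} in hand, I would then replicate the computations in the proof of Theorem \ref{conformalrecovery}. Starting from $\gamma^{2\NNc} = \frac{\lambda}{2\pi}\overline{P}\,\widetilde{\NN}^{(2)} P^T \NNc^{-1}$, the $(1,1)$ and $(1,2)$ entries (evaluated using the explicit form of $P$ from \eqnref{inititalP}, \eqnref{def:P}) immediately yield the formulas for $\gamma^2$ and $a_0$ in \eqnref{conformal:GPTs}. Substituting this expression for $\gamma^{2\NNc}$ back into \eqnref{Nhalf} gives $C = \frac{\lambda^2}{\pi}P\,\NN^{(1/2)}\widetilde{\NN}^{(2)} P^T \NNc^{-1}$, from which the Grunsky recursion $a_m = c_{m1}/m$ in \eqnref{GC:recur} supplies the formula for $a_m$ in terms of the coefficients $p_{mn}$. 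Finally, comparing the $(2,2)$ entry of $\gamma^{2\NNc}$ computed two ways (once using $\gamma^2 = \frac{\lambda}{2\pi}\widetilde{\NN}^{(2)}_{11}$, giving $\gamma^4 = \frac{\lambda^2}{4\pi^2}(\widetilde{\NN}^{(2)}_{11})^2$, and once via the direct matrix product) produces the implicit equation \eqnref{lambda:GPTs} after simplification using the already-known value of $a_0$.

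The main obstacle has already been absorbed into Theorem \ref{conformal2GPT}: it is the extension of the layer-potential/conformal-mapping machinery to domains whose boundary is only Lipschitz (with the star-shapedness hypothesis with respect to $a_0$ and the quasiconformality of $\partial\Omega$ providing the necessary spectral control on $\overline{G}G$). Granted that extension, the present theorem is a clean algebraic corollary, and the proof amounts to the single line that all computations in the proofs of Lemma \ref{lemma:tildeN2} and Theorem \ref{conformalrecovery} depend on the domain only through the factorizations \eqnref{pnonep}--\eqnref{pntwop}, which are furnished by Theorem \ref{conformal2GPT}.
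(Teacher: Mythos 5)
Your proposal is correct and follows essentially the same route as the paper, which likewise treats this theorem as an immediate consequence of Theorem \ref{conformal2GPT}: since the derivations in Lemma \ref{lemma:tildeN2} and Theorem \ref{conformalrecovery} use the domain only through the factorizations \eqnref{pnonep}--\eqnref{pntwop}, they carry over verbatim once those factorizations are established for the Lipschitz, star-shaped case. Your added remarks on the invertibility of $P$, $P^T$, $\NN^{(2)}$, and $4\lambda^2 I-\overline{G}G$ (via quasiconformality and Pommerenke's bound $\lVert G\rVert<1$) are consistent with the paper's setup.
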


\subsection{Shape monotonicity of the GPTs}\label{sec:mono:GPT}
Harmonic combinations of the GPTs admit shape monotonicity:
\begin{lemma}[\cite{Ammari:2005:PTT}]\label{monotonicity}
Let $\Omega' \subsetneq \Omega$ and $J$ be a finite multi-index set. Let $a_\alpha$ be real-valued constant coefficients such that $h(x)=\sum_{\alpha\in J} a_\alpha x^\alpha$ is a harmonic polynomial. Then, we have
\begin{align}\label{monotone1}
\sum_{\alpha,\beta\in J} a_\alpha a_\beta M_{\alpha \beta}(\Omega,\lambda)& > \sum_{\alpha,\beta\in J} a_\alpha a_\beta M_{\alpha \beta}(\Omega',\lambda) \quad \mbox{if } \lambda > \frac{1}{2},\\
\label{monotone2}
\sum_{\alpha,\beta\in J} a_\alpha a_\beta M_{\alpha \beta}(\Omega,\lambda) &< \sum_{\alpha,\beta\in J} a_\alpha a_\beta M_{\alpha\beta}(\Omega',\lambda) \quad \mbox{if } \lambda < -\frac{1}{2}.
\end{align}
\end{lemma}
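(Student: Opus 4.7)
The plan is to recast the weighted sum $\sum_{\alpha,\beta}a_\alpha a_\beta M_{\alpha\beta}(\Om,\lambda)$ as a Dirichlet-type energy associated with the transmission problem \eqnref{transmission} and then compare the energies on the nested domains $\Om$ and $\Om'$ by a variational argument.

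First, I would fix the harmonic polynomial $h(x)=\sum_{\alpha\in J}a_\alpha x^\alpha$ and let $u$ be the solution of \eqnref{transmission} with background $H=h$. The definition of $M_{\alpha\beta}$ combined with bilinearity gives
$$\sum_{\alpha,\beta\in J}a_\alpha a_\beta M_{\alpha\beta}(\Om,\lambda)=\int_{\p\Om}h(y)(\lambda I-\Kcal_{\p\Om}^*)^{-1}\Big[\pd{h}{\nu}\Big](y)\, d\sigma(y).$$
Setting $\varphi=(\lambda I-\Kcal_{\p\Om}^*)^{-1}[\p h/\p\nu]$, using the single-layer representation $u-h=\SingleOmega[\varphi]$ in $\RR^2\setminus\overline{\Om}$, Green's identity, and the jump relations of the single-layer potential, I would convert this boundary integral into the bulk identity
$$\sum_{\alpha,\beta\in J}a_\alpha a_\beta M_{\alpha\beta}(\Om,\lambda) = \kappa\,(\sigma_c-\sigma_m)\int_{\Om}\nabla u\cdot\nabla h\, dx$$
for an explicit positive constant $\kappa$. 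A further integration by parts using the transmission conditions rewrites this as the energy excess $\int_{\RR^2}\sigma(x)|\nabla u|^2\,dx-\int_{\RR^2}\sigma_m|\nabla h|^2\,dx$, where $\sigma(x)=\sigma_c$ in $\Om$ and $\sigma(x)=\sigma_m$ outside.

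Next, I would invoke the standard variational principle: $u$ uniquely minimizes this weighted Dirichlet energy among all $w$ with $w-h$ decaying at infinity when $\sigma_c>\sigma_m$ (i.e.\ $\lambda>\tfrac{1}{2}$), and uniquely maximizes the dual energy (in terms of fluxes weighted by $1/\sigma$) when $\sigma_c<\sigma_m$ (i.e.\ $\lambda<-\tfrac{1}{2}$). Since $\Om'\subsetneq\Om$, the admissible class for the $\Om$-problem contains the $\Om'$-solution $u'$, suitably lifted so as to respect the $\Om$-ambient conductivity outside $\Om'$; plugging $u'$ into the energy as a competitor and applying the variational inequality immediately yields the claimed monotone ordering, in the correct direction for each sign of $\lambda\mp\tfrac{1}{2}$.

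The main obstacle will be twofold. First, I must track signs carefully, since the same underlying inequality has to produce opposite orderings in \eqnref{monotone1} and \eqnref{monotone2}; this requires writing the variational principle in its minimization form for $\lambda>\tfrac{1}{2}$ and in its dual (flux/maximization) form for $\lambda<-\tfrac{1}{2}$. Second, I must upgrade the inequality to a strict one; this should follow by observing that equality would force $u\equiv u'$ on the annular region $\Om\setminus\overline{\Om'}$, which combined with the transmission conditions across $\p\Om'$ and the harmonicity of $u$ throughout $\Om$ contradicts $\sigma_c\neq\sigma_m$ via unique continuation.
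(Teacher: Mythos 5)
You should first be aware that the paper contains no proof of this lemma: it is imported verbatim from \cite{Ammari:2005:PTT}, so the only meaningful comparison is with the standard variational proof in that reference, whose outline (energy identity, min/max principles, strictness via unique continuation) is exactly the one you are attempting. Your first reduction is fine: bilinearity gives the boundary-integral expression, and the jump relations do yield $\sum_{\alpha,\beta}a_\alpha a_\beta M_{\alpha\beta}(\Om,\lambda)=\frac{\sigma_c-\sigma_m}{\sigma_m}\int_{\Om}\nabla u\cdot\nabla h\,dx$, i.e.\ $\kappa=1/\sigma_m$. The next step, however, is genuinely wrong: the proposed ``energy excess'' $\int_{\RR^2}\sigma|\nabla u|^2dx-\int_{\RR^2}\sigma_m|\nabla h|^2dx$ is not equal to this quantity. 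Both integrals diverge for every nonconstant harmonic polynomial $h$, and if one interprets the difference as a limit over balls $B_R$, the Green's identity you would use produces boundary terms $\sigma_m\int_{\p B_R}(u-h)(\p_r u+\p_r h)\,d\sigma$ which do \emph{not} vanish as $R\to\infty$; they converge to a quantity of the same order as the GPT combination. Concretely, for a disk with $h=x_1$ the renormalized energy excess equals $0$, while the harmonic combination equals $\frac{2(\sigma_c-\sigma_m)}{\sigma_c+\sigma_m}|\Om|\neq 0$. The correct renormalized functional must be written in terms of the decaying part $w=u-h$; for $\sigma_c>\sigma_m$ one has $\sigma_m\sum a_\alpha a_\beta M_{\alpha\beta}(\Om,\lambda)=\min_{w}\bigl\{(\sigma_c-\sigma_m)\int_{\Om}|\nabla(w+h)|^2dx+\sigma_m\int_{\RR^2}|\nabla w|^2dx\bigr\}$, with the dual (flux/maximization) principle playing this role when $\sigma_c<\sigma_m$. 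This is the form in which the cited proof actually proceeds.

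The second gap is the direction of the comparison. Plugging the $\Om'$-solution into the $\Om$-principle, as you propose, gives $\Sigma(\Om)\le F_{\Om}[u'-h]$ together with $F_{\Om}[u'-h]\ge F_{\Om'}[u'-h]=\Sigma(\Om')$, two inequalities facing opposite ways, hence no conclusion; moreover the admissible class (finite-energy perturbations decaying at infinity) is the same for both problems, so there is no ``lifting'' of $u'$ to perform --- the domains enter only through the functional. The working argument inserts the $\Om$-minimizer $w_\Om=u_\Om-h$ into the $\Om'$-principle: $\Sigma(\Om')\le F_{\Om'}[w_\Om]\le F_{\Om}[w_\Om]=\Sigma(\Om)$, the middle inequality using $\Om'\subset\Om$ and $\sigma_c>\sigma_m$ for a fixed competitor. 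Your strictness idea is salvageable but should be phrased accordingly: equality forces $\nabla u_\Om\equiv 0$ on the nonempty open set $\Om\setminus\overline{\Om'}$, which contradicts real-analyticity of $u_\Om$ in $\Om$ and the nonconstant behavior at infinity (one must also assume $h$ nonconstant, which is how the lemma is used in Lemma \ref{diagonalmonotone}). So the strategy is the right one, but as written the pivotal identity is false and the variational comparison is reversed; both must be repaired along the lines above.
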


The following monotonicity property of the FPTs will be essentially used to prove Theorem \ref{conformal2GPT} in Subsection \ref{proof:thm:F1F2Lipschitz}.
\begin{lemma}\label{diagonalmonotone}
The linear combinations of the FPTs 
\beq\label{def:oper:A}
\begin{aligned}
&\mathbb{A}_{mn}^{(\pm1)}:=\text{Re}\left\{ \FF_{mm}^{(1)} + \FF_{nn}^{(1)} + \FF_{mm}^{(2)} + \FF_{nn}^{(2)}\right\} \pm 2 \text{Re}\left\{\FF_{mn}^{(1)} +\FF_{mn}^{(2)} \right\},\\
&\mathbb{A}_{mn}^{(\pm2)}:=\text{Re}\left\{- \FF_{mm}^{(1)} - \FF_{nn}^{(1)}+ \FF_{mm}^{(2)} + \FF_{nn}^{(2)} \right\} \mp 2 \text{Re}\left\{\FF_{mn}^{(1)} - \FF_{mn}^{(2)} \right\},\\
&\mathbb{A}_{mn}^{(\pm3)}:=\text{Re}\left\{ \FF_{mm}^{(1)} - \FF_{nn}^{(1)} + \FF_{mm}^{(2)} + \FF_{nn}^{(2)} \right\} \mp 2 \text{Im}\left\{\FF_{mn}^{(1)} + \FF_{mn}^{(2)} \right\},\\
&\mathbb{A}_{mn}^{(\pm4)}:=\text{Re}\left\{ -\FF_{mm}^{(1)} + \FF_{nn}^{(1)} + \FF_{mm}^{(2)} + \FF_{nn}^{(2)}\right \} \pm 2 \text{Im}\left\{\FF_{mn}^{(1)} - \FF_{mn}^{(2)}\right\}
\end{aligned}
\eeq
have increasing and decreasing monotonicity with respect to the domain if $\lambda>\frac{1}{2}$ and $\lambda<-\frac{1}{2}$, respectively. 
\end{lemma}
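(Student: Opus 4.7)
The plan is to reduce each of the eight quantities $\mathbb{A}_{mn}^{(\pm j)}$ to a single quadratic form $2\mathcal{M}[h,h]$ where $h$ is a real harmonic polynomial, and then invoke Lemma \ref{monotonicity} directly. Here $\mathcal{M}$ denotes the bilinear pairing
\[
\mathcal{M}[h_1, h_2] := \int_{\partial\Omega} h_2\,(\lambda I - \mathcal{K}^*_{\partial\Omega})^{-1}\!\left[\partial h_1/\partial\nu\right] d\sigma
\]
associated with the GPTs: for real harmonic polynomials $h_j = \sum_\alpha a_\alpha^{(j)} x^\alpha$, one has $\mathcal{M}[h_1, h_2] = \sum_{\alpha,\beta} a_\alpha^{(1)} a_\beta^{(2)} M_{\alpha\beta}(\Omega,\lambda)$, and the well-known symmetry $M_{\alpha\beta} = M_{\beta\alpha}$ forces $\mathcal{M}$ to be symmetric in its two arguments.

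Since each Faber polynomial $F_m$ is holomorphic, the decomposition $F_m = u_m + i v_m$ with $u_m := \Re\{F_m\}$ and $v_m := \Im\{F_m\}$ yields two real harmonic polynomials. Using that $\lambda I - \mathcal{K}^*_{\partial\Omega}$ has real coefficients and therefore commutes with complex conjugation, the separation of real and imaginary parts in Definition \ref{FPT} gives the four basic identities
\begin{align*}
\Re\{\mathbb{F}_{mn}^{(1)} + \mathbb{F}_{mn}^{(2)}\} &= 2\mathcal{M}[u_m, u_n], &
\Re\{\mathbb{F}_{mn}^{(1)} - \mathbb{F}_{mn}^{(2)}\} &= -2\mathcal{M}[v_m, v_n], \\
\Im\{\mathbb{F}_{mn}^{(1)} + \mathbb{F}_{mn}^{(2)}\} &= 2\mathcal{M}[u_m, v_n], &
\Im\{\mathbb{F}_{mn}^{(1)} - \mathbb{F}_{mn}^{(2)}\} &= 2\mathcal{M}[v_m, u_n].
\end{align*}
Specialisation to $m=n$ also supplies the diagonal terms, in particular $\Re\{-\mathbb{F}_{mm}^{(1)} + \mathbb{F}_{mm}^{(2)}\} = 2\mathcal{M}[v_m, v_m]$.

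Substituting these identities into the definitions \eqnref{def:oper:A} and applying the elementary polarization identity $\mathcal{M}[h,h] \pm 2\mathcal{M}[h,k] + \mathcal{M}[k,k] = \mathcal{M}[h\pm k,\, h\pm k]$ collapses each combination to a perfect square:
\begin{align*}
\mathbb{A}_{mn}^{(\pm 1)} &= 2\mathcal{M}[u_m \pm u_n,\, u_m \pm u_n], &
\mathbb{A}_{mn}^{(\pm 2)} &= 2\mathcal{M}[v_m \pm v_n,\, v_m \pm v_n], \\
\mathbb{A}_{mn}^{(\pm 3)} &= 2\mathcal{M}[u_m \mp v_n,\, u_m \mp v_n], &
\mathbb{A}_{mn}^{(\pm 4)} &= 2\mathcal{M}[v_m \pm u_n,\, v_m \pm u_n].
\end{align*}
Because the argument of $\mathcal{M}$ in each case is a real harmonic polynomial, Lemma \ref{monotonicity} applied with $J$ the (finite) multi-index set of that polynomial immediately delivers the strict inequalities $\mathbb{A}_{mn}^{(\pm j)}(\Omega, \lambda) > \mathbb{A}_{mn}^{(\pm j)}(\Omega', \lambda)$ when $\lambda > 1/2$ and the reverse inequality when $\lambda < -1/2$, for $\Omega' \subsetneq \Omega$.

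The only real obstacle is bookkeeping. One must carefully distinguish $\mathcal{M}[u_m, v_n]$ from $\mathcal{M}[v_m, u_n]$ — these coincide only in special cases, since $\mathcal{M}$ is symmetric in its two \emph{slots} but not in the \emph{indices} $m,n$ — and the sign pattern in the eight definitions of \eqnref{def:oper:A} must be tracked so that each case organises into a genuine perfect square. Once the four basic identities above are verified, each case is a one-line expansion.
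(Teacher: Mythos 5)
Your proposal is correct and takes essentially the same route as the paper: each $\mathbb{A}_{mn}^{(\pm j)}$ is rewritten, using the symmetry of the GPTs, as the quadratic form $2\int_{\p\Om} h\,(\lambda I-\Kcal^*_{\p\Om})^{-1}\big[\pd{h}{\nu}\big]\,d\sigma$ for a real harmonic polynomial $h$, to which Lemma \ref{monotonicity} is applied. Your polynomials $u_m\pm u_n$, $v_m\pm v_n$, $u_m\mp v_n$, $v_m\pm u_n$ are exactly the paper's choices $\Re\{F_m\pm F_n\}$, $\Im\{F_m\pm F_n\}$, $\Re\{F_m\pm iF_n\}$, $\Im\{F_m\pm iF_n\}$, so the polarization bookkeeping is just a more explicit rendering of the same argument.
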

\begin{proof}
From the definition of the FPTs and the symmetry of the GPTs, we obtain
\begin{align}
&\text{Re}\left\{ \FF_{mm}^{(1)} + \FF_{nn}^{(1)} + \FF_{mm}^{(2)} + \FF_{nn}^{(2)}\right\} \pm 2 \text{Re}\left\{\FF_{mn}^{(1)} +\FF_{mn}^{(2)} \right\} \notag\\
&=  2\int_{\p\Om}\text{Re}\left\{F_m \pm F_n\right\}\left(\lambda I-\Kcal^*_{\p\Om}\right)^{-1}\left[\pd{}{\nu}\text{Re}\left\{F_m \pm F_n\right\}\right] d\sigma.\notag
\end{align}
From Lemma \ref{monotonicity} with $h = \text{Re}\{F_m \pm F_n\}$, we have the monotonicity for $\text{Re}\{ \FF_{mm}^{(1)} + \FF_{nn}^{(1)} + \FF_{mm}^{(2)} + \FF_{nn}^{(2)}\} \pm 2 \text{Re}\{\FF_{mn}^{(1)} +\FF_{mn}^{(2)} \}$. Similarly, by applying Lemma \ref{monotonicity} with
$h = \text{Im}\{F_m \pm F_n\}, \,\text{Re}\{F_m \pm i F_n\}, \,\text{Im}\{F_m \pm iF_n\}$, we complete the proof.
\end{proof}

\subsection{Proof of matrix factorizations for a Lipschitz inclusion}\label{proof:thm:F1F2Lipschitz}

For $\epsilon>0$, we define
\begin{align}\notag
\p \Omega_\epsilon :&= \left\{ \Psi(w) : |w| = \gamma_\epsilon \right\} \quad \mbox{with } \gamma_\epsilon = \gamma(1+\epsilon),
\end{align}
where $\Psi$ is given by \eqnref{conformal:Psi}. 
Since $\Psi$ is conformal in $\{w:|w|>\gamma\}$, $\Om_\epsilon$ is an analytic domain and $\Omega\subset\Om_\epsilon$. 
We now consider the scaled domain 
\beq\label{Om:delta:def}
\Om^\delta:=(1-\delta) (\Om-a_0)+a_0\Subset \Om\quad\mbox{for }\delta\in(0,1),
\eeq
where the subset relation holds due to the star-shaped condition for $\Om$. 
The exterior conformal mapping of $\Om^\delta$ is
\beq\label{conformal:Psi_delta}
\Psi^\delta(w)=w+a_0+\frac{a_1^\delta}{w}+\frac{a_2^\delta}{w^2}+\cdots \quad \mbox{for } |w|\geq \gamma^\delta
\eeq
with $\gamma^\delta=(1-\delta)\gamma$ and $a_n^\delta = (1-\delta)^{n+1} a_n$. For any $\epsilon>0$, we then set
\begin{align}\label{shrinking}
\p\Omega_{\epsilon}^\delta: &= \big\{ \Psi^\delta(w) : |w| = \gamma_\epsilon^\delta \big\} \quad \mbox{with } \gamma_\epsilon^\delta = \gamma^\delta(1+\epsilon).
\end{align}
 Note that $\Om_\epsilon$ and $\Om^\delta_\epsilon$ are simply connected analytic domains and that
$$\Om^\delta_\epsilon=(\Om^\delta)_\epsilon=(\Om_\epsilon)^\delta.$$

 For a fixed $\delta\in(0,1)$, it follows from \eqnref{Om:delta:def} that (see Figure \ref{swellshrink})
\begin{gather}\label{Omega:inclu}
\Omega_{\epsilon}^\delta \subsetneq \Omega \subsetneq \Omega_\epsilon \quad\mbox{for a sufficiently small }\epsilon>0,\\
\label{Omega:conv:ep}
 \Omega_\epsilon \downarrow \Omega \quad \mbox{and} \quad  \Omega_{\epsilon}^\delta \downarrow \Omega^\delta    \quad \mbox{as } \epsilon\to 0^+.
\end{gather}

\begin{figure}[h!]
\centering
\includegraphics[width=0.53\textwidth, trim={0 2cm 0 2cm}]{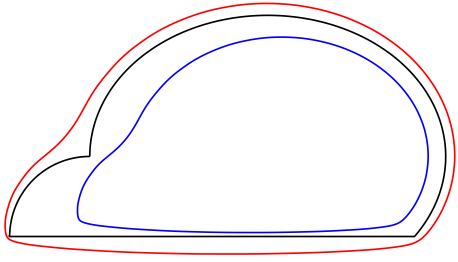}
\caption{The blue, black, and red curves indicate $\p \Omega_\epsilon^\delta$, $\p \Omega$, and $\p \Omega_\epsilon$ for a cap-shaped domain  $\Omega$, respectively, where $\Om$ is a star-shaped domain with respect to $a_0$.}
\label{swellshrink}
\end{figure}

Since $\Omega_{\epsilon}$ is an analytic domain, \eqnref{FPT:factor} holds for $\Omega_{\epsilon}$, that is,
\beq\label{eF}
\begin{aligned}
&\FF^{(1)}(\Omega_{\epsilon},\lambda) = 4\pi\, \gamma_\epsilon^\NNc \NNc^{\frac{1}{2}} \, G_\epsilon \left[ I+ (1-4\lambda^2) \left( 4\lambda^2 I - \overline{G_\epsilon} G_\epsilon \right)^{-1} \right] \NNc^{\frac{1}{2}} \,\gamma_\epsilon^\NNc, \\
&\FF^{(2)}(\Omega_{\epsilon},\lambda)= 8\pi \lambda \,\gamma_\epsilon^{\NNc} \NNc^{\frac{1}{2}} \left[ I + (1-4\lambda^2) \left( 4\lambda^2 I - \overline{G_\epsilon} G_\epsilon \right)^{-1}  \right] \NNc^{\frac{1}{2}} \,\gamma_\epsilon^{\NNc}.
\end{aligned}
\eeq
Here, $G_\epsilon$ is defined by \eqnref{g} and \eqnref{mat:G:ep} with $\gamma$ replaced by $\gamma_\epsilon$. 
In other words, 
\beq\label{Gep:expression}
G_\epsilon=(1+\epsilon)^{-\NNc}\, G\, (1+\epsilon)^{-\NNc}.
\eeq

One can easily find that rescaling and shifting of a domain do not change $G$. Namely, 
\beq\label{G_scaling_invariant}
G(\Om^\delta)=G(\Om).
\eeq
Since $G(\Om^\delta_\epsilon)$ is generated by the conformal mapping of $\Om^\delta$ with $\gamma_\epsilon^\delta=\gamma^\delta(1+\epsilon)$ instead of $\gamma^\delta$, one can easily find that $$G(\Om^\delta_\epsilon)=G(\Om_\epsilon)=G_\epsilon.$$
Therefore, from \eqnref{FPT:factor}, we have
\beq\label{deF}
\begin{aligned}
&\FF^{(1)}(\Omega_{\epsilon}^\delta,\lambda) = 4\pi\, (\gamma_\epsilon^\delta)^{\NNc} \NNc^{\frac{1}{2}} \,G_\epsilon \left[ I+ (1-4\lambda^2) \Big( 4\lambda^2 I - \overline{G_\epsilon} G_\epsilon  \Big)^{-1} \right] \NNc^{\frac{1}{2}}\, (\gamma_\epsilon^\delta)^{\NNc}, \\
&\FF^{(2)}(\Omega_{\epsilon}^\delta,\lambda)= 8\pi \lambda\, (\gamma_\epsilon^\delta)^{\NNc} \NNc^{\frac{1}{2}} \left[ I + (1-4\lambda^2) \Big( 4\lambda^2 I - \overline{G_\epsilon} G_\epsilon \Big)^{-1}  \right] \NNc^{\frac{1}{2}}\, (\gamma_\epsilon^\delta)^{\NNc},
\end{aligned}
\eeq
where the only difference from \eqnref{eF} and \eqnref{deF} is that $(\gamma_\epsilon^\delta)^{\NNc}$ is used instead of $\gamma_\epsilon^{\NNc}$.

From \eqnref{eF}, \eqnref{deF} and the fact that $\gamma^\delta_{\epsilon}=(1-\delta)\gamma_{\epsilon}$, the components of the FPTs of $\Omega_{\epsilon}^\delta$ converge to those of $\Omega_{\epsilon}$ as $\delta \to 0^+$ when $\epsilon>0$ is fixed. In other words, we have the following lemma. 
\begin{lemma}\label{lemma:F:delta}
For each fixed $\epsilon$, we have
\beq\label{lim:F:delta}
\lim_{\delta\to 0^+} \FF_{mn}^{(j)}(\Omega_{\epsilon}^\delta,\lambda)= \FF_{mn}^{(j)}(\Omega_{\epsilon},\lambda)\quad\mbox{for }j=1,2.
 \eeq
\end{lemma}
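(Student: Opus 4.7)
The plan is direct: exploit the fact that the two matrix factorizations \eqnref{eF} and \eqnref{deF} are literally identical except for the replacement of the diagonal factor $\gamma_\epsilon^{\NNc}$ by $(\gamma_\epsilon^\delta)^{\NNc}$. All of the other ingredients---the diagonal factor $\NNc^{1/2}$, the symmetrized Grunsky matrix $G_\epsilon$, and the bracketed resolvent $[I+(1-4\lambda^2)(4\lambda^2 I-\overline{G_\epsilon}G_\epsilon)^{-1}]$---are independent of $\delta$. This independence is exactly what the paper has already established through the identity $G(\Omega_\epsilon^\delta)=G(\Omega_\epsilon)=G_\epsilon$ just above \eqnref{deF}, which in turn rests on the scaling/translation invariance $G(\Omega^\delta)=G(\Omega)$ recorded in \eqnref{G_scaling_invariant}.

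Using $\gamma_\epsilon^\delta=(1-\delta)\gamma_\epsilon$, the diagonal matrix $(\gamma_\epsilon^\delta)^{\NNc}$ factors as $(1-\delta)^{\NNc}\gamma_\epsilon^{\NNc}$, where $(1-\delta)^{\NNc}$ denotes the diagonal matrix whose $n$th diagonal entry is $(1-\delta)^n$. Since the two diagonal matrices commute, I can pull $(1-\delta)^{\NNc}$ out of both ends of \eqnref{deF}, so that \eqnref{eF} and \eqnref{deF} combine to
\beq\notag
\FF^{(j)}(\Omega_{\epsilon}^\delta,\lambda)=(1-\delta)^{\NNc}\,\FF^{(j)}(\Omega_{\epsilon},\lambda)\,(1-\delta)^{\NNc},\qquad j=1,2.
\eeq
Reading off the $(m,n)$-entry---multiplying row $m$ by $(1-\delta)^m$ and column $n$ by $(1-\delta)^n$---then yields the clean identity
\beq\notag
\FF^{(j)}_{mn}(\Omega_{\epsilon}^\delta,\lambda)=(1-\delta)^{m+n}\,\FF^{(j)}_{mn}(\Omega_{\epsilon},\lambda).
\eeq

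Finally, letting $\delta\to 0^+$ and using $(1-\delta)^{m+n}\to 1$ for each fixed pair $(m,n)$ immediately yields \eqnref{lim:F:delta} for both $j=1$ and $j=2$.

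I do not foresee any real obstacle in this argument; the only point to verify is that every entry appearing in the factorization is a finite complex number, so that the entrywise manipulation above is meaningful. This is automatic because $\p\Omega_\epsilon$ is analytic, which forces $\|G_\epsilon\|_{\ell^2\to\ell^2}<1$, and hence $(4\lambda^2I-\overline{G_\epsilon}G_\epsilon)^{-1}$ is a bounded operator on $\ell^2$ for every $|\lambda|\geq\tfrac{1}{2}$. Beyond this mild bookkeeping, the lemma is essentially a one-line algebraic consequence of the scaling relation between $\gamma_\epsilon^\delta$ and $\gamma_\epsilon$.
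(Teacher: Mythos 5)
Your proposal is correct and follows essentially the same route as the paper: the paper likewise deduces the lemma directly from \eqnref{eF}, \eqnref{deF}, the invariance $G(\Omega_\epsilon^\delta)=G_\epsilon$, and the scaling $\gamma_\epsilon^\delta=(1-\delta)\gamma_\epsilon$, with your entrywise identity $\FF^{(j)}_{mn}(\Omega_\epsilon^\delta,\lambda)=(1-\delta)^{m+n}\FF^{(j)}_{mn}(\Omega_\epsilon,\lambda)$ simply making explicit the convergence the paper asserts. The finiteness check via $\lVert G_\epsilon\rVert<1$ is consistent with Lemma \ref{GGbound} and \eqnref{gnorm}, so no gap remains.
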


In the following, we investigate the convergence of $\FF_{mn}^{(1)}(\Omega_{\epsilon},\lambda)$ and $\FF_{mn}^{(2)}(\Omega_{\epsilon},\lambda)$ as $\epsilon$ goes to $0$ (see Proposition \ref{lim:F:ep}). This is much more difficult to prove than Lemma \ref{lemma:F:delta}. We start with a general property of a semi-infinite matrix.

Let $l^2(\CC)$ denote the vector space consisting of all complex sequences $(x_m)$ satisfying 
$\sum_{m=1}^\infty |x_m|^2<\infty$. 
We can interpret a semi-infinite matrix, namely $T=(t_{mn})$, as a linear operator from $l^2(\CC)$ to $l^2(\CC)$ given by
\begin{align}\label{G:l2}
    (x_m) &\longmapsto (y_m)\quad\mbox{with}\quad y_m = \sum_{n=1}^\infty t_{mn}x_n=\lim_{N\rightarrow\infty}\sum_{n=1}^N t_{mn}x_n,
\end{align}
assuming that the sequence of partial sums converges for each $m$ and that $(y_m)\in l^2(\CC)$. 
We denote by $\|T\|$ the operator norm of $T$ on $l^2(\CC)$, that is, 
$$
\left\lVert T \right\rVert=\left\lVert T \right\rVert_{l^2\to l^2} = \sup_{\left\lVert x \right\rVert=1} \left\lVert Tx \right\rVert.
$$
Assume that $\|T\|<\infty$. 
We put $x_n = \delta_{nk}$ to obtain
\be
\left\lVert T \right\rVert^2 = \sup_{\left\lVert (x_n) \right\rVert=1} \sum_{m=1}^\infty \left| \sum_{n=1}^\infty t_{mn} x_n \right|^2 \ge \sum_{m=1}^\infty \left| t_{mk} \right|^2.
\ee
Let $T^*$ denote the adjoint operator of $T$. Then, $T^*=(s_{mn})_{m,n=1}^\infty$ with $s_{mn}=t_{nm}$ and $\left\lVert T \right\rVert = \left\lVert T^* \right\rVert$.
By applying the above inequality to $T^*$, we have
\beq\label{eqn:l2norm:ineq}
\sum_{m=1}^\infty \left| t_{mk} \right|^2 \le \left\lVert T \right\rVert^2 \quad \mbox{and} \quad \sum_{m=1}^\infty \left| t_{km} \right|^2 \le \left\lVert T \right\rVert^2\quad\mbox{for each }k.
\eeq

The Grunsky coefficients satisfy that (see, for instance, \cite[Chapter 4.5]{Duren:1983:UF})
\begin{equation}\label{ineq:Grunsky}
\sum_{n=1}^\infty \left| \sum_{m=1}^\infty \sqrt{\frac{n}{m}} \frac{c_{mn}}{\gamma^{m+n}} x_m \right|^2 \leq \sum_{m=1}^\infty |x_m|^2
\end{equation}
for all complex sequences $(x_m)$.
From \eqnref{g} and the symmetry of $G$, we then have
\begin{align*}
\left\lVert G \right\rVert^2
& = \sup_{\left\lVert (x_n) \right\rVert=1} \sum_{m=1}^\infty \left| \sum_{n=1}^\infty g_{mn} x_n \right|^2 
= \sup_{\left\lVert (x_n) \right\rVert=1} \sum_{m=1}^\infty \left| \sum_{n=1}^\infty g_{nm} x_n \right|^2\leq 1.
\end{align*} 
Assuming that  $\p\Om$ is a piecewise smooth Jordan curve without cusps, we have (see \cite[Theorem 9.14]{Pommerenke:1975:UF})
\beq\label{gnorm}
\lVert G \rVert \leq \kappa\quad\mbox{for some }\kappa<1.
\eeq

We now consider the operator $G_\epsilon$ (see \eqnref{Gep:expression}) as follows.
\begin{lemma}\label{GGbound}
For all $\epsilon>0$ and $|\lambda|\geq\frac{1}{2}$, 
we have
$$\left\lVert G_\epsilon \right\rVert \le \left\lVert G \right\rVert$$
and 
$$\left\lVert \left( 4\lambda^2 I - \overline{G} G \right)^{-1} \right\rVert,\,
 \left\lVert \left( 4\lambda^2 I - \overline{G_\epsilon} G_\epsilon \right)^{-1} \right\rVert \le \left( 1 - \left\lVert G \right\rVert^2 \right)^{-1}.$$
\end{lemma}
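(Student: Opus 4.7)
The plan is to exploit the fact that conjugation and multiplication by diagonal contractions cannot increase operator norms, combined with the self-adjointness of $\overline{G}G$.

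First I would verify that the diagonal matrix $(1+\epsilon)^{-\NNc}$ (which has $n$th diagonal entry $(1+\epsilon)^{-n}$) is a contraction on $l^2(\CC)$. Since $\epsilon>0$, each entry satisfies $(1+\epsilon)^{-n}\le (1+\epsilon)^{-1}<1$, so $\|(1+\epsilon)^{-\NNc}\|\le 1$. Combining with the factorization $G_\epsilon=(1+\epsilon)^{-\NNc}G(1+\epsilon)^{-\NNc}$ from \eqnref{Gep:expression} and submultiplicativity of the operator norm immediately gives the first inequality $\|G_\epsilon\|\le \|G\|$.

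For the second inequality, the key observation is that $G$ is complex symmetric (by \eqnref{eqn:g:sym}, $g_{mn}=g_{nm}$), so its Hilbert adjoint is $G^*=\overline{G}$ and hence $\overline{G}G=G^*G$ is a self-adjoint, positive semidefinite operator on $l^2(\CC)$ with $\|\overline{G}G\|=\|G\|^2$. By \eqnref{gnorm}, $\|G\|\le\kappa<1$. For $|\lambda|\ge \frac{1}{2}$, we have $4\lambda^2\ge 1 > \|G\|^2$, so the spectrum of $\overline{G}G$ lies in $[0,\|G\|^2]$ and the spectrum of $4\lambda^2 I-\overline{G}G$ lies in $[4\lambda^2-\|G\|^2,\,4\lambda^2]$, which is bounded away from zero. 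By the spectral theorem for bounded self-adjoint operators (or, equivalently, by the Neumann series $(4\lambda^2 I-\overline{G}G)^{-1}=(4\lambda^2)^{-1}\sum_{k\ge 0}(4\lambda^2)^{-k}(\overline{G}G)^k$ which converges in norm since $(4\lambda^2)^{-1}\|G\|^2<1$), I obtain
\[
\bigl\|(4\lambda^2 I-\overline{G}G)^{-1}\bigr\|=\frac{1}{4\lambda^2-\|G\|^2}\le \frac{1}{1-\|G\|^2}.
\]
The same argument applied to $G_\epsilon$ in place of $G$ — using $\|G_\epsilon\|\le \|G\|<1$ from the first part — yields $\|(4\lambda^2 I-\overline{G_\epsilon}G_\epsilon)^{-1}\|\le (4\lambda^2-\|G_\epsilon\|^2)^{-1}\le (1-\|G\|^2)^{-1}$, which finishes the proof.

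There is no real obstacle here; the only thing one must not forget is that $G$ is complex symmetric rather than Hermitian, so one cannot apply the spectral theorem directly to $G$ but must pass to the positive operator $G^*G=\overline{G}G$ first. The uniform bound $1-\|G\|^2$ (rather than the sharper $4\lambda^2-\|G\|^2$) is obtained from the hypothesis $|\lambda|\ge\frac{1}{2}$, and it is this uniform bound that will be needed in the subsequent limiting argument $\epsilon\to 0^+$.
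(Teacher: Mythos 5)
Your proposal is correct and follows essentially the same route as the paper: the first bound comes from the factorization $G_\epsilon=(1+\epsilon)^{-\NNc}G(1+\epsilon)^{-\NNc}$ together with the fact that $(1+\epsilon)^{-\NNc}$ is a contraction, and the second from a Neumann series for $\left(4\lambda^2 I-\overline{G}G\right)^{-1}$ using $\|\overline{G}G\|\le\|G\|^2<1\le 4\lambda^2$. Your extra spectral-theorem remark (giving the exact value $\bigl(4\lambda^2-\|G\|^2\bigr)^{-1}$, which the Neumann series alone only bounds from above) is a harmless refinement that the lemma does not need.
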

\begin{proof}
From \eqnref{Gep:expression}, it is straightforward to find that
\beq\label{Gineq}
\left\lVert G_\epsilon \right\rVert = \left\lVert(1+\epsilon)^{-\NNc} G\, (1+\epsilon)^{-\NNc}\right\rVert \le \left\lVert(1+\epsilon)^{-\NNc}\right\rVert^2 \left\lVert G \right\rVert  \le \left\lVert G \right\rVert.
\eeq
From the assumption $|\lambda|\ge \frac{1}{2}$ and \eqnref{Gineq}, we derive
\begin{align*}
\left\lVert \left( 4\lambda^2 I - \overline{G_\epsilon}G_\epsilon  \right)^{-1} \right\rVert
&\le \frac{1}{4\lambda^2} \sum_{n=0}^\infty \left\lVert \frac{1}{4\lambda^2}\overline{G_\epsilon} G_\epsilon \right\rVert^n 
\le \sum_{n=0}^\infty \|G_\epsilon\|^{2n}\le \left(1-\|G \|^{2}\right)^{-1}.
\end{align*}
Similarly, we have boundedness for $\left(4\lambda^2 I - \overline{G} G \right)^{-1}$ .
\end{proof}

The following lemma is essential in proving the convergence of $\FF_{mn}^{(1)}(\Omega_{\epsilon},\lambda)$ and $\FF_{mn}^{(2)}(\Omega_{\epsilon},\lambda)$ as $\epsilon$ tends to zero. 
\begin{lemma}\label{GepsilonG_bound}
Let $X=\left(x_{mn}\right)_{m,n=1}^\infty$ and $Y=\left(y_{mn}\right)_{m,n=1}^\infty$ be semi-infinite matrices depending on $\epsilon$. Assume that $X$, $Y$ are uniformly bounded with respect to $\epsilon$, i.e., $\left\lVert X \right\rVert, \left\lVert Y \right\rVert \le M<\infty$ for some constant $M$ independent of $\epsilon$. Then, 
the $(m,n)$-component of $X(G_\epsilon - G)Y$ satisfies that 
\be
\lim_{\epsilon \to 0^+}\big[ X (G_\epsilon - G) Y \big]_{mn} = 0\quad\mbox{for each }m,n.
\ee
\end{lemma}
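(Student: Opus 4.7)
\medskip

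\noindent\textbf{Proof proposal for Lemma \ref{GepsilonG_bound}.}
The plan is to recognize the $(m,n)$-entry as a bilinear pairing $u^{T}(G_\epsilon - G)v$ between two fixed $l^2$-vectors, then dominate $G_\epsilon - G$ in two ways: an entrywise pointwise bound that vanishes as $\epsilon\to 0^+$ for each $(k,l)$, and a uniform operator bound needed to truncate the tails.

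First I would extract the relevant sequences. For fixed $m,n$, set $u=(u_k)_{k\ge1}$ with $u_k=x_{mk}$ and $v=(v_l)_{l\ge1}$ with $v_l=y_{ln}$. Inequality \eqref{eqn:l2norm:ineq} applied to $X$ and $Y$ gives
\[
\sum_{k=1}^{\infty}|u_k|^2\le \|X\|^2\le M^2,\qquad \sum_{l=1}^{\infty}|v_l|^2\le \|Y\|^2\le M^2,
\]
so $u,v\in l^2(\CC)$ with norms bounded by $M$ independently of $\epsilon$. Expanding the matrix product, the quantity to analyze is
\[
\bigl[X(G_\epsilon - G)Y\bigr]_{mn}=\sum_{k,l\ge 1} u_k\,\bigl((G_\epsilon - G)\bigr)_{kl}\,v_l=u^{T}(G_\epsilon-G)\,v.
\]

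Next I would establish two bounds on $D_\epsilon:=G_\epsilon-G$. From \eqnref{Gep:expression}, its entries are
\[
(D_\epsilon)_{kl}=\bigl((1+\epsilon)^{-k-l}-1\bigr)g_{kl},
\]
so $(D_\epsilon)_{kl}\to 0$ as $\epsilon\to 0^{+}$ for every fixed $(k,l)$. For the operator norm, Lemma \ref{GGbound} and \eqnref{gnorm} give the uniform bound
\[
\|D_\epsilon\|\le \|G_\epsilon\|+\|G\|\le 2\|G\|\le 2\kappa.
\]

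The main step is a standard truncation. Let $P_N$ denote the orthogonal projection on $l^2(\CC)$ onto the first $N$ coordinates, and split
\begin{align*}
u^{T}D_\epsilon v
&=(P_N u)^{T}D_\epsilon(P_N v)+(P_N u)^{T}D_\epsilon\bigl((I-P_N)v\bigr)\\
&\quad+\bigl((I-P_N)u\bigr)^{T}D_\epsilon v.
\end{align*}
By Cauchy--Schwarz and the uniform norm bound on $D_\epsilon$, the last two terms are bounded by $2\kappa M\,\bigl(\|(I-P_N)u\|+\|(I-P_N)v\|\bigr)$, which, thanks to $u,v\in l^2$, can be made smaller than any prescribed $\eta>0$ by choosing $N=N(\eta)$ sufficiently large, \emph{uniformly} in $\epsilon$. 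For this fixed $N$, the first term is a finite sum
\[
\sum_{k,l=1}^{N} u_k\bigl((1+\epsilon)^{-k-l}-1\bigr)g_{kl}\,v_l,
\]
whose each summand tends to $0$ as $\epsilon\to 0^{+}$, hence the whole finite sum does. Letting first $\epsilon\to 0^{+}$ and then $\eta\to 0$ concludes that $u^{T}D_\epsilon v\to 0$.

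The one subtle point is that entrywise convergence of $D_\epsilon$ to $0$ does \emph{not} by itself imply convergence of the bilinear form for arbitrary $l^2$ inputs; the uniform operator-norm bound on $D_\epsilon$ (and the $l^2$-decay of $u$ and $v$) is what makes the truncation work. Once this is in place the argument is routine.
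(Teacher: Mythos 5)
Your argument is essentially the paper's argument in different packaging: the paper splits the scalar factor $(1+\epsilon)^{-k-l}-1$ into two pieces, applies Cauchy--Schwarz together with the Grunsky inequality \eqnref{ineq:Grunsky} and \eqnref{eqn:l2norm:ineq}, and finishes with dominated convergence, while you split off a rank-$N$ corner with projections and finish with the uniform bound $\lVert G_\epsilon-G\rVert\le 2\kappa$ coming from Lemma \ref{GGbound} and \eqnref{gnorm}. For $\epsilon$-independent $X$ and $Y$ both routes are correct and of comparable difficulty.

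The caveat concerns the stated hypothesis that $X=X(\epsilon)$ and $Y=Y(\epsilon)$ may depend on $\epsilon$, which is exactly how the lemma is invoked in the proof of Proposition \ref{lim:F:ep} (there $X$ and $Y$ involve $(4\lambda^2I-\overline{G_\epsilon}G_\epsilon)^{-1}$ and $G_\epsilon$). You declare $u=(x_{mk})_k$ and $v=(y_{ln})_l$ to be \emph{fixed} $l^2$ vectors and choose $N$ so that $\lVert(I-P_N)u\rVert+\lVert(I-P_N)v\rVert<\eta$ ``uniformly in $\epsilon$''. If $u$ and $v$ vary with $\epsilon$, the only uniform information available is $\lVert u(\epsilon)\rVert,\lVert v(\epsilon)\rVert\le M$, and that does not make the tails uniformly small: a row such as $u(\epsilon)=e_{\lceil 1/\epsilon\rceil}$ has unit tail beyond every fixed $N$ for all small $\epsilon$. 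So, as written, your truncation step proves the lemma only in the $\epsilon$-independent case; the finite part of your splitting is unaffected, since each of the finitely many summands is bounded by $M^2\,|g_{kl}|\,\big|(1+\epsilon)^{-k-l}-1\big|\to0$ even when the entries move with $\epsilon$. You are in good company: the paper's own dominated-convergence step \eqnref{dct} likewise treats $x_{mk}$ as $\epsilon$-independent (the uniform norm bound alone does not furnish an $\epsilon$-independent dominating sequence), so the two proofs stand or fall together on this point. To cover the stated generality one needs extra input beyond $\lVert X\rVert,\lVert Y\rVert\le M$ --- for instance uniformly small tails of the relevant rows and columns, or a reformulation of Proposition \ref{lim:F:ep} using the strong convergence $G_\epsilon\to G$ together with uniform boundedness, which gives entrywise convergence of the operator products directly.
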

\begin{proof}
Fix $m,n\in\NN$. 
We have
\begin{align*}
&\Big|\big[ X (G_\epsilon - G) Y \big]_{mn}\Big|\\
= &\left|\sum_{k=1}^\infty \sum_{l=1}^\infty x_{mk} \, g_{kl} \Big[(1+\epsilon)^{-k-l} - 1 \Big] y_{ln} \right| \\
=&\left|\sum_{k=1}^\infty \sum_{l=1}^\infty x_{mk} \, g_{kl} \Big[ \left((1+\epsilon)^{-k} - 1\right)(1+\epsilon)^{-l}+ \left( (1+\epsilon)^{-l} - 1\right) \Big] y_{ln} \right| \\
\le & \sum_{k=1}^\infty \left| \left(  (1+\epsilon)^{-k} - 1 \right)x_{mk} \sum_{l=1}^\infty g_{kl} (1+\epsilon)^{-l} y_{ln}\right| 
 + \sum_{k=1}^\infty \left|x_{mk}\sum_{l=1}^\infty \, g_{kl} \left(  (1+\epsilon)^{-l} - 1 \right) y_{ln} \right| \\
=&: S_1 + S_2.
\end{align*}
It is sufficient to show that $S_1, S_2 \to 0$ as $\epsilon\to0^+$.

From the Cauchy-Schwarz inequality and \eqnref{ineq:Grunsky}, we have
\begin{align}
S_1
&\le \left( \sum_{k=1}^\infty \Big|\left(  (1+\epsilon)^{-k} - 1 \right)x_{mk} \Big|^2 \right)^{\frac{1}{2}}
\left( \sum_{k=1}^\infty \left|\sum_{l=1}^\infty g_{kl} (1+\epsilon)^{-l} y_{ln}\right|^2 \right)^{\frac{1}{2}} \notag\\
&\le \left( \sum_{k=1}^\infty \big( 1 - (1+\epsilon)^{-k} \big)^2 \left|x_{mk} \right|^2 \right)^{\frac{1}{2}}\left( \sum_{l=1}^\infty \Big|(1+\epsilon)^{-l} y_{ln}\Big|^2 \right)^{\frac{1}{2}}.\label{S1bound}
\end{align}
It then follows from \eqnref{eqn:l2norm:ineq} that
\begin{align*}
&\sum_{k=1}^\infty \left( 1 - (1+\epsilon)^{-k} \right)^2 \left|x_{mk} \right|^2
\le \sum_{k=1}^\infty \left|x_{mk} \right|^2
\le \left\lVert X \right\rVert^2 \leq M^2,\\
&\sum_{l=1}^\infty \Big|(1+\epsilon)^{-l} y_{ln}\Big|^2
\leq 
\sum_{l=1}^\infty \left|y_{ln}\right|^2 \leq \|Y\|^2\leq M^2\quad\mbox{independent of }\epsilon. 
\end{align*}
Applying the dominated convergence theorem, we obtain
\beq\label{dct}
\lim_{\epsilon\to0^+} \sum_{k=1}^\infty \left( 1 - (1+\epsilon)^{-k} \right)^2 \left|x_{mk} \right|^2 = \sum_{k=1}^\infty \lim_{\epsilon\to0^+}  \left( 1 - (1+\epsilon)^{-k} \right)^2 \left|x_{mk} \right|^2 = 0.
\eeq
From \eqnref{S1bound} and \eqnref{dct}, $S_1$ converges to $0$ as $\epsilon\to0^+$.

Similarly, we derive
\begin{align}
S_2 
&= \sum_{k=1}^\infty \left|x_{mk}\sum_{l=1}^\infty g_{kl} \left(  (1+\epsilon)^{-l} - 1 \right) y_{ln} \right| \notag\\
&\le \left( \sum_{k=1}^\infty \left|x_{mk}\right|^2 \right)^{\frac{1}{2}}
\left( \sum_{k=1}^\infty \left| \sum_{l=1}^\infty g_{kl} \left(  (1+\epsilon)^{-l} - 1 \right) y_{ln} \right|^2 \right)^{\frac{1}{2}}\notag\\
&\le \left\lVert X \right\rVert  \left( \sum_{l=1}^\infty \left( 1 - (1+\epsilon)^{-l} \right)^2 \left|  y_{ln} \right|^2 \right)^{\frac{1}{2}}\rightarrow 0\quad\mbox{as }\epsilon\to 0^+.
\end{align}
This completes the proof.
\end{proof}

\begin{prop}\label{lim:F:ep}
As $\epsilon$ tends to zero, the right-hand sides of \eqnref{eF} converge to the formulas with $\gamma$ in the place of $\gamma_\epsilon$, that is, for each $m,n$,
\beq\label{F_lim_ep}
\begin{aligned}
&\lim_{\epsilon\to 0^+}\FF_{mn}^{(1)}(\Omega_{\epsilon},\lambda) = \left[4\pi\, \gamma^\NNc \NNc^{\frac{1}{2}}\, G \left[ I+ (1-4\lambda^2) \left( 4\lambda^2 I - \overline{G} G \right)^{-1} \right] \NNc^{\frac{1}{2}} \,\gamma^\NNc\right]_{mn}, \\
&\lim_{\epsilon\to 0^+}\FF_{mn}^{(2)}(\Omega_{\epsilon},\lambda) = \left[8\pi \lambda \,\gamma^{\NNc} \NNc^{\frac{1}{2}} \left[ I + (1-4\lambda^2) \left( 4\lambda^2 I - \overline{G} G \right)^{-1} \right] \NNc^{\frac{1}{2}} \,\gamma^{\NNc}\right]_{mn}.
\end{aligned}
\eeq
\end{prop}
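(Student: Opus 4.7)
The plan is to reduce the componentwise convergence in \eqref{F_lim_ep} to the content of Lemma \ref{GepsilonG_bound} via a resolvent-identity decomposition. Because the $(m,n)$-entry of $\gamma_\epsilon^{\NNc}\NNc^{\frac{1}{2}} A\,\NNc^{\frac{1}{2}}\gamma_\epsilon^{\NNc}$ is $\sqrt{mn}\,\gamma_\epsilon^{m+n}\,A_{mn}$ and $\gamma_\epsilon^{m+n}\to\gamma^{m+n}$, it suffices to prove that the central factor
$$M_\epsilon := G_\epsilon\bigl[\,I + (1-4\lambda^2)B_\epsilon\,\bigr],\qquad B_\epsilon := (4\lambda^2 I - \overline{G_\epsilon}G_\epsilon)^{-1},$$
converges entrywise to $M_0$, the analogous expression with $G$, $B_0$ in place of $G_\epsilon$, $B_\epsilon$. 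Once this is done, $\gamma_\epsilon^{m+n}(M_\epsilon)_{mn}-\gamma^{m+n}(M_0)_{mn}\to 0$ follows from $\sup_\epsilon|(M_\epsilon)_{mn}|\le\sup_\epsilon\|M_\epsilon\|<\infty$, which is uniform in $\epsilon$ by Lemma \ref{GGbound}.

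To analyze $M_\epsilon - M_0$, I would split
$$M_\epsilon - M_0 = (G_\epsilon - G)\bigl[I + (1-4\lambda^2)B_0\bigr] + (1-4\lambda^2)\,G_\epsilon\bigl(B_\epsilon - B_0\bigr).$$
Lemma \ref{GepsilonG_bound} with $X=I$ and $Y=I+(1-4\lambda^2)B_0$ (both uniformly bounded by Lemma \ref{GGbound}) handles the first summand. For the second, I would invoke the second resolvent identity
$$B_\epsilon - B_0 = B_\epsilon\bigl[\overline{G_\epsilon}G_\epsilon - \overline{G}G\bigr]B_0 = B_\epsilon\,\overline{G_\epsilon}\,(G_\epsilon - G)\,B_0 + B_\epsilon\,(\overline{G_\epsilon}-\overline{G})\,G\,B_0,$$
which gives
$$G_\epsilon(B_\epsilon - B_0) = \bigl(G_\epsilon B_\epsilon \overline{G_\epsilon}\bigr)(G_\epsilon - G)\,B_0 + \bigl(G_\epsilon B_\epsilon\bigr)(\overline{G_\epsilon}-\overline{G})\bigl(G B_0\bigr).$$
The bracketed products are uniformly bounded in $\epsilon$ (again by Lemma \ref{GGbound} together with $\|\overline{G_\epsilon}\|=\|G_\epsilon\|\le\|G\|$), so Lemma \ref{GepsilonG_bound} sends the $(m,n)$-entry of the first piece to $0$.

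For the remaining piece containing $\overline{G_\epsilon}-\overline{G}$, I would exploit the conjugation identity
$$\overline{\bigl[X(\overline{G_\epsilon}-\overline{G})Y\bigr]_{mn}} = \bigl[\overline{X}(G_\epsilon - G)\overline{Y}\bigr]_{mn},$$
noting $\|\overline{X}\|=\|X\|$, and apply Lemma \ref{GepsilonG_bound} with $\overline{G_\epsilon B_\epsilon}$ and $\overline{G B_0}$ in the roles of $X,Y$. The conjugate of the entry tends to zero, hence so does the entry itself. Assembling the pieces yields $(M_\epsilon - M_0)_{mn}\to 0$, and combined with $\gamma_\epsilon^{m+n}\to\gamma^{m+n}$ this gives \eqref{F_lim_ep}.

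The essential analytic content lives in Lemma \ref{GepsilonG_bound} and in the uniform resolvent bounds of Lemma \ref{GGbound}; the work here is a careful algebraic decomposition. The only mildly delicate point is handling the conjugate difference $\overline{G_\epsilon}-\overline{G}$, which does not appear directly in the hypothesis of Lemma \ref{GepsilonG_bound} but is reduced to it by the conjugation identity above.
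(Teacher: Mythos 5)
Your proposal is correct and follows essentially the same route as the paper: both reduce the claim to entrywise control of the central operator factor via resolvent-type decompositions into terms of the form $X(G_\epsilon-G)Y$, bounded uniformly by Lemma \ref{GGbound} and sent to zero entrywise by Lemma \ref{GepsilonG_bound}. The only cosmetic differences are that the paper cancels the leading term of $\FF^{(1)}$ exactly via $\gamma_\epsilon^{\NNc}\NNc^{\frac{1}{2}}G_\epsilon\NNc^{\frac{1}{2}}\gamma_\epsilon^{\NNc}=C$ where you invoke the lemma with $X=I$, and that you make explicit (via the conjugation identity) the treatment of the $\overline{G_\epsilon}-\overline{G}$ term, which the paper leaves implicit.
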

\begin{proof}
Since $ \gamma_\epsilon^\NNc \NNc^{\frac{1}{2}} G_\epsilon \NNc^{\frac{1}{2}} \gamma_\epsilon^\NNc=C=\gamma^\NNc \NNc^{\frac{1}{2}} G \NNc^{\frac{1}{2}} \gamma^\NNc $, we cancel out the first term of $\FF_{mn}^{(1)}$ and get
\begin{align}
&\FF_{mn}^{(1)}(\Omega_{\epsilon},\lambda) - \bigg[4\pi \,\gamma^\NNc \NNc^{\frac{1}{2}}\, G \left[ I+ (1-4\lambda^2) \left( 4\lambda^2 I - \overline{G} G \right)^{-1} \right] \NNc^{\frac{1}{2}}\, \gamma^\NNc \bigg]_{mn}\notag \\ 
= &\,4\pi \sqrt{mn} \, (1-4\lambda^2)  \bigg( \gamma_\epsilon^{m+n} \left[G_\epsilon \left( 4\lambda^2 I - \overline{G_\epsilon} G_\epsilon \right)^{-1}\right]_{mn} - \gamma^{m+n} \left[G \left( 4\lambda^2 I - \overline{G} G \right)^{-1}\right]_{mn} \bigg).\notag
\end{align}
We then have
 \begin{align*}
& \gamma_\epsilon^{m+n} \left[G_\epsilon \left( 4\lambda^2 I - \overline{G_\epsilon} G_\epsilon \right)^{-1}\right]_{mn} - \gamma^{m+n} \left[G \left( 4\lambda^2 I - \overline{G} G \right)^{-1}\right]_{mn}\\
=\,& \gamma_\epsilon^{m+n} \left[ (G_\epsilon - G) \left( 4\lambda^2 I - \overline{G_\epsilon} G_\epsilon \right)^{-1} \right]_{mn}\notag\\
 + &\,\gamma_\epsilon^{m+n} \left[ G \left( 4\lambda^2 I - \overline{G_\epsilon} G_\epsilon \right)^{-1} (\overline{G_\epsilon} - \overline{G}) \, G_\epsilon \left( 4\lambda^2 I - \overline{G} G \right)^{-1} \right]_{mn}\notag\\
 +&\, \gamma_\epsilon^{m+n} \left[ G \left( 4\lambda^2 I - \overline{G_\epsilon} G_\epsilon \right)^{-1} \overline{G} \, (G_\epsilon - G) \left( 4\lambda^2 I - \overline{G} G \right)^{-1} \right]_{mn}\notag\\
+ &\, \left(\gamma_\epsilon^{m+n} - \gamma^{m+n} \right) \left[ G \left( 4\lambda^2 I - \overline{G} G \right)^{-1} \right]_{mn}.
\end{align*}
From Lemma \ref{GGbound} and Lemma \ref{GepsilonG_bound}, this term converges to $0$ as $\epsilon\rightarrow 0^+$.

Similarly, it holds that
\begin{align*}
& \FF_{mn}^{(2)}(\Omega_{\epsilon},\lambda) - \bigg[ 8\pi \lambda \,\gamma^{\NNc} \NNc^{\frac{1}{2}} \left[ I + (1-4\lambda^2) \left( 4\lambda^2 I - \overline{G} G \right)^{-1} \right] \NNc^{\frac{1}{2}}\, \gamma^{\NNc} \bigg]_{mn}\notag \\ 
=\,& 8\pi \lambda \sqrt{mn} \, \left(\gamma_\epsilon^{m+n} - \gamma^{m+n}\right)\notag \\
+ &\, 8\pi \lambda \sqrt{mn} \, (1-4\lambda^2) \bigg( \gamma_\epsilon^{m+n} \left[ \left( 4\lambda^2 I - \overline{G_\epsilon} G_\epsilon \right)^{-1}\right]_{mn} - \gamma^{m+n} \left[ \left( 4\lambda^2 I - \overline{G} G \right)^{-1}\right]_{mn} \bigg).
\end{align*}
The first term converges to $0$ as $\epsilon\to 0^+$. The second terms satisfies
\begin{align*}
& \gamma_\epsilon^{m+n} \left[ \left( 4\lambda^2 I - \overline{G_\epsilon} G_\epsilon \right)^{-1}\right]_{mn} - \gamma^{m+n} \left[ \left( 4\lambda^2 I - \overline{G} G \right)^{-1}\right]_{mn} \\
=\,&  \left( \gamma_\epsilon^{m+n} - \gamma^{m+n} \right) \left[ \left( 4\lambda^2 I - \overline{G} G \right)^{-1}\right]_{mn} \\ 
 + &\, \gamma_\epsilon^{m+n} \Big[ \left( 4\lambda^2 I - \overline{G_\epsilon} G_\epsilon \right)^{-1} \overline{G_\epsilon} \, (G_\epsilon - G) \left( 4\lambda^2 I - \overline{G} G \right)^{-1}\Big]_{mn} \\ 
 + &\,  \gamma_\epsilon^{m+n} \Big[ \left( 4\lambda^2 I - \overline{G_\epsilon} G_\epsilon \right)^{-1} (\overline{G_\epsilon} - \overline{G}) \, G \left( 4\lambda^2 I - \overline{G} G \right)^{-1}\Big]_{mn}.
 \end{align*}
From Lemma \ref{GGbound} and Lemma \ref{GepsilonG_bound}, this term converges to $0$ as $\epsilon\rightarrow 0^+$.
Hence, we prove the proposition.
\end{proof}

Let $\delta\in (0,1)$ be fixed.
By applying Proposition \ref{lim:F:ep} to $\Omega^\delta_\epsilon$ and using \eqnref{G_scaling_invariant},  we have
\begin{align*}
&\lim_{\epsilon\to 0^+}\FF_{mn}^{(1)}(\Omega^\delta_{\epsilon},\lambda) = \left[4\pi\, (\gamma^\delta)^\NNc \NNc^{\frac{1}{2}}\, G \left[ I+ (1-4\lambda^2) \left( 4\lambda^2 I - \overline{G} G \right)^{-1} \right] \NNc^{\frac{1}{2}} \,(\gamma^\delta)^\NNc\right]_{mn}, \\
&\lim_{\epsilon\to 0^+}\FF_{mn}^{(2)}(\Omega^\delta_{\epsilon},\lambda) =\left[ 8\pi \lambda \,(\gamma^\delta)^{\NNc} \NNc^{\frac{1}{2}} \left[ I + (1-4\lambda^2) \left( 4\lambda^2 I - \overline{G} G \right)^{-1} \right] \NNc^{\frac{1}{2}} \,(\gamma^\delta)^{\NNc}\right]_{mn}.
\end{align*}
Because of $\gamma^\delta=(1-\delta)\gamma$, one can easily find that $\lim_{\epsilon\to 0^+}\FF_{mn}^{(j)}(\Omega^\delta_{\epsilon},\lambda)$ converges to the right-hand sides of the equations in \eqnref{F_lim_ep} as $\delta\to0^+$. 
In view of Lemma \ref{lemma:F:delta} and Proposition \ref{lim:F:ep}, we conclude that
\beq\label{limit_order}
\lim_{\epsilon\to 0^+}\lim_{\delta\to 0^+}\FF_{mn}^{(j)}(\Omega^\delta_{\epsilon},\lambda) 
=\lim_{\epsilon\to 0^+}\FF_{mn}^{(j)}(\Omega_{\epsilon},\lambda) 
=\lim_{\delta\to 0^+}\lim_{\epsilon\to 0^+}\FF_{mn}^{(j)}(\Omega^\delta_{\epsilon},\lambda)\quad\mbox{for }j=1,2.
\eeq

\smallskip

\noindent{\textbf{Proof of Theorem \ref{conformal2GPT}}.}
From \eqnref{FPNP} and the fact that $P$ is invertible, it is sufficient to prove that the FPTs of $\Om$ satisfy \eqnref{FPT:factor} under the same assumptions as in Theorem \ref{conformal2GPT}. 

Fix indices $m,n$. Assume that $\lambda>\frac{1}{2}$. As in Lemma \ref{diagonalmonotone}, we set
$$\mathbb{A}_{mn}^{(+1)}=\text{Re}\left\{ \FF_{mm}^{(1)} + \FF_{nn}^{(1)} + \FF_{mm}^{(2)} + \FF_{nn}^{(2)}\right\} + 2 \text{Re}\left\{\FF_{mn}^{(1)} +\FF_{mn}^{(2)} \right\}.$$
From \eqnref{limit_order}, it holds that
\beq\label{AA_inequa}
\lim_{\epsilon\to0^+} \mathbb{A}_{mn}^{(1)}(\Omega_\epsilon, \lambda)
=\lim_{\delta\to0^+}\lim_{\epsilon\to0^+} \mathbb{A}_{mn}^{(1)}(\Omega_\epsilon^\delta, \lambda).
\eeq
From Lemma \ref{diagonalmonotone} and the fact that $\Omega\subsetneq\Omega_\epsilon$ for $\epsilon>0$, we have
\beq\label{I:eqn1}
\begin{aligned}
\lim_{\epsilon\to0^+} \mathbb{A}_{mn}^{(1)}(\Omega_\epsilon, \lambda)
\geq& \mathbb{A}_{mn}^{(1)}(\Omega, \lambda)
\end{aligned}
\eeq
On the other hand, \eqnref{Omega:inclu} and the monotonicity of $\mathbb{A}_{mn}^{(1)}$ imply that
$$\lim_{\epsilon\to0^+} \mathbb{A}_{mn}^{(1)}(\Omega_\epsilon^\delta, \lambda)\leq  \mathbb{A}_{mn}^{(1)}(\Omega, \lambda) \quad \mbox{for each fixed }\delta\in(0,1).$$
Therefore, we derive
$$\lim_{\delta\to0^+}\lim_{\epsilon\to0^+} \mathbb{A}_{mn}^{(1)}(\Omega_\epsilon^\delta, \lambda)\leq  \mathbb{A}_{mn}^{(1)}(\Omega, \lambda).$$
By applying this relation to \eqnref{AA_inequa} and \eqnref{I:eqn1}, we conclude that
\begin{equation}\notag
\begin{aligned}
\mathbb{A}_{mn}^{(1)}(\Omega, \lambda)=&\lim_{\epsilon\to 0^+} \mathbb{A}_{mn}^{(1)}(\Omega_\epsilon, \lambda).
\end{aligned}
\end{equation}
One can also prove this relation for $\lambda<-\frac{1}{2}$. 

Furthermore, similar relations hold for $\mathbb{A}_{mn}^{(-1)}$ and $\mathbb{A}_{mn}^{(\pm j)}$, $j=2,3,4$. Note that $\FF_{nm}^{(1)}$ and $\FF_{nm}^{(2)}$ are linear combinations of $\mathbb{A}_{mn}^{(\pm j)}$, $j=1,2,3,4$. 
It then directly follows that, for each $m,n$,
$$\FF_{mn}^{(k)}(\Omega, \lambda)=\lim_{\epsilon\to 0^+} \FF_{mn}^{(k)}(\Omega_\epsilon, \lambda),\ k=1,2.
$$
Combining this convergence and Proposition \ref{lim:F:ep}, we conclude that the factorization \eqnref{FPT:factor} also holds for $\Om$ that satisfies the assumptions in Theorem \ref{conformal2GPT}.
\qed

\section{Numerical results}\label{sec:numerical}

In this section, we propose a semi-analytic imaging scheme for a planar conductivity inclusion with arbitrary constant conductivity based on Theorems \ref{conformalrecovery} and \ref{thm:recovery:Lip}. To demonstrate the validity of the proposed reconstruction approach, we present numerical simulations with objects of different shapes.

\subsection{Reconstruction scheme}\label{subsec:scheme}

Let $\Om$ be an unknown inclusion having constant conductivity $\sigma_c$. Set $\lambda=\frac{\sigma_c+\sigma_m}{2(\sigma_c-\sigma_m)}$.
For an inclusion with a $C^{1,\alpha}$ boundary, we apply Theorem \ref{conformalrecovery} with measurements of ${\NN}^{(1)}$ and $\NN^{(2)}$. 
By Theorem \ref{thm:recovery:Lip}, the same reconstruction procedure is valid for inclusions with corners. 

We first find $\lambda$ by using the constraint equation \eqnref{lambda:GPTs} in Theorem \ref{conformalrecovery}\,(a), that is, 
\beq\label{lambda_eqn}
\lambda  = \pi\,\frac{ \widetilde{\NN}^{(2)}_{11} \, \widetilde{\NN}^{(2)}_{22} -  \widetilde{\NN}^{(2)}_{12} \, \widetilde{\NN}^{(2)}_{21} }{\left( \widetilde{\NN}^{(2)}_{11}\right)^3}.
\eeq
We note that the modified GPTs $\widetilde{\NN}^{(1)}(\Om,\lambda)$ and $\widetilde{\NN}^{(2)}(\Om,\lambda)$ are defined by \eqnref{def:tildeNN} in terms of $\lambda$ and the original contracted GPTs  ${\NN}^{(1)}(\Om,\lambda)$ and ${\NN}^{(2)}(\Om,\lambda)$.
We can rewrite \eqnref{lambda_eqn} as 
$$\lambda = f(\lambda),\quad f(t):= \pi\,\frac{ \widetilde{\NN}^{(2)}_{11}(t) \, \widetilde{\NN}^{(2)}_{22}(t) -  \widetilde{\NN}^{(2)}_{12}(t) \, \widetilde{\NN}^{(2)}_{21}(t) }{\left( \widetilde{\NN}^{(2)}_{11}(t)\right)^3}
$$
with a variable $t\in(-1/2, 1/2)$ and
\begin{gather*}
\widetilde{\NN}^{(1)}(t):= \mathbb{N}^{(1/2)} \,\MM(t)\, \NN^{(2)},\quad
 \widetilde{\NN}^{(2)}(t):=\MM(t)\, \NN^{(2)},\\
\MM(t)=\left(I-\overline{\mathbb{N}^{(1/2)} }\,\mathbb{N}^{(1/2)}\right)\left(I-4t^2\,\overline{\mathbb{N}^{(1/2)} }\,\mathbb{N}^{(1/2)}\right)^{-1},
\end{gather*}
where $\mathbb{N}^{(1)}=\mathbb{N}^{(1)}(\Om,\lambda)$, $\mathbb{N}^{(2)}=\mathbb{N}^{(2)}(\Om,\lambda)$ and $\mathbb{N}^{(1/2)}=\mathbb{N}^{(1)} \big(\mathbb{N}^{(2)} \big)^{-1}$ are given by measurements and are not modified by $t$.
Since it is not possible to explicitly solve \eqnref{lambda_eqn} for $\lambda$, we instead apply the fixed-point iteration method to find the numerical solution. 
Afterward, we retrieve the conformal mapping coefficients $\gamma$, $a_0$, and $a_m$ by the explicit formula \eqnref{conformal:GPTs} in Theorem \ref{conformalrecovery}\,(b).

To develop the recovery method to be more realistic, we replace the semi-infinite matrices of the contacted GPTs by their finite section matrices. In other words, for some $\textrm{Ord}\geq 2$, we approximate $\NN^{(1)}$, $\NN^{(2)}$ by the truncated matrices as
\beq\label{ordGPTs}
\NN^{(j)} \approx \left(\NN_{mn}^{(j)}\right)_{1\leq m,n\leq \textrm{Ord}}, \quad j=1,2,
\eeq
and compute
$\mathbb{N}^{(1/2)}=\mathbb{N}^{(1)} \big(\mathbb{N}^{(2)} \big)^{-1}$ with the truncated matrices. With these finite approximations of $\NN^{(1)}$, $\NN^{(2)}$ and $\NN^{(1/2)}$, we recover the conductivity constant $\sigma_c$ (or, $\lambda$) and the shape of $\Om$ by the following two-step procedure.
\begin{itemize}
\item {\textbf{Step 1.}} 
Set the initial guess as
$$
\lambda_0 =  \pi\frac{\NN^{(2)}_{11}(\Om,\lambda)\NN^{(2)}_{22}(\Om,\lambda) - \NN^{(2)}_{12}(\Om,\lambda) \NN^{(2)}_{21}(\Om,\lambda) }{\left(\NN^{(2)}_{11}(\Om,\lambda) \right)^3},
$$
where the right-hand side is given by the measurements. 
For $k\geq0$, we recursively define 
\beq\notag
\lambda_{k+1}  = f(\lambda_k)
\eeq
until the tolerance criterion
$$
\left|\frac{\lambda_{k+1} - \lambda_k}{\lambda_k}\right| < 10^{-10}
$$
is met. 

\item {\textbf{Step 2.}}
We compute $\gamma$, $a_0$ and $a_m$ for $m\leq\textrm{Ord}$ by the explicit formula \eqnref{conformal:GPTs} in Theorem \ref{conformalrecovery}\,(b). With these conformal mapping coefficients, we recover the target anomaly $\Om$ by taking the image of 
$$\Psi_{\textrm{Ord}}(w)=w+a_0+\frac{a_1}{w}+\frac{a_2}{w^2}+\cdots+\frac{a_{\textrm{Ord}}}{w^{\textrm{Ord}}},\quad |w|=\gamma.$$
Then, real and imaginary parts of $\Psi_{\textrm{Ord}}(w)$ represent the coordinates of the boundary of the inclusion.
\end{itemize}

\subsection{Examples}\label{subsec:num:examples}

We show numerical results of four different shapes of the domains. Figure \ref{TargetGeometric} illustrates the shapes.  
All four domains satisfy the domain assumption in Theorem \ref{conformalrecovery} or in Theorem \ref{thm:recovery:Lip}. Examples in Figure \ref{TargetGeometric}\,(a,\,b) have $C^{1,\alpha}$ boundaries, and examples in Figure \ref{TargetGeometric}\,(c,\,d) have boundaries that are piecewise smooth Jordan curves without cusps and satisfy the star-shaped condition in Theorem \ref{thm:recovery:Lip}. We set $\sigma_m=1$. 
\begin{figure}[H]
	\centering
	\subfloat[\label{kite}Kite]{\includegraphics[width=0.24\textwidth, trim = 1cm 0 1.5cm 0]{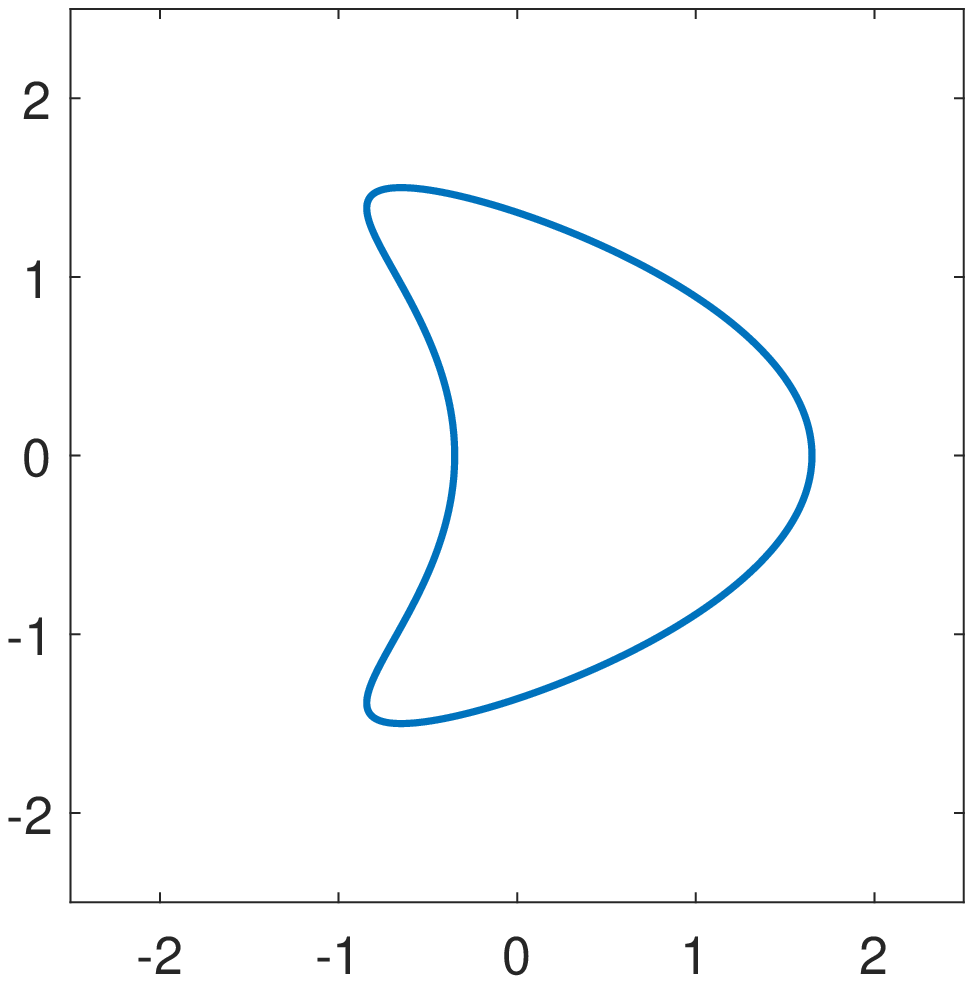}}
	\subfloat[\label{star}Starfish]{\includegraphics[width=0.24\textwidth, trim = 1cm 0 1.5cm 0]{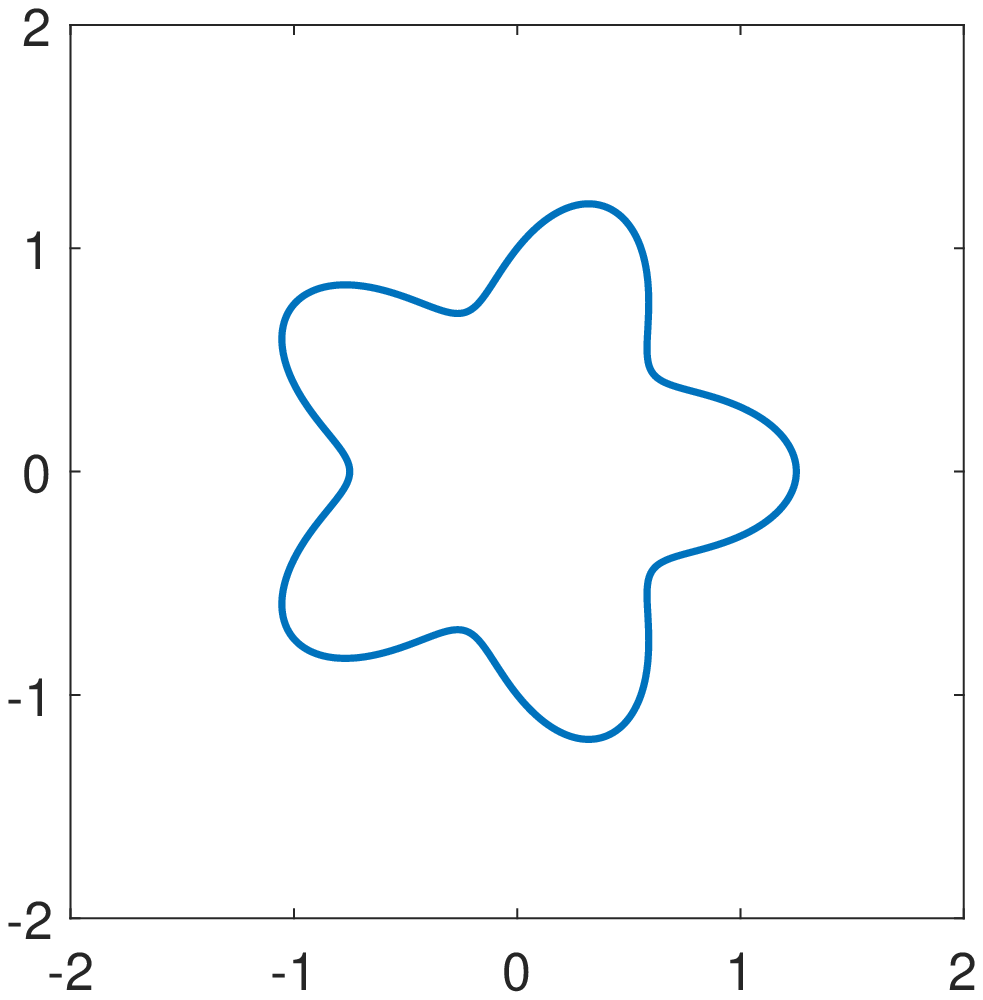}}
	\subfloat[\label{cap}Cap]{\includegraphics[width=0.24\textwidth, trim = 1cm 0 1.5cm 0]{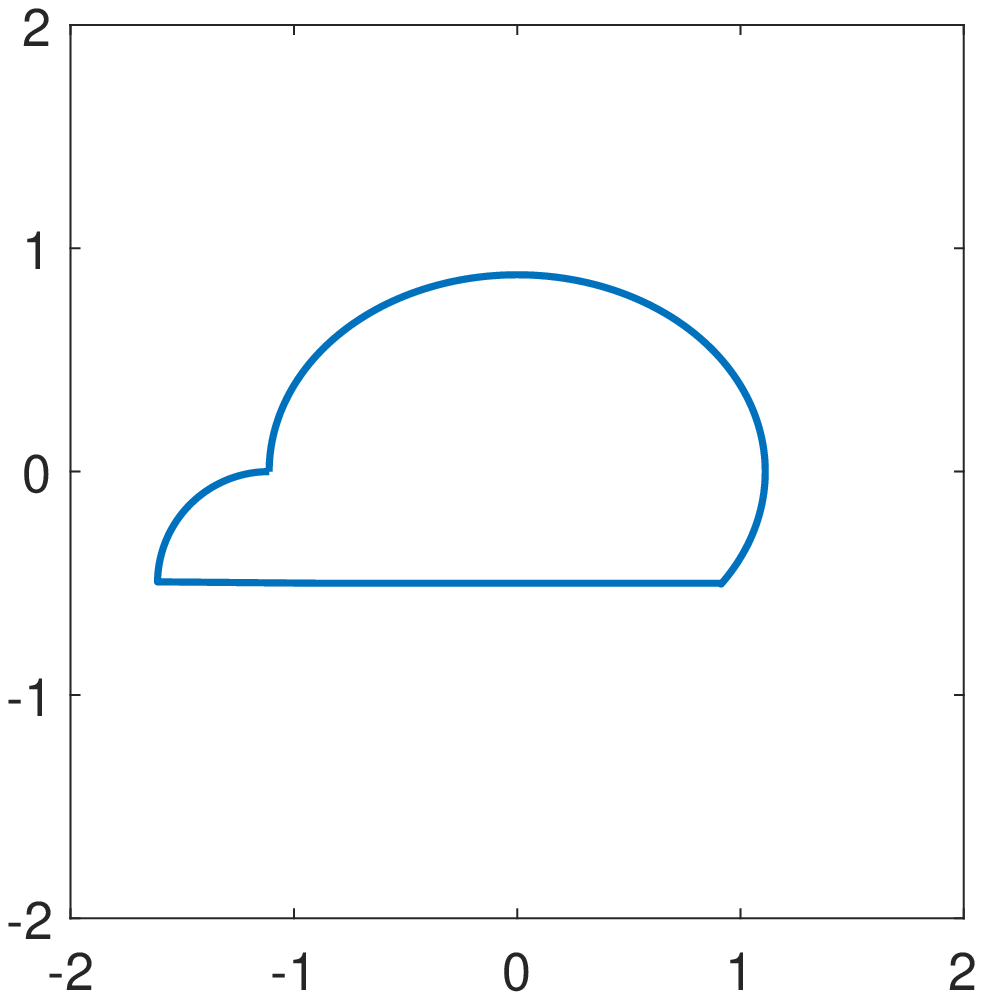}}
	\subfloat[\label{ellipse}Perturbed ellipse]{\includegraphics[width=0.24\textwidth, trim = 1cm 0 1.5cm 0]{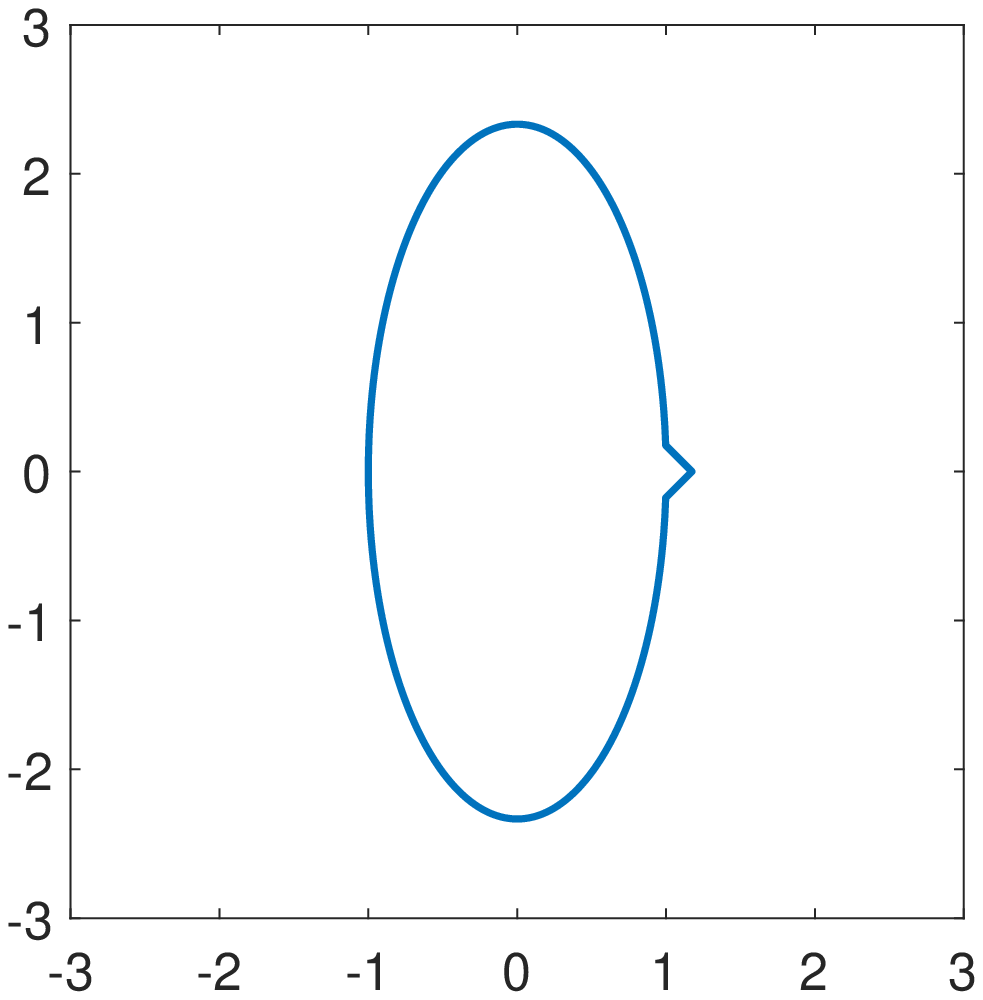}}
	\caption{Target geometries of simply connected planar inclusions. The kite- and starfish-shaped domains have $C^{1,\alpha}$ boundaries, and the cap-shaped domain and perturbed ellipse are Lipschitz domains satisfying the star-shaped condition in Theorem \ref{thm:recovery:Lip}. }
	\label{TargetGeometric}
\end{figure}

 To obtain numerical values of the GPTs, we compute the integral definition \eqnref{def:NN} for the GPTs, where the definition involves a Fredholm second-kind integral equation containing the NP operator, by applying Nystr\"{o}m discretization to this integral equation.
See \cite[Sections 17.1, 17.3]{Ammari:2013:MSM} for numerical codes to compute the GPTs of smooth domains. In our examples with corners, the Nystr\"{o}m discretization is accelerated and stabilized using {\it recursively compressed inverse preconditioning} (RCIP) \cite{Helsing:2013:SIE}. We refer the reader to \cite{Choi:2018:CEP, Helsing:2022:SFS, Helsing:2017:CSN} for computational examples of this procedure.

\begin{example}[Kite-shaped domain]\label{Ex1} \rm
First, we consider the kite-shaped inclusion whose shape is portrayed in Figure \ref{TargetGeometric}\,(a). The boundary curve is parametrized by
\begin{equation*}
x(t) = \big(\cos(t)+0.65\cos(2t),1.5\sin(t) \big),\quad t\in[0,2\pi).
\end{equation*}
Figure \ref{Ex1-Result} presents reconstruction results for the material parameter $\sigma_c$ (equivalently, $\lambda=\frac{\sigma_c+\sigma_m}{2(\sigma_c-\sigma_m)}$) and shape of the inclusion from the GPTs $\NN^{(j)}_{mn}$, $m,n\leq \textrm{Ord}$, with various $\textrm{Ord}$.
For this and the following examples, the black solid curve indicates the boundary of the target domain, the red dotted curve indicates the recovered boundary, and $\lambda^{\mbox{rec}}$ denotes the reconstructed value of $\lambda$. The imaging accuracy improves as Ord increases. 
Even with low orders of the GPTs, the location and shape of the target are approximately identified (see Figure \ref{Ex1-Result}\,(a,b)). The shape recovery scheme precisely produces the shape of the target using higher-order GPTs, as exhibited in Figure \ref{Ex1-Result}\,(d).
\end{example}

\begin{figure}[H]
	\captionsetup[subfloat]{margin=8pt,format=hang,singlelinecheck=false}
\centering
\subfloat[\label{Ex1-1}Ord$=2$,\newline $\lambda^{\mbox{rec}}\approx1.0751$]{\includegraphics[width=0.24\textwidth]{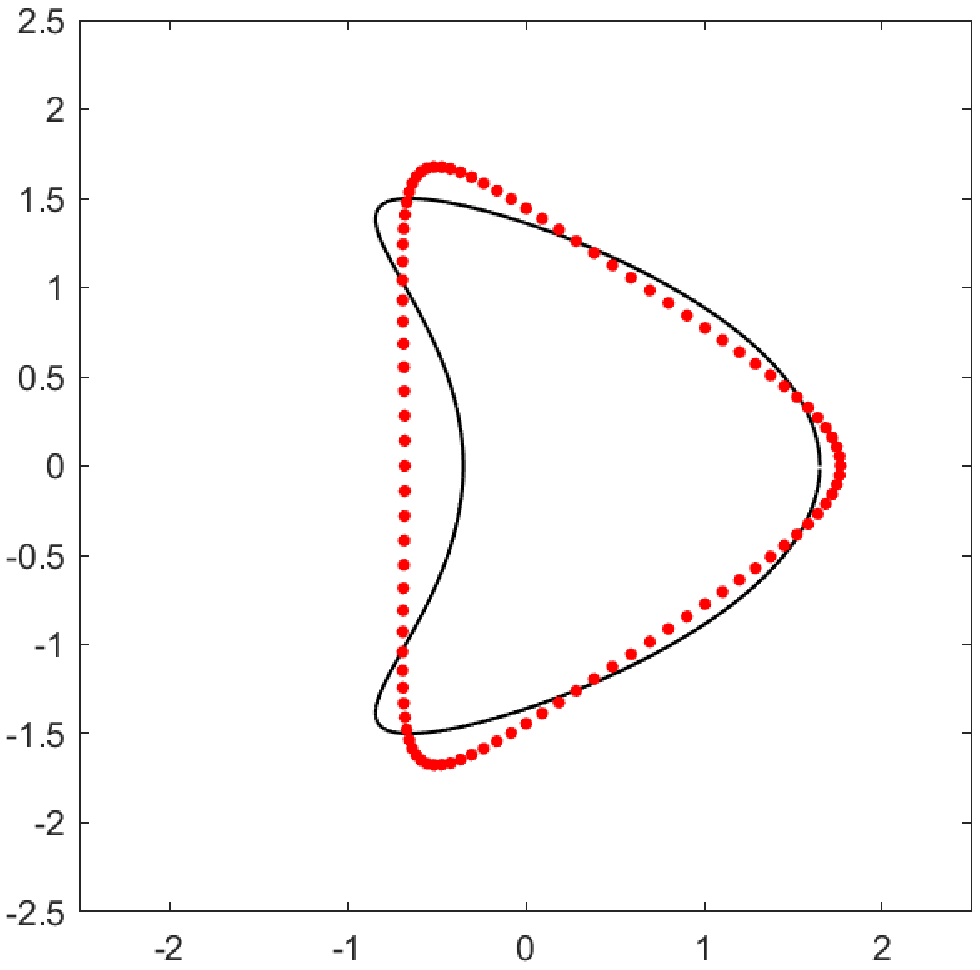}}
\subfloat[\label{Ex1-2}Ord$=3$,\newline $\lambda^{\mbox{rec}}\approx1.0246$]{\includegraphics[width=0.24\textwidth]{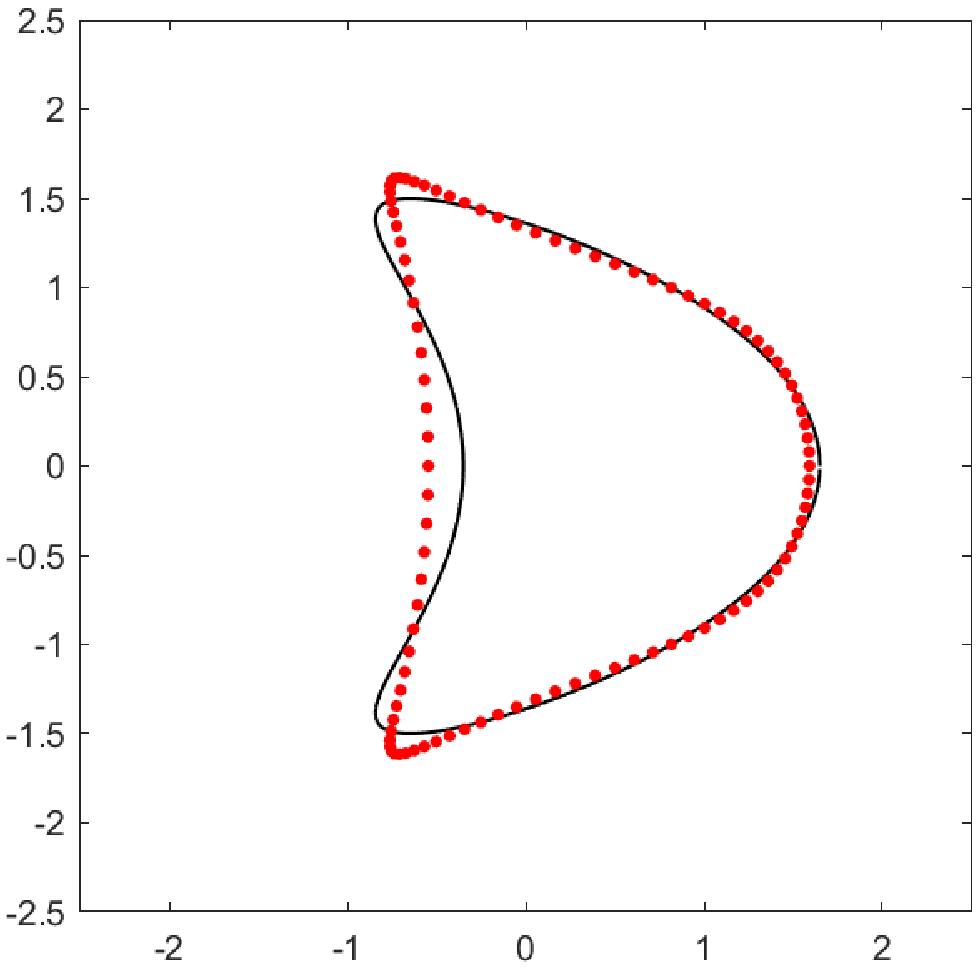}}
\subfloat[\label{Ex1-3}Ord$=5$, \newline $\lambda^{\mbox{rec}}\approx1.0036$]{\includegraphics[width=0.24\textwidth]{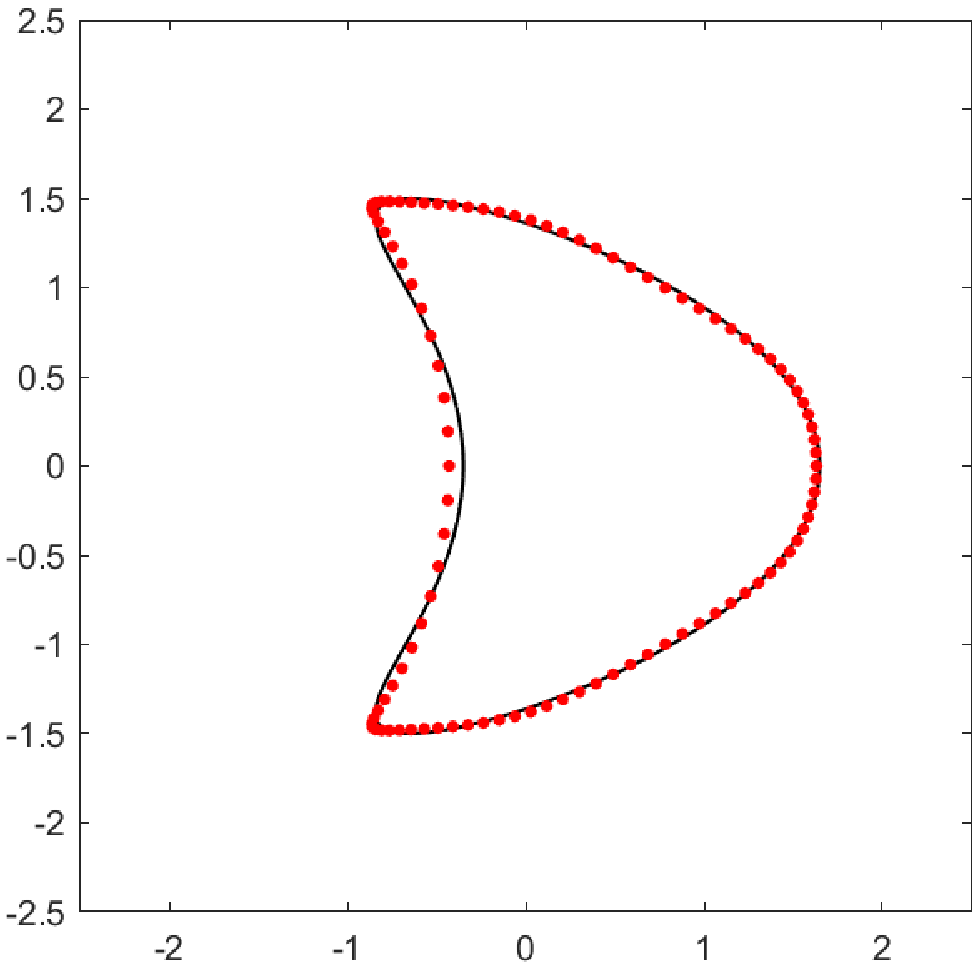}}
\subfloat[\label{Ex1-4}Ord$=10$,\newline
$\lambda^{\mbox{rec}}\approx 1.0000$]{\includegraphics[width=0.24\textwidth]{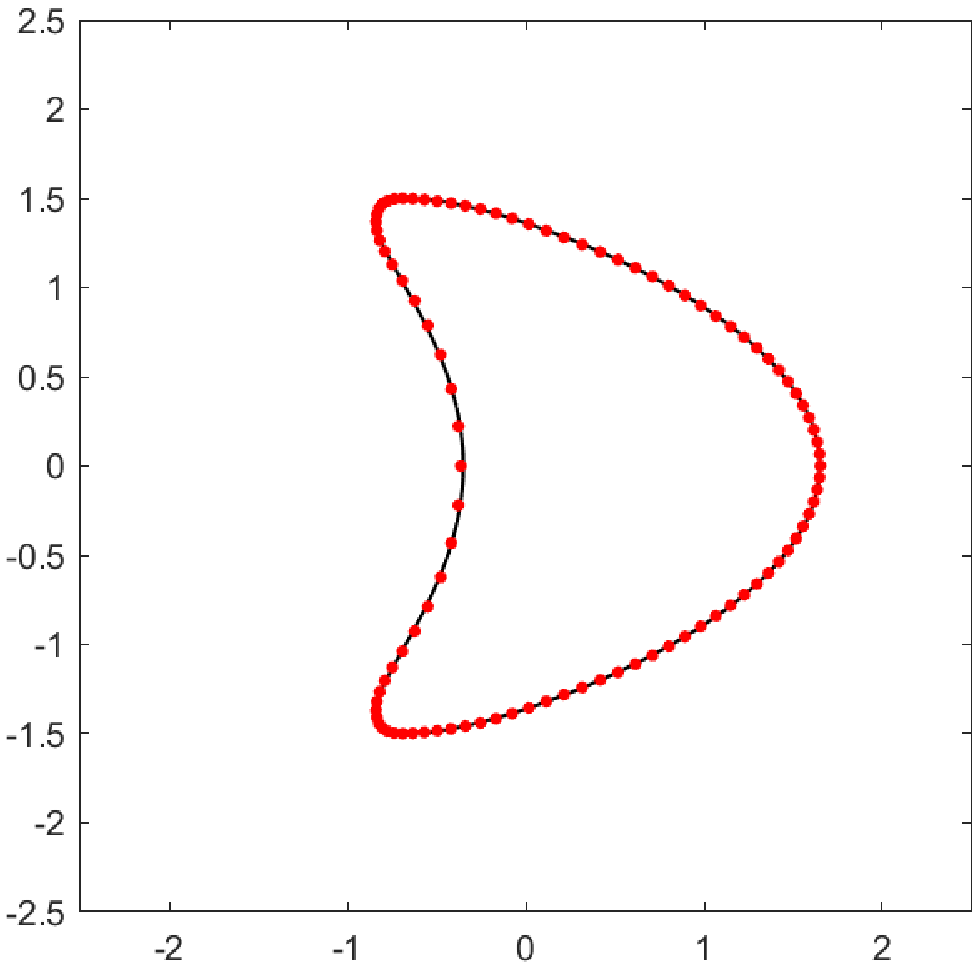}}
\caption{Recovery of the kite-shaped domain with $\sigma_c = 3$ (i.e., $\lambda = 1$). The GPTs $\NN^{(1)}_{mn}$ and $\NN^{(2)}_{mn}$ are used for $m,n\leq\textrm{Ord}=2,3,5,10$. The black solid curve indicates the boundary of the target domain, the red dotted curve indicates the recovered boundary, and $\lambda^{\mbox{rec}}$ denotes the reconstructed value of $\lambda$.}
\label{Ex1-Result}
\end{figure}

\begin{example}[Starfish-shaped domain]\label{Ex2} \rm
In this example, the starfish-shaped domain displayed in Figure \ref{TargetGeometric}\,(b) is investigated, where the parameterization is given by 
\begin{equation*}
x(t) = \big(\cos(t)+0.25\cos(5t)\cos(t), \, \sin(t)+0.25\cos(5t)\sin(t)\big),\quad t\in[0,2\pi).
\end{equation*}
For the GPTs up to various orders, Figure \ref{Ex2-Result} illustrates reconstruction results for the starfish-shaped domain. The coefficient $a_m$ of this domain decays more slowly than that of the kite-shaped domain in Example \ref{Ex1}. To approximate the boundary of the starfish-shaped domain to a high level of accuracy,  one needs the GPTs of higher orders than in Example \ref{Ex1}. Interestingly, it turns out that, in comparison to the convex parts, recovering the concave part of the boundary is more difficult.
\end{example}

We treat two objects containing corners on their boundary in Example \ref{Ex3} and Example \ref{Ex4}.

\begin{figure}[h!]
	\captionsetup[subfloat]{margin=8pt,format=hang,singlelinecheck=false}
	\centering
	\subfloat[\label{Ex2-1}Ord$=2$,\newline $\lambda^{\mbox{rec}}\approx-5.0240$]{\includegraphics[width=0.24\textwidth]{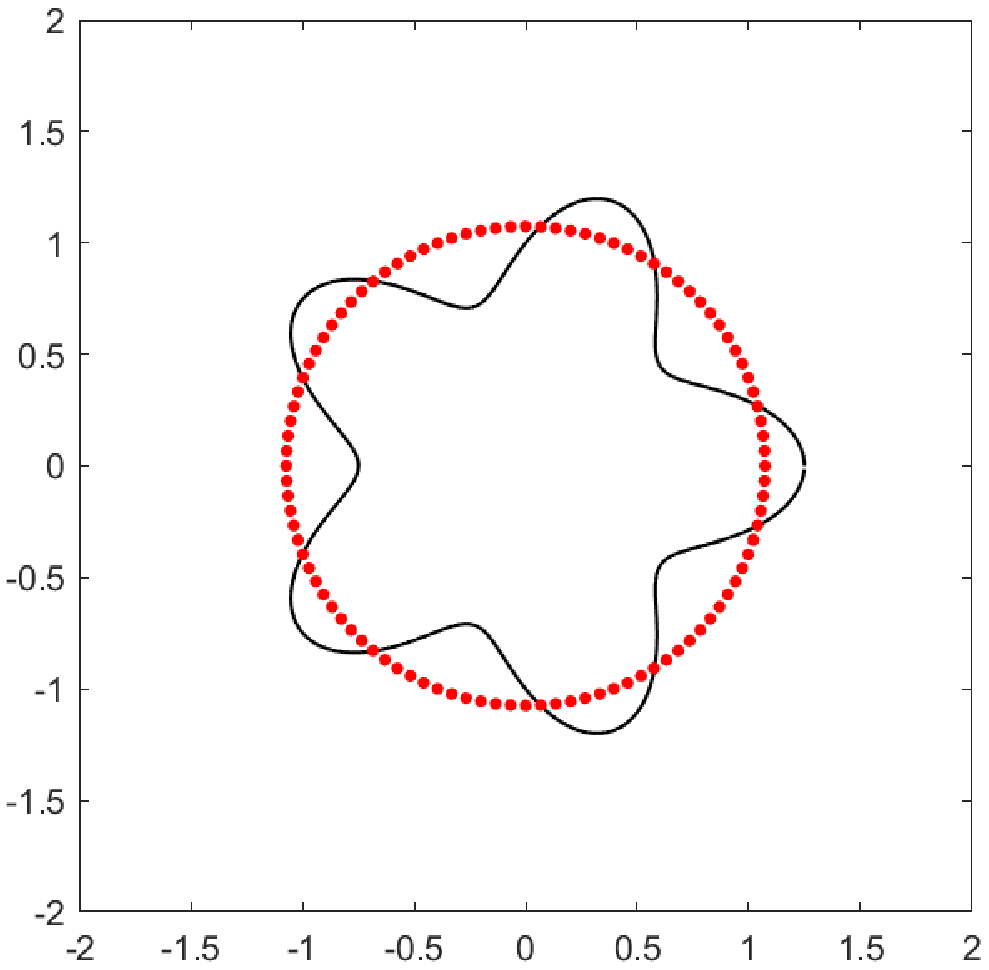}}
	\subfloat[\label{Ex2-2}Ord$=5$,\newline $\lambda^{\mbox{rec}}\approx-4.7352$]{\includegraphics[width=0.24\textwidth]{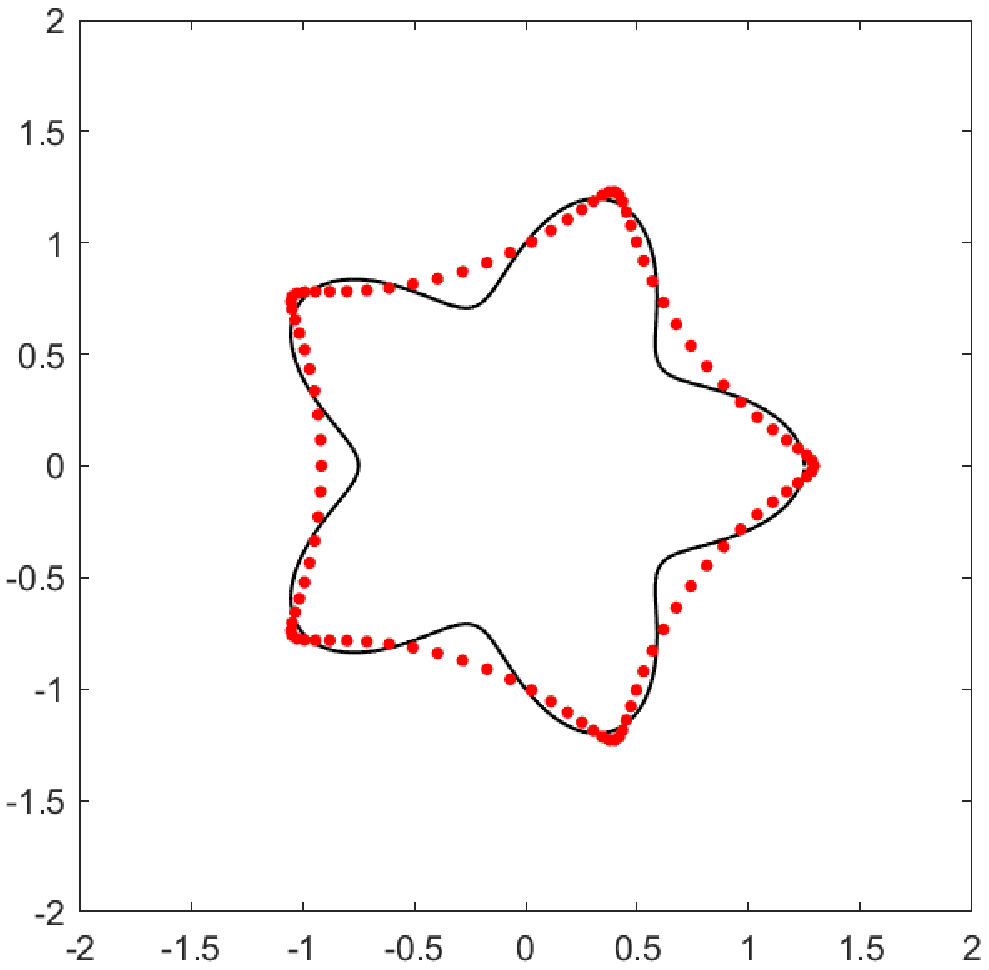}}
	\subfloat[\label{Ex2-3}Ord$=10$,\newline $\lambda^{\mbox{rec}}\approx-4.6327$]{\includegraphics[width=0.24\textwidth]{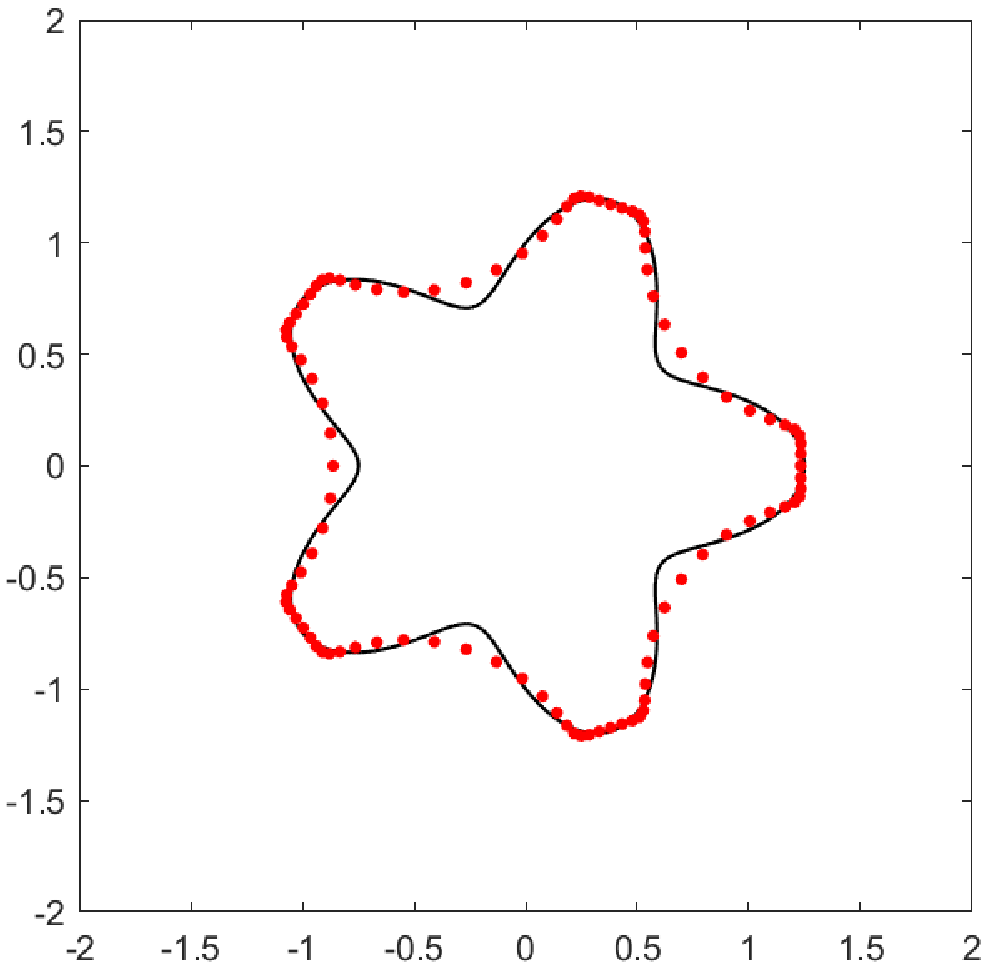}}
	\subfloat[\label{Ex2-4}Ord$=25$,\newline $\lambda^{\mbox{rec}}\approx-4.5458$]{\includegraphics[width=0.24\textwidth]{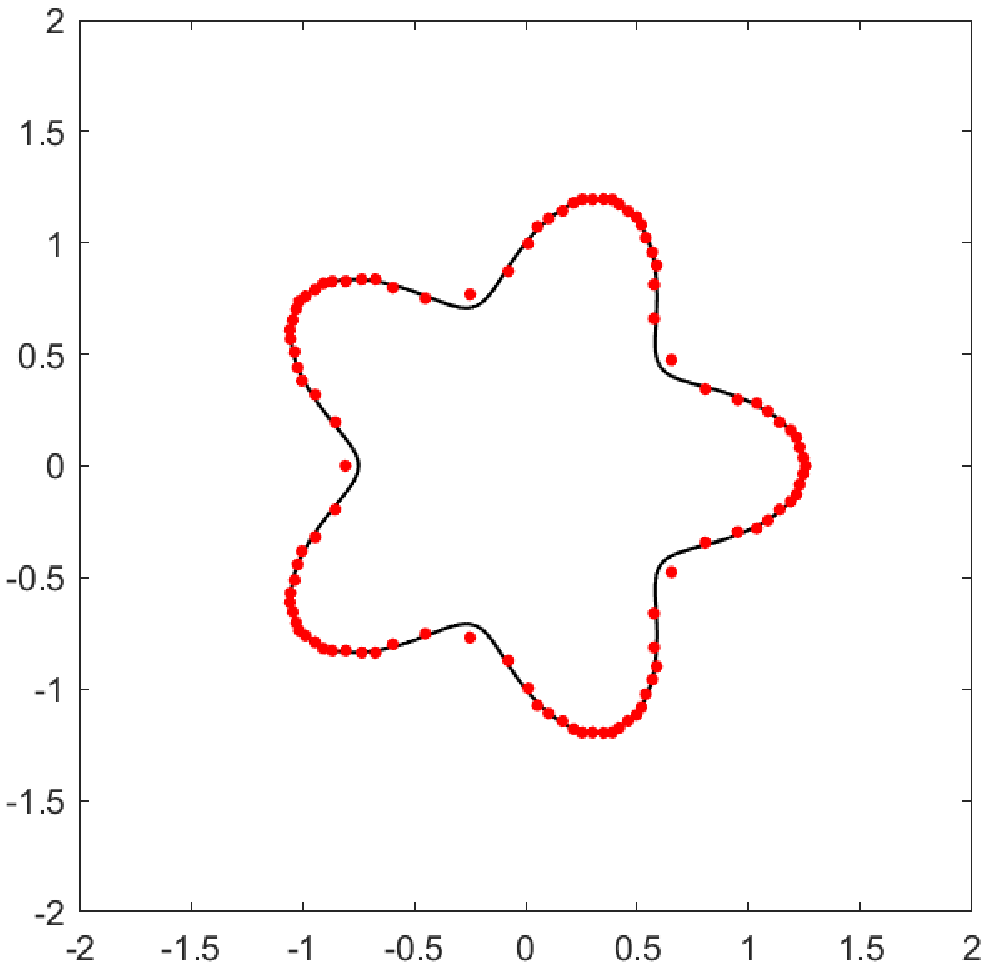}}
	\caption{Recovery of the starfish-shape domain with $\sigma= 0.8$ (i.e., $\lambda = -4.5$) from the GPTs of orders up to $\textrm{Ord}=2,5,10,25$. The coefficient $a_m$ of this domain decays in $m$ more slowly than that of the kite-shaped domain in Example \ref{Ex1}. To accurately retrieve the boundary of the domain, one needs the GPTs of higher orders than for the kite-shaped example.}
	\label{Ex2-Result}
\end{figure}

\newpage

\begin{example}[Cap-shaped domain]\label{Ex3} \rm
We consider the cap-shaped domain in Figure \ref{TargetGeometric}\,(c), which is generated by the boundary parameterization
\begin{align*}
x(t)=
\begin{cases}
\left(-\frac{1}{2}\sin\left(4\pi c t \right) - \frac{\sqrt{2}\pi}{4} , \, - \frac{1}{2} + \frac{1}{2}\cos\left(4\pi c t \right)\right),\quad &t\in[0,t_1),\\[2mm]
\left(2\pi c(t-t_2) - \sqrt{2}\arcsin\big( \frac{\sqrt{2}}{2}\cos(2\pi a) \big) , \, -\frac{1}{2} \right),\quad &t\in[t_1,t_2),\\[2mm]
\bigg(-\sqrt{2}\arcsin\left( \frac{\sqrt{2}}{2}\cos\left(2\pi c(t-t_2)+2\pi a\right) \right) , \, \\ \qquad\qquad\qquad- \arcsinh\left(\sin \left(2\pi c(t-t_2)+2\pi a\right)\right) \bigg), &t\in[t_2,1),
\end{cases}
\end{align*}
where
\begin{align*}
&a = \frac{1}{2} - \frac{1}{2\pi}\arcsin\left(\sinh\big(\frac{1}{2}\big)\right),~~
b = a-\frac{1}{4\pi}-\frac{\sqrt{2}}{8}+\frac{\sqrt{2}}{2\pi}\arcsin\left(\frac{\sqrt{2}}{2}\cos(2\pi a)\right),~~
c = \frac{9}{8}-b,\\
&t_1 = \frac{1}{8c} \approx 0.1122,\quad t_2 = t_1 + \left( \frac{a-b}{c} \right) \approx 0.4731.
\end{align*}
Figure \ref{Ex3-result} indicates that we can retrieve the conductivity $\sigma_c$ (or, $\lambda$) and shape of an inclusion even when the inclusion has corners on its boundary.  As in Example \ref{Ex2}, it is more difficult to perfectly recover the concave part of the boundary than the convex part via the proposed method.
\end{example}

\newpage

\begin{example}[Perturbed ellipse]\label{Ex4} \rm
We consider the perturbed ellipse in Figure \ref{TargetGeometric}\,(d), which has a tiny corner on its boundary and is generated by the following parameterization
\begin{align*}
x(t)=
\begin{cases}
\ds\left(\frac{a\cos(t_{0}) - c_{0}}{t_{0}}t+c_{0}, \, \frac{b\sin(t_{0})}{t_{0}}t\right),\quad&t\in[0,t_0),\\[2mm]
\ds\big(a\cos(t), \, b\sin(t)\big),\quad&t\in[t_{0},2\pi-t_0),\\[2mm]
\ds\left(\frac{a\cos(2\pi-t_{0})-c_{0}}{t_{0}}(2\pi-t)+c_{0}, \, \frac{b\sin(2\pi-t_{0})}{t_{0}}(2\pi-t)\right),\quad&t\in[2\pi-t_{0},2\pi),
\end{cases}
\end{align*}
where 
\begin{align*}
&a=1,\quad b=\frac{7}{3},\quad t_{0}=\sin^{-1}\left(\frac{1}{4b\sqrt{2}}\right)\approx0.0758,\quad
c_{0} = \sqrt{1-\frac{1}{32b^{2}}} + \frac{1}{4\sqrt{2}}\approx 1.1739.
\end{align*}
Figure \ref{Ex4-result} displays the reconstruction results with magnified images near the tiny corner. As shown in Figure \ref{Ex4-result}\,(d), the small corner can be recovered by the proposed method with high-order GPTs. We note that the corner is smoothened when $\mathrm{Ord}$ is not large enough; see Figure \ref{Ex4-result}\,(a--c).
\end{example}


\begin{figure}[t!]
	\captionsetup[subfloat]{margin=8pt,format=hang,singlelinecheck=false}
\centering
\subfloat[\label{Ex3-1}Ord$=2$,\newline $\lambda^{\mbox{rec}}\approx-1.5107$]{\includegraphics[width=0.24\textwidth]{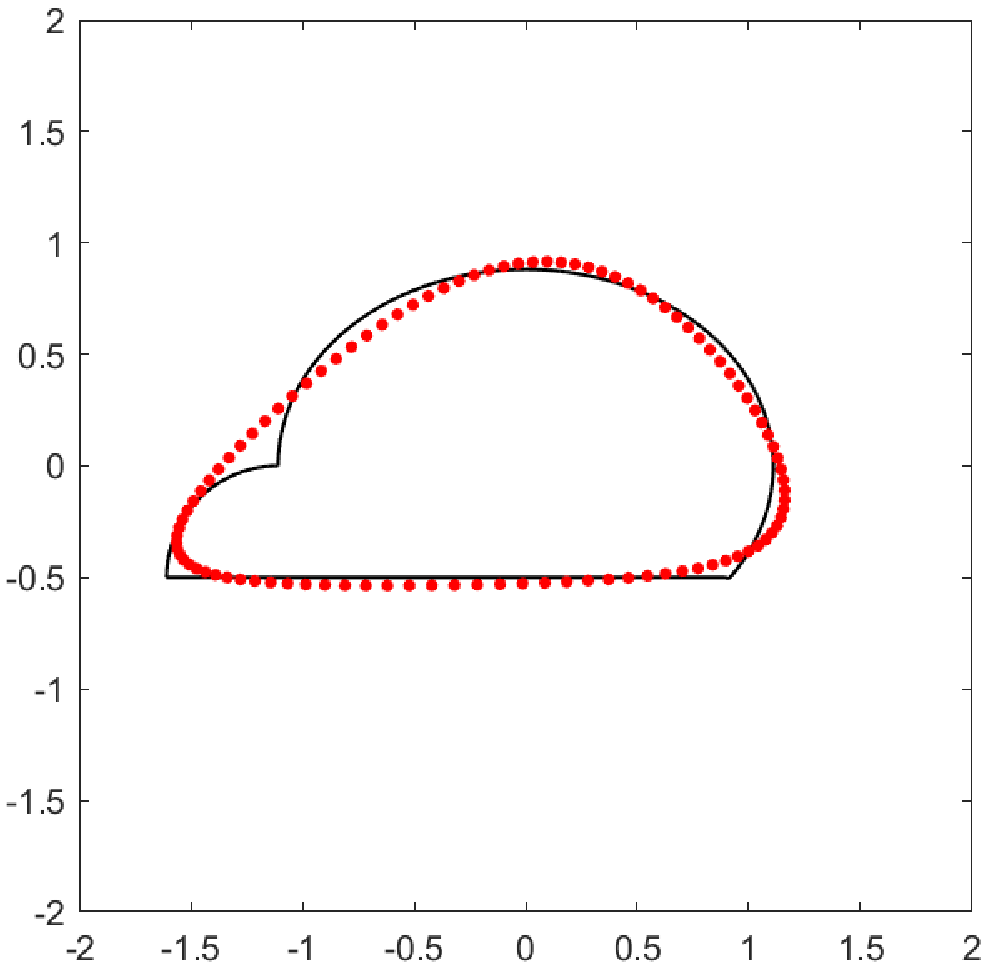}}
\subfloat[\label{Ex3-2}Ord$=5$,\newline $\lambda^{\mbox{rec}}\approx-1.5095$]{\includegraphics[width=0.24\textwidth]{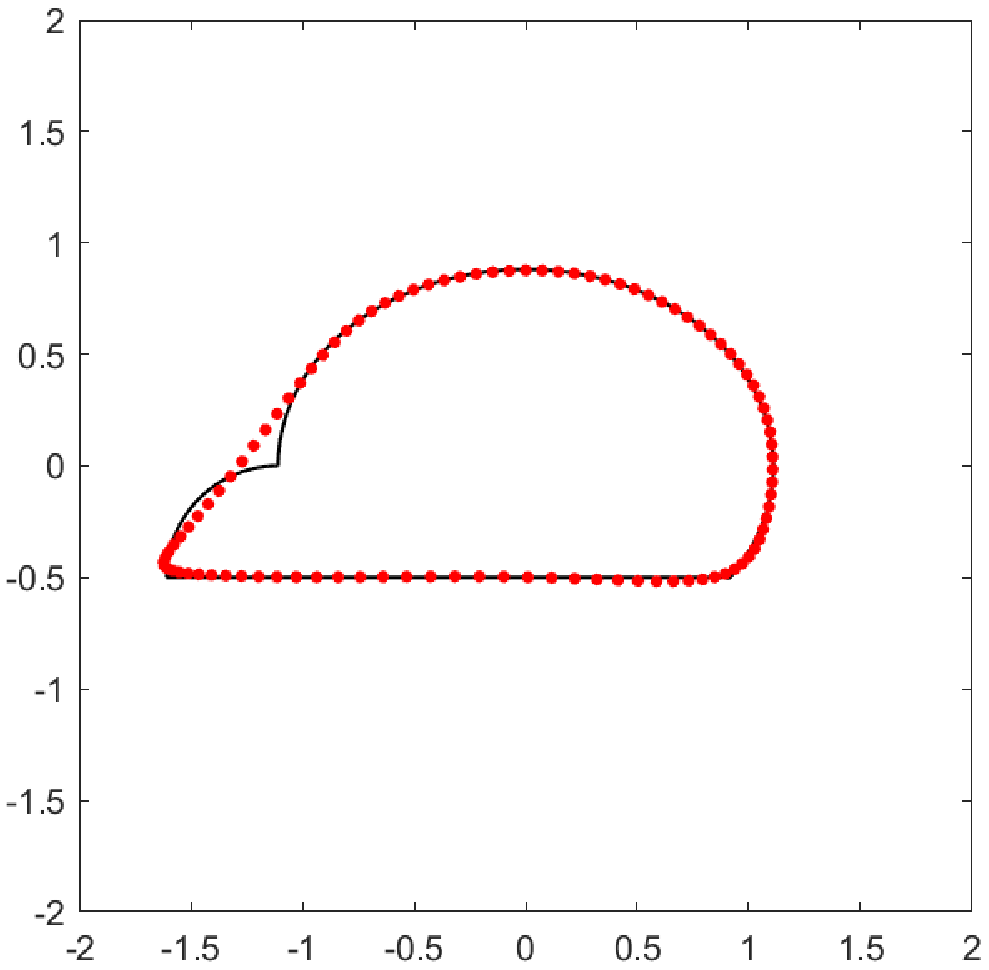}}
\subfloat[\label{Ex3-3}Ord$=10$,\newline $\lambda^{\mbox{rec}}\approx-1.5062$]{\includegraphics[width=0.24\textwidth]{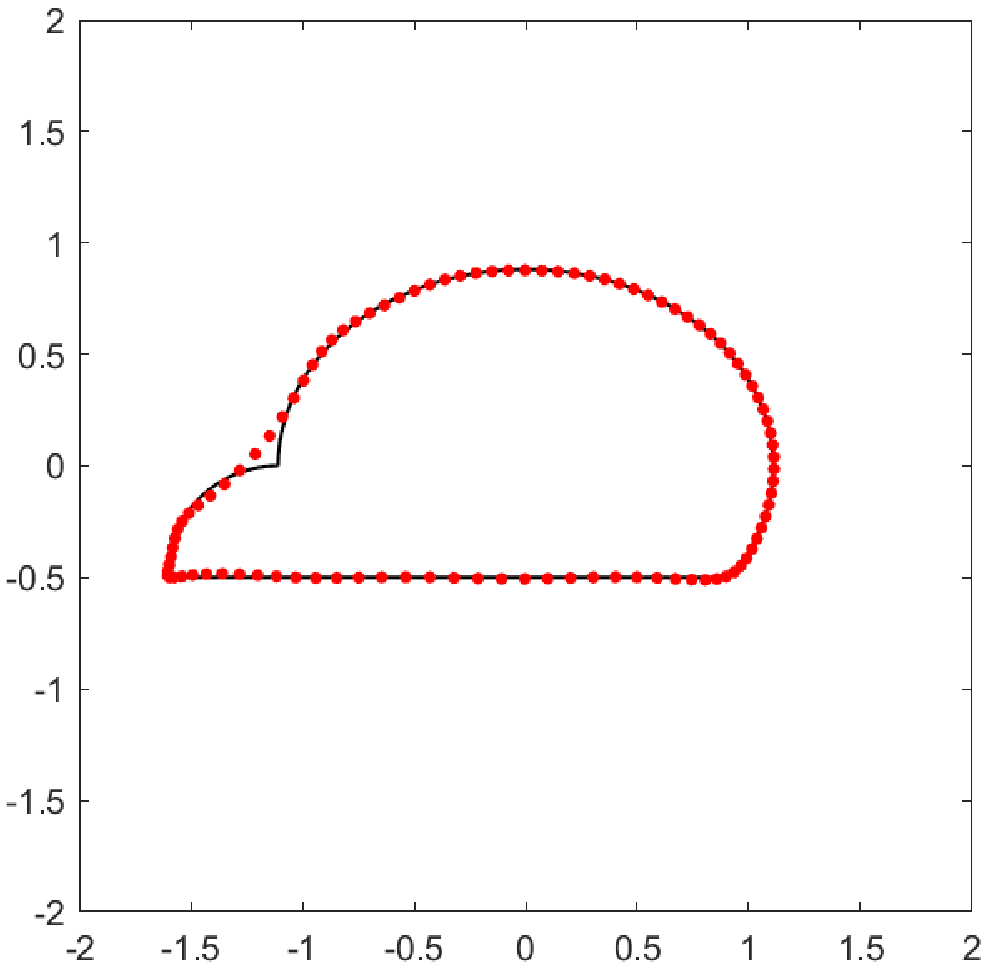}}
\subfloat[\label{Ex3-4}Ord$=20$,\newline $\lambda^{\mbox{rec}}\approx-1.5039$]{\includegraphics[width=0.24\textwidth]{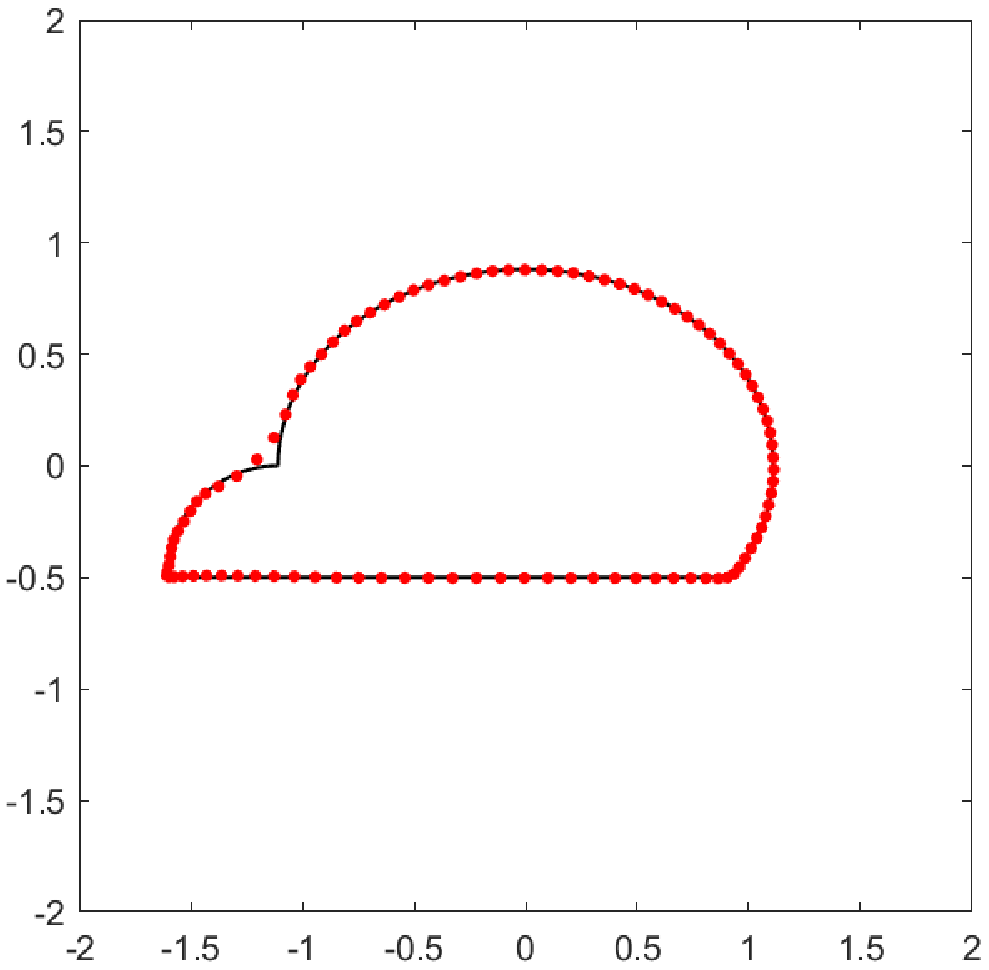}}
\caption{Recovery of the cap-shaped domain with $\sigma = 0.5$ (i.e., $\lambda = -1.5$) from the GPTs of orders up to $\textrm{Ord}=2,5,10,20$. The proposed reconstruction scheme works well for this inclusion with corners.}
\label{Ex3-result}
\end{figure}

\begin{figure}[t!]
	\captionsetup[subfloat]{margin=8pt,format=hang,singlelinecheck=false}
\centering
\subfloat[\label{Ex4-1}Ord$=2$,\newline $\lambda^{\mbox{rec}}\approx1.0028$]{\includegraphics[width=0.24\textwidth]{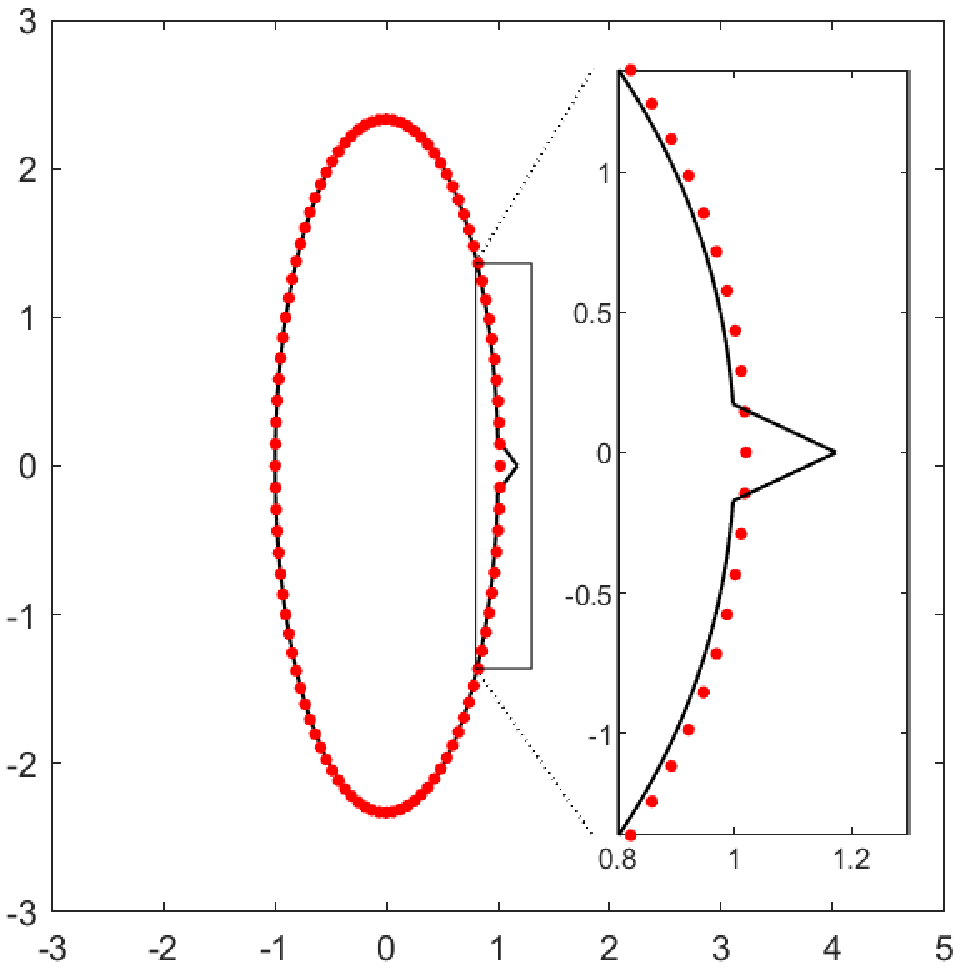}}
\subfloat[\label{Ex4-2}Ord$=10$,\newline $\lambda^{\mbox{rec}}\approx1.0019$]{\includegraphics[width=0.24\textwidth]{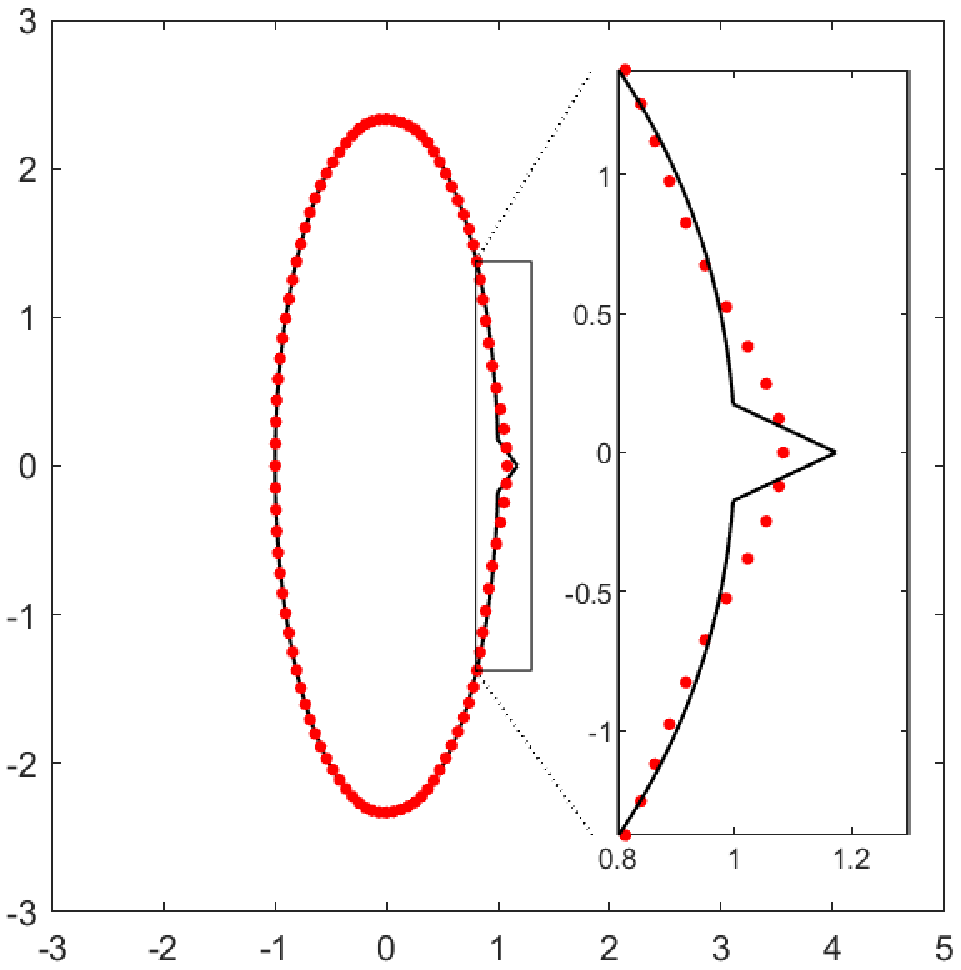}}
\subfloat[\label{Ex4-3}Ord$=15$,\newline $\lambda^{\mbox{rec}}\approx1.0012$]{\includegraphics[width=0.24\textwidth]{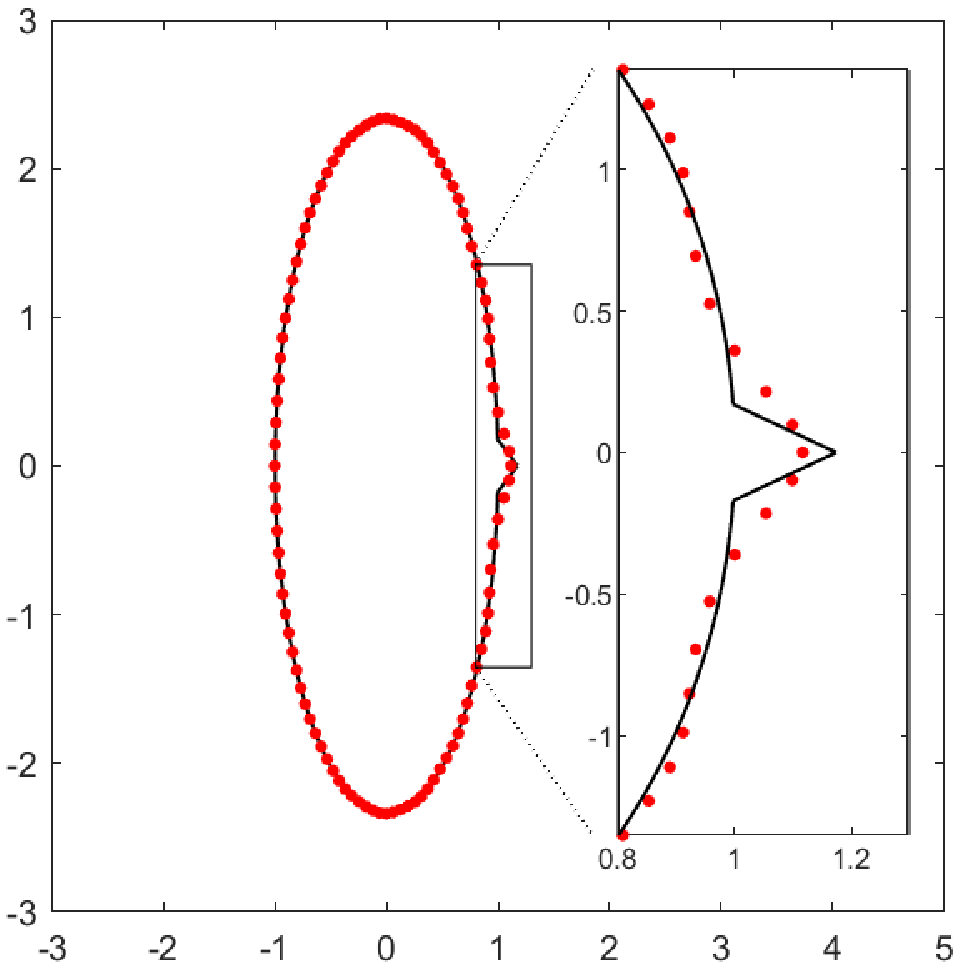}}
\subfloat[\label{Ex4-4}Ord$=25$,\newline $\lambda^{\mbox{rec}}\approx1.0003$]{\includegraphics[width=0.24\textwidth]{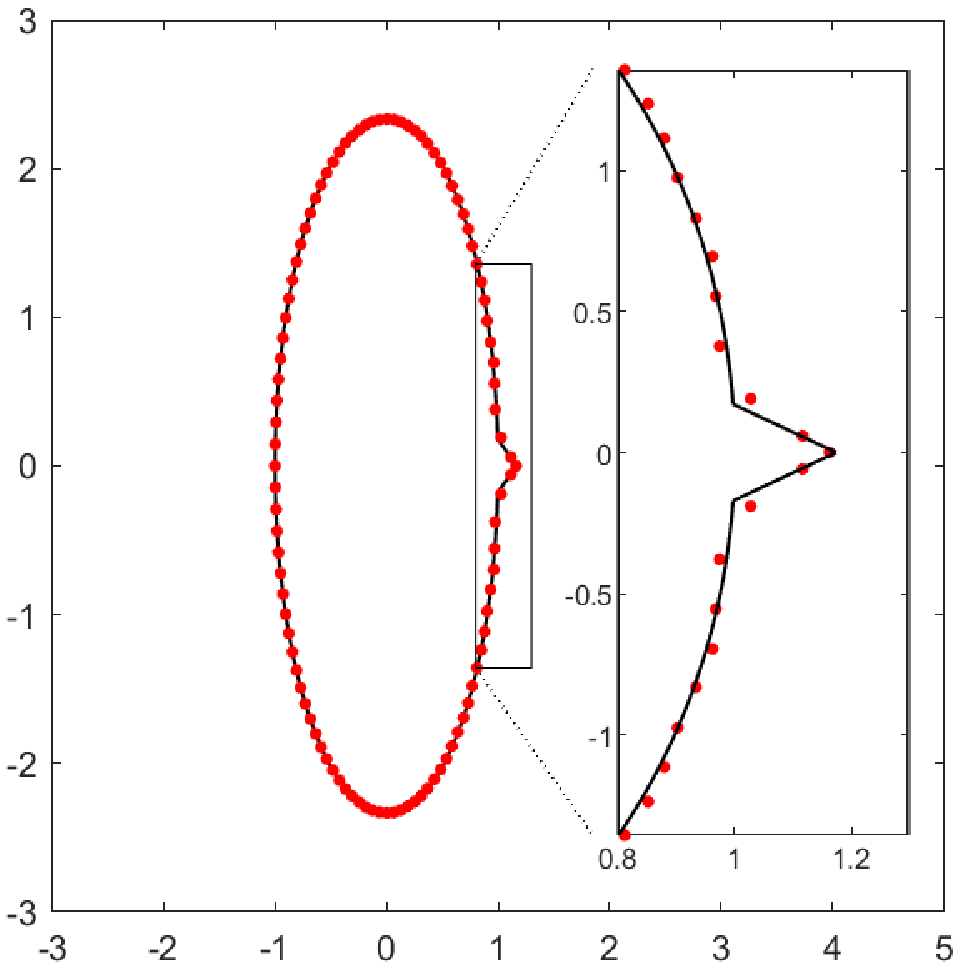}}
\caption{
Recovery of the perturbed ellipse with $\sigma = 3$ (i.e., $\lambda = 1$) from the GPTs of orders up to $\textrm{Ord}=2,10,15,25$. The image in a rectangle near the tiny corner is magnified. The proposed reconstruction scheme recovers the corner with $\textrm{Ord}=25$, while the corner is smoothened when $\textrm{Ord}$ is not large enough.}
\label{Ex4-result}
\end{figure}



\section{Conclusion}
We developed an analytical method of recovering a planar conductivity inclusion from exterior measurements, where the inclusion is assumed to be a simply connected domain and to be isotropic and homogeneous with arbitrary constant conductivity.  Based on the concept of the FPTs, we established matrix factorizations for the GPTs that hold for an inclusion with arbitrary constant conductivity,  where the inclusion is either a smooth domain or a star-shaped Lipschitz domain. The matrix factorizations lead us to an inversion formula for a conductivity inclusion. 
It would be of interest if the proposed inversion scheme--Theorems \ref{conformalrecovery} and \ref{thm:recovery:Lip}--could be extended to a general Lipschitz inclusion not assuming the star-shaped condition. We expect this generalization to be possible considering the shape monotonicity of the GPTs and the smooth approximation for a Lipschitz domain.

\end{document}